\newcommand{\seRightarrow}{\mathrel{\rotatebox[origin=c]{-45}{$\Rightarrow$}}}
\newtheorem{Theorem}{Theorem}[section]
\newtheorem{Lemma}[Theorem]{Lemma}
\theoremstyle{Definition}
\newtheorem{Definition}[Theorem]{Definition}
\newtheorem{Example}[Theorem]{Example}
\newtheorem{Proposition}[Theorem]{Proposition}
\newtheorem{Corollary}[Theorem]{Corollary}
\theoremstyle{Remark}
\newtheorem{Remark}[Theorem]{Remark}
\theoremstyle{notation}
\numberwithin{equation}{section}
 \def\3{\operatorname{3}}
\begin{document}

\title{Discrete homotopy and homology theories for finite posets}

\author{Jing-Wen Gao   }
\address{School of Mathematics and Statistics, Huazhong University of Science and Technology, Wuhan 430074, P.R.China}

\email{d202280005@hust.edu.cn (Jing-Wen Gao),\quad yangxs@hust.edu.cn (Xiao-Song Yang)}

\author{Xiao-Song Yang  }
\address{}
\email{ }

\thanks{ } 

\subjclass[2020]{06A07, 55N35, 55Q05}

\date{\today}


\keywords{finite poset, discrete homotopy theory, discrete cubical homology theory, Hurewicz map}

\begin{abstract}
This paper presents a discrete homotopy theory and a discrete homology theory for finite posets. In particular, the discrete and classical homotopy groups of finite posets are always isomorphic.
 Moreover, this discrete homology theory is related to the discrete homotopy theory through a discrete analogue of the Hurewicz map.
\end{abstract}
\maketitle
\section{Introduction}
The classical homotopy and homology theories for finite posets are based on a well-known correspondence given by McCord, which assigns to each finite poset $X$ the order complex $\mathcal{K}(X)$ together with a weak homotopy equivalence $|\mathcal{K}(X)|\rightarrow X$ {\rm{\cite[Theorem 1.4.6]{JAB2011}}}. This correspondence permits the study of homotopy and homology groups of a finite poset by means of its associated order complex. 

The classical homotopy-theoretic invariants for a finite poset involve only its topological structure, that is, the associated order complex. However,  
a salient  feature of a finite poset is that it is essentially a combinatorial object, its graphical representation (Hasse diagram) being a digraph. The combinatorial properties of posets have been applied to other areas, such as topological data analysis. For example, in {\rm{\cite{JFJ2019}}} Jardine utilizes poset theoretical ideas to study Vietoris-Rips complexes associated with metric spaces. Moreover, in light of the observation
  that every finite digraph can be viewed as the graphical representation (Hasse diagram) of a finite poset,
 the study of the combinatorics of posets can facilitate the understanding of the combinatorics of digraphs. The homotopy-theoretic invariants {\rm{\cite{HB2001, HB2005}}}, based on combinatorics of the latter, have found numerous applications, including in subspace arrangements {\rm{\cite{HB2011}}}, pattern recognition {\rm{\cite{VC2013}}} and topological data analysis {\rm{\cite{FM2025}}}. Hence,
 the motivation for the introduction of a new discrete homotopy theory for finite posets in this paper comes from a desire to find invariants that are sensitive to the combinatorics encoded in the posets.

The goal of this paper is to develop a discrete  homotopy theory and a discrete homology theory, which are discrete analogues of homotopy and homology theories, associating a 
sequence of groups to a finite poset, capturing its combinatorial structure, rather than its topological structure. Furthermore,  discrete homology theory  is related to discrete homotopy theory in the same way that classical homology is related to classical homotopy theory. Since every finite digraph can be viewed as the Hasse diagram of a finite poset, our theories provide much algebraic insight into problems regarding the combinatorics of finite digraphs.

We prove three main results. The first is motivated by a discrete homotopy theory for graphs introduced in {\rm{\cite{EB2006, HB2001, BL2019}}}. In similar fashion we define a discrete homotopy theory for finite posets. Our first result  asserts that, for all finite posets, the discrete homotopy theory and the classical one lead to isomorphic homotopy groups in all dimensions (Theorem \ref{homotopy groups}). This allows us to study the classical homotopy groups of compact polyhedra from a new viewpoint, the weak homotopy types of compact polyhedra being in one to one correspondence with the weak homotopy types of finite posets {\rm{\cite[Theorem 1.4.6, Theorem 1.4.12]{JAB2011}}}. 

In general, the classical homotopy groups of topological spaces are surprisingly difficult to compute. For example, covering spaces are used to calculate the fundamental group of a circle, one of the most foundational calculations in algebraic topology. To illustrate how the methods of calculating discrete homotopy groups of some topological spaces can  be simpler and more tractable than the ones used to get the classical homotopy groups, we give a new method for calculating the fundamental group of a circle  by means of this new homotopy theory (Example \ref{circle}).  

The motivation for the second result comes from a discrete cubical homology theory for graphs introduced in {\rm{\cite{HB2019, HB2014}}}. We define a similar discrete cubical homology theory for finite posets. Our second result  is that we exhibit for each $n\geq 1$ a natural homomorphism $\psi: H_n^{\rm Cube}(X)\rightarrow  H_n^{\rm Simpl}(\mathcal{K}(X))$, where $H_n^{\rm Cube}(X)$ is the $n$-th discrete cubical homology group, and a class of finite posets $X$ for which $\psi$ is surjective {\rm (Theorem \ref{Main1})}. We provide an example for which the homology groups of the two theories are not isomorphic at least in dimension $1$ (Example \ref{con}).
Theorem \ref{Main1} has an interesting application as follows. Let $M=|K|$ be a triangulated $m$-manifold with $K$ simplicially collapsible. 
Proposition \ref{apptom} tells when the manifold $M$ with two specific balls removed is not contractible.

The third result makes progress towards  the discrete analogue of the Hurewicz map relating discrete homology theory to discrete homotopy theory, motivated by the one defined  in {\rm{\cite{ BL2019}}}. Similarly, we give a notion of discrete Hurewicz map.
There is a discrete Hurewicz theorem in dimension 1 (Theorem \ref{di1}). Continuing our analogy with classical homotopy and homology theories, 
our third result  asserts that this discrete Hurewicz map coincides with the classical one (Theorem \ref{Hurewicz coincide}). 

The organization of the paper is as follows. In Section \ref{section2}, we give preliminary definitions and results concerning finite posets and simplicial complexes.
In Section \ref{section3}, we define a discrete homotopy theory for finite posets and show that the discrete and classical homotopy groups of finite posets agree in all dimensions.
In Section \ref{section4}, we define a discrete cubical homology theory for finite posets, and explore both the similarities and differences between the discrete cubical homology theory and the classical one.  In Section \ref{section5}, we define a discrete Hurewicz map  which is the same as the classical one.

\section{Preliminaries}\label{section2}
This section is devoted to introducing some definitions and properties of finite posets  and simplicial complexes that are needed in the sequel. We refer the reader to {\rm{\cite{ JAB2011, JFP1969, EGM2012,  CPR1972, ECZ1963}}} for further details or proofs.
\subsection{Finite posets}
A topological space is said to be Alexandroff if every intersection of open sets is
still an open set. If $X$ is an Alexandroff space, for each $x\in X,$ we denote by $U_x$
the intersection of all the open sets of $X$ containing $x$. This allows us to define a
reflexive and transitive relation on $X$ by declaring $x \leq y$ if $U_x\subseteq U_y.$ The relation is antisymmetric if and only if $X$ is $T_0$. On the other hand, if $X$ is a set endowed with a reflexive, transitive relation, then the sets $U_x = \{y\in X | y\leq x\}$, as a basis, determines an Alexandroff topology. The Alexandroff topology is $T_0$ if and only if the relation is antisymmetric, that is, a partial order. A set with a partial order is called a poset. It has been shown that Alexandroff $T_0$ spaces and posets are in bijective correspondence. A finite $T_0$ space is clearly an Alexandroff $T_0$ space, thus a finite $T_0$ space is equivalent to a finite poset.

A  useful way to represent posets is with their \emph{Hasse diagrams}. The Hasse diagram of a poset $X$ is a digraph whose vertices are the points of $X$ and whose edges are the ordered pairs $(x,y)$ such that $x<y$ and there exists no $z\in X$ such that $x<z<y$.

Given a  poset $X$ and an element $x\in X$, we denote $
U_x=\{y\in X~|~y\leq x\},~ \widehat{U}_x=\{y\in X~|~y<x\},$ 
$F_x=\{y\in X~|~x\leq y\},$ and $\widehat{F}_x=\{y\in X~|~x<y\}.$ Let $C_x=U_x\cup F_x$ and $\widehat C_x=\widehat U_x\cup \widehat F_x$.
A  totally ordered subset $C$ of $X$ is called a \emph{chain}. An $n$-chain of $X$ is a chain of cardinality  $n+1$.  

\begin{Definition}
\rm{A finite poset $X$ is \emph{homogeneous} of dimension $n$ if all maximal chains in $X$
 have cardinality  $n + 1$. A poset $X$ is \emph{graded} if $U_x$ is homogeneous for all $x\in X$. In this case, the \emph{degree} of $x$, denoted ${\rm deg}(x)$, is the dimension of $U_x$.
}
\end{Definition}

If $X$ is a poset, we define its \emph{order complex} $\mathcal{K}(X)$ as the ordered simplicial complex whose simplices are the nonempty chains of $X$. Conversely,
given a simplicial complex $K$,  we define the \emph{face poset} $\chi(K)$ of $K$ as the poset of simplices of $K$ ordered by inclusion.
\begin{Definition}
\rm{Let $X$ be a finite poset. Define the \emph{K-McCord  map} 
	$\mu_X : |\mathcal{K}(X)|\rightarrow X$ by $\mu_X(u)=\text{min}(\text{support}(u))$.
}
\end{Definition}

Let $f, g: X\rightarrow Y$ be two maps between finite posets. The following statements hold.
\begin{Proposition}\label{map}
\begin{itemize}
	\item [(1)] $f$  is continuous if and only if it is order-preserving.
	\item [(2)] $f$ is homotopic to $g$ if and only if there is a sequence $f_0, f_1,\cdots, f_n$ of order-preserving maps such that $f=f_0\leq f_1\geq f_2\leq\cdots f_n=g$.
\end{itemize}
\end{Proposition}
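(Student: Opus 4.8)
The plan is to work throughout with the Alexandroff description of a finite poset, in which the minimal open set containing a point $x$ is $U_x=\{y\mid y\le x\}$, so that the open subsets of $X$ are exactly its down-sets, and to treat the two parts in turn.

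Part (1) is routine and I would dispatch it first. For the forward implication, if $f$ is continuous and $x\le x'$ in $X$, then $f^{-1}(U_{f(x')})$ is an open set containing $x'$, hence it contains the minimal such set $U_{x'}$; since $x\in U_{x'}$ this gives $f(x)\in U_{f(x')}$, i.e. $f(x)\le f(x')$. Conversely, if $f$ is order-preserving, then for each basic open $U_y\subseteq Y$ the preimage $f^{-1}(U_y)$ is a down-set --- if $f(x)\le y$ and $x'\le x$ then $f(x')\le f(x)\le y$ --- hence open, and since the $U_y$ form a basis, $f$ is continuous.

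For part (2) the key reduction is the statement: \emph{if $f,g\colon X\to Y$ are order-preserving with $f(x)\le g(x)$ for every $x$, then $f\simeq g$}. I would prove this by writing down the explicit homotopy $H\colon X\times[0,1]\to Y$ with $H(x,t)=f(x)$ for $t<1$ and $H(x,1)=g(x)$. For a basic open $U_y\subseteq Y$ one computes $H^{-1}(U_y)=\bigl(f^{-1}(U_y)\times[0,1)\bigr)\cup\bigl(g^{-1}(U_y)\times\{1\}\bigr)$; the first summand is open in the product topology, and since $f(x)\le g(x)$ forces $g^{-1}(U_y)\subseteq f^{-1}(U_y)$, every point $(x,1)$ with $x\in g^{-1}(U_y)$ has the open neighbourhood $U_x\times(\tfrac12,1]\subseteq H^{-1}(U_y)$; hence $H$ is continuous, and $H_0=f$, $H_1=g$. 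Granting this, the ``if'' direction of (2) follows by concatenating the homotopies attached to the fence $f=f_0\le f_1\ge f_2\le\cdots f_n=g$.

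The ``only if'' direction is where the substantive work lies, and I expect the passage from a topological homotopy back to a combinatorial fence to be the main obstacle. Given $H\colon X\times[0,1]\to Y$ with $H_0=f$ and $H_1=g$, I would form the finite poset $Y^X$ of all order-preserving maps $X\to Y$ under the pointwise order and check that $\widetilde H\colon[0,1]\to Y^X$, $t\mapsto H(-,t)$, is continuous: for $h\in Y^X$ the preimage $\widetilde H^{-1}(U_h)$ is $\bigcap_{x\in X}\{t\mid (x,t)\in H^{-1}(U_{h(x)})\}$, a finite intersection of slices of open subsets of $X\times[0,1]$, hence open --- this is the one spot where finiteness of $X$ is genuinely used. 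So the connected image of $\widetilde H$ places $f$ and $g$ in a common connected component of $Y^X$. It then remains to invoke --- and, for self-containedness, to prove --- the standard fact that in a finite poset the connected components coincide with the ``fence components'': a set of points any two of which are linked by a zig-zag of comparabilities is at once a down-set and an up-set in $Y^X$, hence clopen, so two maps in one connected component are joined by a chain of order-preserving maps with consecutive terms comparable, which after inserting repetitions takes the alternating form $f_0\le f_1\ge f_2\le\cdots f_n=g$.
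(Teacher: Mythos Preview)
The paper does not actually prove this proposition; it is listed among the preliminaries with a blanket reference to \cite{JAB2011} and related sources for proofs. Your argument is correct and is essentially the standard one found there: part~(1) via the basic opens $U_y$, the ``if'' direction of~(2) via the explicit homotopy witnessing $f\le g\Rightarrow f\simeq g$, and the ``only if'' direction by passing to the finite mapping poset $Y^X$, using connectedness of $[0,1]$, and identifying connected components of a finite poset with fence components. The only point worth making explicit in your write-up is that each $H(-,t)$ is order-preserving (hence genuinely lands in $Y^X$), which follows from part~(1) applied to the restriction of $H$ to $X\times\{t\}$; otherwise nothing is missing.
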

Henceforth, all maps between posets that we deal with are assumed to be order-preserving.
\subsection{Simplicial complexes}
\begin{Definition}
\rm{ Let $K$ be a simplicial complex. A simplex $A\subseteq K $ is called a \emph{maximal simplex} if $A$ is not a proper face of any simplex of $K$. The face $B$ of $A$ is called a \emph{free face} of $A$ in $K$ if $B$ is the proper face of no other simplex of $K$.

$K$ is a \emph{homogeneous $k$-complex} if every maximal simplex of $K$ is $k$-dimensional.}
\end{Definition}

\begin{Definition}
\rm{Let $L\subseteq K$ be simplicial complexes. We say there is an \emph{elementary collapse} from $K$ to $L$ if $K-L$ consists of a maximal simplex $A$ of $K$ together with a free face, and write $K\seRightarrow L$.

We say $K$ \emph{collapses} to $L$, written $K\searrow L$, if there is a finite sequence of elementary collapses $K=K_0\seRightarrow K_1\seRightarrow\cdots\seRightarrow K_n=L$. If $L$ is a point, we call $K$ \emph{ collapsible} and write $K\searrow 0$.}
\end{Definition}

\begin{Definition}
\rm{	Suppose $Q\subseteq P$ are polyhedra and that $P=Q\cup B^n$ and $Q\cap B^n=$ a face $B^{n-1}$, where $B^i$ denotes a piecewise-linear (PL) $i$-ball. Then we say that there is an \emph{elementary collapse} of $P$ onto $Q$, and write $P\seRightarrow Q$. The collapse is across $B^n$ onto $B^{n-1}$ from the complementary face ${\rm cl}(\partial B^n-B^{n-1})$.
	
	We say $P$ \emph{collapses} to $Q$, written $P\searrow Q$, if there is a finite sequence of elementary collapses $P=P_0\seRightarrow P_1\seRightarrow\cdots\seRightarrow P_n=Q$. If $Q$ is a point, we call $P$ \emph{collapsible} and write $P\searrow0$.}
\end{Definition}

It is true that $K\searrow L$ implies $|K|\searrow|L|$, but the converse is unknown.

\section{Discrete homotopy theory for finite posets}\label{section3}
In this section we define  a discrete homotopy theory for finite posets, motivated by the notion of a discrete homotopy theory for graphs introduced in {\rm{\cite{HB2001, BL2019}}}.
As we shall see in Theorem \ref{homotopy groups}, the discrete and classical homotopy groups of finite posets agree in all dimensions.

Let $\mathbb{Z}$ denote the digraph whose vertices are  integers and whose edges are the ordered pairs $(2i, 2i+1)$ and $(2j, 2j-1)$.
Given an integer $p\geq 0$, write $I_p$ for the subgraph of $\mathbb{Z}$ induced by $\{0,\cdots, p\}$. 

Two  maps $f, g: X\rightarrow Y$ between finite posets are \emph{homotopic} if for some $p$, there is a map $H: X\times I_p\rightarrow Y$ such that $H(x, 0)=f(x)$ and $H(x, p)=g(x)$. The map $H$ is a \emph{homotopy} from $f$ to $g$. Write $H_i(x)=H(x, i)$.
This definition in fact coincides with Proposition \ref{map}.

Let $\mathbb{Z}^n$ be the $n$-fold Cartesian product of $\mathbb{Z}$ whose vertices are $n$-tuples of integers.
For any interval $J\subseteq\mathbb{R}$,  $\mathbb{Z}^n_J$ denotes the subgraph  consisting of all vertices $x=(x_1,\cdots, x_n)\in\mathbb{Z}^n$ with $x_i\in J$. Let $I^n_{p}$ be the $n$-fold Cartesian product of $I_{p}$.
Denote by  $\partial I^n_{p}$ the subgraph of $I^n_{p}$ consisting of all vertices $x=(x_1,\cdots, x_n)\in I^n_{p}$ with $x_i=p$ or $0$ for some $i$.

By abuse of notation, we will write 
$$f: (\mathbb{Z}^n, \partial\mathbb{Z}^n)\rightarrow (X, x_0)$$
if there exists an integer $r\geq 0$  such that $f$ is a map $f: (\mathbb{Z}^n, \mathbb{Z}_{[r, +\infty]}^n)\rightarrow (X, x_0)$. The minimum of such $r$ is called the \emph{radius} of $f$. 
\begin{figure}[htbp]
	\centering
	\includegraphics[scale=0.3]{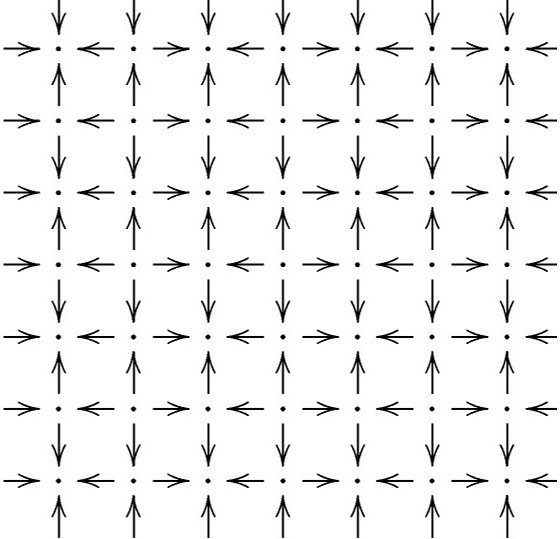}
	\caption{The Hasse diagram of $\mathbb{Z}^2$.}
	\label{figure}
\end{figure}

For a finite poset $X$ with basepoint $x_0\in X$, two   maps $f, g: (\mathbb{Z}^n, \partial\mathbb{Z}^n)\rightarrow (X, x_0)$ are  homotopic if there is a \emph{basepoint-preserving homotopy} $H: (\mathbb{Z}^n, \partial\mathbb{Z}^n)\times I_p\rightarrow X$ from $f$ to $g$ such that $H_i$ is a map  $(\mathbb{Z}^n, \partial\mathbb{Z}^n)\rightarrow (X, x_0)$ for all $i$. The basepoint-preserving homotopy class of $f$ is denoted $[f]$.

\begin{Definition}{\rm
		For a finite poset $X$ with basepoint $x_0\in X$, define $\pi_n^{D}(X, x_0)$ to be the set of basepoint-preserving-homotopy classes of  maps $f: (\mathbb{Z}^n, \partial\mathbb{Z}^n)\rightarrow (X, x_0)$. We call $\pi_n^{D}(X, x_0)$ the \emph{$n$-th discrete homotopy group} of $X$.
	} 
\end{Definition}

We can endow $\pi^{D}_n(X, x_0)$ with a group structure as follows. If $f$ and $g$ are maps $(\mathbb{Z}^n, \partial\mathbb{Z}^n)\rightarrow (X, x_0)$ of radii $r_f$ and $r_g$ respectively, then the product $f\cdot g$ is defined by
\begin{equation*}
(f\cdot g)(x_1,\cdots, x_n)=\begin{cases}
f(x_1,\cdots, x_n)~&\text{if}~x_1\leq r_f,\\
g(x_1-(r_f+r_g), x_2,\cdots, x_n)~&\text{if}~x_1>r_f.
\end{cases}
\end{equation*}
The identity element is the constant map sending $\mathbb{Z}^n$ to $x_0$.

Similar arguments as for discrete homotopy groups of graphs in {\rm{\cite{HB2001}}} show that the definition of $\pi^{D}_n(X, x_0)$ is independent of the basepoint $x_0$ when $X$ is connected, and that  $\pi^{D}_n(X, x_0)$ is a group under the product defined above. Furthermore, the idea of the proof of {\rm{\cite[Proposition 3.6]{HB2001}}} can be used to show that $\pi^{D}_n(X, x_0)$ is abelian for $n\geq 2$.

Before we construct a homomorphism from the discrete $n$-th homotopy group $\pi^{D}_n(X, x_0)$ to the classical $n$-th homotopy group $\pi_n(|\mathcal{K}(X)|, x_0)$, we make a few preliminary observations. 

Let $L^n$ be the $n$-dimensional  cube, the product of $n$ copies of the interval $[-1, 1]$.
For each $i$-th copy of $L^n$, consider a partition
\begin{equation}\label{e1}
-1=t_{-{j_i}^{\prime}}<\cdots<t_{-1}<0=t_0<t_{1}<\cdots<t_{j_i}=1,
\end{equation}
where ${j_i}^{\prime}$ and $j_i$ are the numbers of partition points between $[-1, 0)$ and $(0, 1]$, respectively.
Then we define a partition $T$ of $L_n$ by
\begin{equation}\label{pp1}
T=\bigcup_{\substack{t_{-{j_i}^{\prime}}\leq k\leq t_{j_i}\\ 1\leq i\leq n}}t_k.
\end{equation} With this partition $T$,
$L^n$ can be identified with the geometric realization of a subgraph $\gamma_T$ of $\mathbb{Z}^n$,
\begin{equation*}
\gamma_T=\prod_{i=1}^n\mathbb{Z}_{[-{j_i}^{\prime}, j_i]}.
\end{equation*}
For each vertex $(x_1, x_2,\cdots, x_n)$ of $\gamma_T$, taking it to the point $(t_{x_1}, t_{x_2},\cdots, t_{x_n})$ of $L^n$
induces a canonical homeomorphism
\begin{align*}
\lambda_T: |\mathcal{K}(\gamma_T)|&\longrightarrow L^n.
\end{align*}
When the partition $T$ takes the form
$$-1=\frac{-r}{r}<\frac{-r+1}{r}<\cdots<\frac{-1}{r}<0=t_0<\frac{1}{r}\cdots<\frac{r-1}{r}<\frac{r}{r}=1$$
for each $i$, where $r>0$ is an integer, we will write $T$, $\gamma_{T}$ and $\lambda_{T}$ as $T_r$, $\gamma_{r}$ and $\lambda_r$, respectively.

\begin{Definition}{\rm Let $X$ be a connected finite poset.
		Define $\theta: \pi_n^D(X, x_0)\rightarrow \pi_n(|\mathcal{K}(X)|, x_0)$ by the equation
		$$\theta([f])=[|\mathcal{K}(f)|\circ\lambda_{r_f}^{-1}],$$
		where $r_f$ is the radius of the map $f$.}
\end{Definition}
It is straightforward to check that $\theta$ is  well-defined and is a
 homomorphism.
The following is one of the main theorems of this paper.
\begin{Theorem}\label{homotopy groups}
	The homomorphism $\theta$ is an isomorphism.
\end{Theorem}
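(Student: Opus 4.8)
The plan is to deduce that $\theta$ is an isomorphism from McCord's theorem together with the simplicial approximation theorem, after first rewriting $\theta$ in a more tractable form. I expect the only genuine difficulty to be a ``finite space'' version of simplicial approximation needed to pass from simplicial maps to order-preserving maps. \emph{Reduction.} The $K$-McCord map is natural: for an order-preserving $h\colon Z\to Y$ one has $\mu_Y\circ|\mathcal{K}(h)|=h\circ\mu_Z$, since on a point whose support is a chain $C$ both sides equal $h(\min C)=\min h(C)$. As $\mu_X\colon|\mathcal{K}(X)|\to X$ is a weak homotopy equivalence (McCord) and $L^n$ is a CW complex, $(\mu_X)_\ast\colon\pi_n(|\mathcal{K}(X)|,x_0)\to\pi_n(X,x_0)$ is an isomorphism onto the homotopy group of the Alexandroff space $X$; hence it suffices to show that $\Theta:=(\mu_X)_\ast\circ\theta$ is an isomorphism. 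By naturality and the definition of $\theta$, $\Theta([f])$ is the homotopy class of the composite $(L^n,\partial L^n)\xrightarrow{\,m_{r_f}\,}(\gamma_{r_f},\partial\gamma_{r_f})\xrightarrow{\,f\,}(X,x_0)$, where $m_r:=\mu_{\gamma_r}\circ\lambda_r^{-1}$ is a fixed weak homotopy equivalence carrying $\partial L^n$ to $\partial\gamma_r$.

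\emph{Surjectivity of $\Theta$.} Represent a class in $\pi_n(X,x_0)$ by $\alpha\colon(L^n,\partial L^n)\to(X,x_0)$. Since $\mu_X$ is a weak equivalence and $S^n=L^n/\partial L^n$ is a CW complex, the based map $\alpha$ lifts up to based homotopy to $\tilde\alpha\colon(S^n,\ast)\to(|\mathcal{K}(X)|,x_0)$ with $\mu_X\circ\tilde\alpha\simeq\alpha$. The triangulations $\lambda_r\colon|\mathcal{K}(\gamma_r)|\xrightarrow{\ \cong\ }L^n$ have mesh $O(1/r)$, so the simplicial approximation theorem, applied rel $\partial L^n$ (where $\tilde\alpha$ is already simplicial, being constant at $x_0$), yields for $r$ large a simplicial map $\sigma\colon\mathcal{K}(\gamma_r)\to\mathcal{K}(X)$ with $\sigma|_{\partial}=x_0$ and $|\sigma|\circ\lambda_r^{-1}\simeq\tilde\alpha$ rel $\partial$. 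It remains to replace $\sigma$ by an order-preserving map $f\colon(\gamma_{r'},\partial\gamma_{r'})\to(X,x_0)$ (allowing a further cubical refinement to some $r'$) whose realization is homotopic to $|\sigma|$; granting this, $f\circ m_{r'}=\mu_X\circ|\mathcal{K}(f)|\circ\lambda_{r'}^{-1}\simeq\alpha$, so $\Theta([f])$ is the given class. This promotion step is the finite space analogue of simplicial approximation (cf.\ \cite{JAB2011}): passing to the barycentric subdivision and using a standard construction (e.g.\ sending a simplex $s$ to $\max\sigma(s)$) gives an order-preserving map on the face poset whose realization is a simplicial approximation of $|\sigma|$, after which one identifies that face poset, via one more simplicial approximation, with a $\gamma_{r'}$.

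\emph{Injectivity of $\Theta$.} Since $\Theta$ is a group homomorphism it suffices to show $\ker\Theta=0$. If $\Theta([f])=0$, choose a null-homotopy of $f\circ m_{r_f}$ rel $\partial L^n$ and run the same argument one dimension higher on $L^n\times[0,1]$: approximate the null-homotopy rel the bottom face $L^n\times\{0\}$ (where it already equals $|\mathcal{K}(f)|$ up to $\lambda_{r_f}$) and rel $L^n\times\{1\}$ and $\partial L^n\times[0,1]$ (where it is constant at $x_0$), using the triangulations $\mathcal{K}(\gamma_{r'}\times I_p)$ of $L^n\times[0,1]$ (recall $|\mathcal{K}(P\times Q)|\cong|\mathcal{K}(P)|\times|\mathcal{K}(Q)|$). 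The resulting order-preserving map $\gamma_{r'}\times I_p\to X$, extended by $x_0$ outside $\gamma_{r'}$, is precisely a basepoint-preserving discrete homotopy from a stabilization of $f$ to the constant map, so $[f]=0$ in $\pi_n^D(X,x_0)$. Hence $\Theta$, and therefore $\theta$, is injective; combined with surjectivity this proves the theorem.

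\emph{Main obstacle.} The crux is the passage from simplicial to order-preserving maps: a simplicial map between order complexes need not be order-preserving on vertices, so one cannot read a discrete map directly off a simplicial approximation. Handling this forces one through the finite space version of simplicial approximation (barycentric subdivision, the $\max$ construction, naturality of $\mu$), together with the bookkeeping needed to keep every domain triangulation of the prescribed ``product of fences'' form $\gamma_{r'}$ (resp.\ $\gamma_{r'}\times I_p$) and to reconcile the various radii and stabilizations. Everything else is the standard interplay between McCord's theorem and simplicial approximation.
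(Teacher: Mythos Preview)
Your overall architecture is the same as the paper's---reduce via McCord, approximate simplicially, promote to an order-preserving map, and run the argument one dimension higher for injectivity---and you have correctly isolated the crux. But the promotion step, as you describe it, has a genuine gap.

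The $\max$ construction does give an order-preserving map $\chi(\mathcal{K}(\gamma_r))\to X$ whose realization is homotopic to $|\sigma|$. The problem is the domain: for $n\ge 2$ the face poset $\chi(\mathcal{K}(\gamma_r))$ is \emph{not} isomorphic to any $\gamma_{r'}$ (already for $n=2$, $r=1$ it has $33$ elements, while $\gamma_{r'}^2$ has $(2r'+1)^2$). Your proposed remedy, ``one more simplicial approximation'' to identify $\chi(\mathcal{K}(\gamma_r))$ with a $\gamma_{r'}$, is circular: simplicial approximation hands you a simplicial map, not an order-preserving one, which is exactly what you are trying to manufacture. Nor does composing with the obvious inclusion $\gamma_r\hookrightarrow\chi(\mathcal{K}(\gamma_r))$, $v\mapsto\{v\}$, help: that composite is just $\sigma$ on vertices, which need not be order-preserving. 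So the bookkeeping you flag as routine is in fact where the content lies.

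The paper avoids this domain change entirely. It defines $g$ directly on $\gamma_{r_1}$ by $g(w)=\mu_X\bigl(|f'|(\lambda_{r_1}(w))\bigr)$ and proves order-preservation by factoring through an auxiliary quotient space $J^{f'}_{T_{r_1}}(|\mathcal{K}(X)|)$ (Lemmas~\ref{continuu} and~\ref{continu}): continuity of the two factors forces the composite $\gamma_{r_1}\to X$ to be continuous, hence order-preserving. A second point you pass over is that even after $g$ is in hand, one must still show $[\,|\mathcal{K}(g)|\circ\lambda_{r_g}^{-1}\,]=[f]$; because $r_g$ need not equal $r_1$, the paper does this by an interpolation through a chain of partitions (the maps $F_k$, $S$, $S_i$), comparing neighbors pointwise in the order on $X$. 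Neither of these steps is standard simplicial-approximation bookkeeping.
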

There is a certain amount of technical machinery to be set up in order to prove this result. Given a partition $T$ of $L^n$ defined as (\ref{pp1}), write $L^n$ as $\lambda_T|\mathcal{K}(\gamma_T)|$.
To each continuous map $f: \lambda_T|\mathcal{K}(\gamma_T)|\rightarrow |\mathcal{K}(X)|$ which takes every simplex of $\lambda_T|\mathcal{K}(\gamma_T)|$ into some simplex of $|\mathcal{K}(X)|$, we associate a quotient space $$J^f_T(|\mathcal{K}(X)|)$$ of $|\mathcal{K}(X)|$ by the following procedure. Consider two vertices $$b=\lambda_T(a),~b^{\prime}=\lambda_T(a^{\prime}) $$ in $\lambda_T|\mathcal{K}(\gamma_T)|$ with $a<a^{\prime}$. Then there is a simplex in $|\mathcal{K}(X)|$ containing $f(b)$ and $f(b^{\prime})$, denoted $\sigma$. Let $l_{f(b)f(b^{\prime})}$ be a simple curve in $\sigma$ connecting $f(b)$ and $f(b^{\prime})$, which is disjoint from the images of all the other vertices of $\lambda_T|\mathcal{K}(\gamma_T)|$ under $f$. Furthermore, $l_{f(b)f(b^{\prime})}$ can be chosen to satisfy that for every point $x$ in $l_{f(b)f(b^{\prime})}-\{f(b), f(b^{\prime})\}$,
\begin{align}\label{lc}
\text{support}(x)=\begin{cases}
\text{support}(f(b)), ~\text{if}~\mu_X(f(b))\leq \mu_X(f(b^{\prime}))\\
\text{support}(f(b^{\prime})),~\text{if}~\mu_X(f(b^{\prime}))\leq \mu_X(f(b)).
\end{cases}
\end{align}

 Let us form the quotient space $J^f_T(|\mathcal{K}(X)|)$ under the identifications 
\begin{align}\label{lcc}
x\sim\begin{cases}
f(b), ~\text{if}~\text{support}(x)= \text{support}(f(b)),\\
f(b^{\prime}),~\text{if}~\text{support}(x)= \text{support}(f(b^{\prime})),
\end{cases}
\end{align}
 where $x\in l_{f(b)f(b^{\prime})}-\{f(b), f(b^{\prime})\}$, and $b, b^{\prime}$ range over all the pairs which are the images under $\lambda_{T}$ of the comparable pairs in $\gamma_{T}$.
It is easy to see that by construction $\mu_X$ is well-defined on the quotient space $J^f_T(|\mathcal{K}(X)|)$. We write $\widetilde \mu_X$ for this induced map.

Before proceeding further we need some technical lemmas about the above constructions. 
\begin{Lemma}\label{continuu}
$\widetilde \mu_X: J^f_T(|\mathcal{K}(X)|)\rightarrow X$ is continuous.
\end{Lemma}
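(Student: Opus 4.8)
The plan is to reduce continuity of $\widetilde\mu_X$ to continuity of $\mu_X\colon|\mathcal{K}(X)|\to X$, which is the $K$-McCord map and is already known to be continuous (indeed a weak homotopy equivalence). Recall that the quotient space $J^f_T(|\mathcal{K}(X)|)$ is obtained from $|\mathcal{K}(X)|$ by collapsing, for each comparable pair $a<a'$ in $\gamma_T$, the open arc $l_{f(b)f(b')}-\{f(b),f(b')\}$ onto one of its two endpoints according to which endpoint has the smaller value under $\mu_X$; crucially, by the defining property (\ref{lc}) every interior point $x$ of such an arc has $\mathrm{support}(x)=\mathrm{support}(f(b))$ or $\mathrm{support}(x)=\mathrm{support}(f(b'))$, i.e. exactly the support of the endpoint it is being identified with. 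Consequently $\mu_X$ is constant on each equivalence class of the identification $\sim$, so $\mu_X$ factors through the quotient map $q\colon|\mathcal{K}(X)|\to J^f_T(|\mathcal{K}(X)|)$ as $\mu_X=\widetilde\mu_X\circ q$; this is precisely the remark made just before the lemma that $\widetilde\mu_X$ is well-defined.

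The first step I would carry out is to verify carefully that $\sim$ really does generate an equivalence relation whose classes are unions of points sharing a common support with a designated endpoint, so that there is no ambiguity: distinct collapsing arcs may a priori overlap or chain together, but since each arc is collapsed to a point of $|\mathcal{K}(X)|$ lying in the image of $f$ (a vertex value $f(b)$), and $\mu_X$ is already seen to be constant on every arc-with-endpoint, any class on which identifications have been iterated still carries a single $\mu_X$-value. The second step is the factorization $\mu_X = \widetilde\mu_X \circ q$ together with the fact that $q$ is a quotient map by construction. The third step is the standard universal property of quotient topology: a map out of a quotient space is continuous if and only if its composition with the quotient map is continuous. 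Since $\mu_X = \widetilde\mu_X\circ q$ is continuous (as $\mu_X$ is continuous, by McCord's theorem quoted in the preliminaries), continuity of $\widetilde\mu_X$ follows immediately.

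I do not expect a serious obstacle here; the lemma is essentially a bookkeeping statement. The only point demanding genuine care is confirming that the identifications (\ref{lcc}) are compatible with $\mu_X$ \emph{globally}, i.e. that when several arcs meet or when an endpoint of one arc lies in the interior of another, the forced identifications remain consistent with a single well-defined value of $\mu_X$ on each resulting class — this is where property (\ref{lc}) and the requirement that each $l_{f(b)f(b')}$ be disjoint from the images of all other vertices are used. Once that compatibility is in hand, the argument is a one-line application of the universal property of the quotient topology. I would therefore organize the write-up as: (i) $\mu_X$ is constant on $\sim$-classes; (ii) hence $\mu_X$ descends to $\widetilde\mu_X$ with $\mu_X=\widetilde\mu_X\circ q$; (iii) $q$ is a quotient map and $\mu_X$ is continuous, so $\widetilde\mu_X$ is continuous.
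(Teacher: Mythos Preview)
Your proposal is correct and is essentially the same argument as the paper's: both verify that $\mu_X=\widetilde\mu_X\circ q$ (equivalently, that $q^{-1}\widetilde\mu_X^{-1}(U^X_y)=\mu_X^{-1}(U^X_y)$ for basic opens) and then invoke the quotient topology together with the known continuity of $\mu_X$. The paper spells out the saturation check on a basic open set $U^X_{f(b')}$ explicitly via the observation $l_{f(b)f(b')}\cap|\mathcal{K}(X-U^X_{f(b')})|=\emptyset$, whereas you subsume that in the already-established well-definedness of $\widetilde\mu_X$; the logical content is the same.
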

\begin{proof}
Let $f(b^{\prime})\in X$ and let $Z=\mathcal{K}(X-U^X_{f(b^{\prime})})$, then by {\rm{\cite[Theorem 1.4.6]{JAB2011}}} $$\mu_X^{-1}(U^X_{f(b^{\prime})})=|\mathcal{K}(X)|-|Z|.$$
Note that (\ref{lc}) and (\ref{lcc}) imply
 $$(l_{f(b)f(b^{\prime})}-f(b))\cap |Z|=\emptyset.$$
Thus $\pi^{-1}\circ\widetilde\mu_X^{-1}(U^X_{f(b^{\prime})})=\mu_X^{-1}(U^X_{f(b^{\prime})})$ is open, where $\pi: |\mathcal{K}(X)|\rightarrow J^f_T(|\mathcal{K}(X)|)$ is the projection.
\end{proof}

\begin{Lemma}\label{continu}
The map $$J_{T}(f):\gamma_{T}\rightarrow J^f_T(|\mathcal{K}(X)|)$$ defined by $J_{T}(f)(a)=[f(\lambda_T(a))]$ is continuous.
\end{Lemma}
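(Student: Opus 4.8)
The plan is to show that $J_T(f)$ is order-preserving, since by Proposition \ref{map}(1) a map between finite posets (equivalently, Alexandroff $T_0$ spaces) is continuous precisely when it is order-preserving, and $\gamma_T$ is an induced subgraph of $\mathbb{Z}^n$ which we treat as a poset via its Hasse-diagram interpretation. So the whole content reduces to: for comparable vertices $a < a'$ in $\gamma_T$, show that $[f(\lambda_T(a))]$ and $[f(\lambda_T(a'))]$ are comparable in $J^f_T(|\mathcal{K}(X)|)$, in the correct direction, with the ordering on the quotient being the Alexandroff order induced by its (Alexandroff) topology — noting first that the quotient of a finite-type space by these finitely many identifications is again an Alexandroff $T_0$ space whose order we must pin down.

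First I would unwind what the order on $J^f_T(|\mathcal{K}(X)|)$ looks like near the distinguished points $[f(b)]$. Writing $b = \lambda_T(a)$, $b' = \lambda_T(a')$ with $a < a'$, the identifications (\ref{lcc}) collapse the interior of the chosen curve $l_{f(b)f(b')}$ onto one of the two endpoints, namely onto whichever of $f(b), f(b')$ has the $\mu_X$-smaller support-minimum. By (\ref{lc}), the surviving curve interior is forced to lie entirely inside the open star of that $\mu_X$-smaller vertex in $|\mathcal{K}(X)|$ — concretely, every interior point $x$ has $\text{support}(x) = \text{support}(f(b))$ or $\text{support}(f(b'))$, which is exactly the support condition guaranteeing $\mu_X(x) = \mu_X(f(b))$ or $\mu_X(f(b'))$. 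The key local statement to extract is: in the quotient, a small neighborhood of the point $[f(b')]$ contains the image of $[f(b)]$ (when $\mu_X(f(b)) \le \mu_X(f(b'))$), hence $[f(b)] \le [f(b')]$ in the induced order, and symmetrically in the other case. Combined with the fact that on vertices $J_T(f)$ is essentially $\mu_X \circ f \circ \lambda_T$ composed with the quotient, this is precisely the monotonicity we want: $a < a'$ forces $J_T(f)(a), J_T(f)(a')$ comparable with the comparison realized by the collapsed curve.

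Alternatively — and this is probably the cleaner route to write up — I would use Lemma \ref{continuu} as a black box. We already know $\widetilde\mu_X : J^f_T(|\mathcal{K}(X)|) \to X$ is continuous, and $\widetilde\mu_X \circ J_T(f) = \mu_X \circ f \circ \lambda_T$ on vertices. But that composite landing in $X$ being continuous (order-preserving on $\gamma_T$) is not by itself enough, because $\widetilde\mu_X$ is not injective. So instead I would argue directly: take an open set $V \subseteq J^f_T(|\mathcal{K}(X)|)$; since the quotient map $\pi : |\mathcal{K}(X)| \to J^f_T(|\mathcal{K}(X)|)$ is a quotient map, $V$ is open iff $\pi^{-1}(V)$ is open in $|\mathcal{K}(X)|$, and I need $J_T(f)^{-1}(V)$ open in $\gamma_T$. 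For a vertex $a$ of $\gamma_T$, the minimal open set around $a$ in the poset $\gamma_T$ is $U^{\gamma_T}_a$, whose geometric realization under $\lambda_T$ is the open star of $\lambda_T(a)$ intersected with $\lambda_T|\mathcal{K}(\gamma_T)|$ — a contractible neighborhood. I would show $f$ maps this neighborhood into $\pi^{-1}$ of the minimal open set around $[f(\lambda_T(a))]$ in the quotient, using exactly the curve/support bookkeeping of (\ref{lc})–(\ref{lcc}): any curve $l_{f(b)f(b')}$ emanating from $f(\lambda_T(a))$ toward a $\gamma_T$-neighbor has its interior contained in the star of $f(\lambda_T(a))$ or gets collapsed onto $f(\lambda_T(a))$, so no ``new'' strata are entered.

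The main obstacle is genuinely the order-theoretic identification of the quotient space $J^f_T(|\mathcal{K}(X)|)$: one must verify that after performing the identifications (\ref{lcc}) the result is still an Alexandroff $T_0$ space (it is, being obtained from a finite simplicial complex's realization by collapsing finitely many arcs onto finitely many points, with the $\mu_X$-pullback of the finite topology of $X$ refined appropriately), and then compute its minimal open sets precisely enough to see that $J_T(f)$ pulls them back to open sets of $\gamma_T$. Once the local picture ``$l_{f(b)f(b')}$ lives in the star of the $\mu_X$-smaller endpoint'' is stated cleanly, the continuity is a short check; but getting that local picture exactly right, handling the two cases in (\ref{lc})/(\ref{lcc}) and the interplay with which of $a,a'$ is larger in $\gamma_T$, is where the care is needed. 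I would also double-check the edge case where $f(b) = f(b')$ (the curve is a point) and where several neighbors' curves share the endpoint $f(\lambda_T(a))$, so that the minimal open neighborhood in the quotient is the appropriate finite union.
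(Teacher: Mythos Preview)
There is a genuine gap in your framing: the codomain $J^f_T(|\mathcal{K}(X)|)$ is \emph{not} an Alexandroff $T_0$ space. It is defined as a quotient of the geometric realization $|\mathcal{K}(X)|$ (a genuine polyhedron with its usual topology), obtained by collapsing finitely many arcs. Such a space is a CW-type space with uncountably many points and no minimal open neighborhoods around the distinguished points $[f(b)]$; your parenthetical claim that it ``is'' Alexandroff is simply false, and the phrase ``the $\mu_X$-pullback of the finite topology of $X$ refined appropriately'' does not describe the quotient topology the paper is using. Consequently your primary strategy --- reduce continuity to order-preservation via Proposition~\ref{map}(1) --- does not apply, because that proposition concerns maps between posets, and your ``alternative'' route still leans on nonexistent ``minimal open sets around $[f(\lambda_T(a))]$ in the quotient''.

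The paper's proof works in the honest topological setting: it reduces to $n=1$ by the product topology, and for each vertex $a\in\gamma_T$ it picks a sufficiently small metric ball $B_{[f(\lambda_T(a))]}$ in the quotient (so small that its $\pi$-preimage meets $f(\gamma_T)$ only at $f(\lambda_T(a))$) and then argues directly, via the two cases of the identification rule~(\ref{lcc}), that each neighbor $a\pm 1$ lands in $J_T(f)^{-1}(B_{[f(\lambda_T(a))]})$. The second case is the subtle one: when the curve near $f(\lambda_T(a-1))$ is identified with $[f(\lambda_T(a))]$, one observes that the ball $B_{[f(\lambda_T(a))]}=B_{[b]}$ for $b$ ranging along the collapsed arc, and lets $b$ approach $f(\lambda_T(a-1))$ to capture it. Your curve/support bookkeeping is the right raw material, but it must be deployed against arbitrary small metric neighborhoods in the quotient, not against a poset order on the quotient that does not exist.
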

\begin{proof}
We may assume that $n=1$ here, since the topology on $\gamma_{T}$ is given by product topology. Given a point $[f(\lambda_T(a))]\in J^f_T(|\mathcal{K}(X)|)$, let $B_{[f(\lambda_T(a))]}\subseteq J^f_T(|\mathcal{K}(X)|)$ be a ball centered at $[f(\lambda_T(a))]$ with radius sufficiently small such that $$\pi^{-1}(B_{[f(\lambda_T(a))]})\cap f(\gamma_T)=\{f(\lambda_T(a))\}.$$
We will prove that $J_{T}(f)^{-1}(B_{[f(\lambda_T(a))]})$ is open which will shows that $J_{T}(f)$ is continuous.

Let $a$ is an odd integer. We can assume without loss of generality that $U^{\gamma_T}_a=\{a-1, a, a+1\}$. 
 If $\mu_X(f(\lambda_T(a-1)))\leq\mu_X(f(\lambda_T(a)))$, then $l_{f(\lambda_T(a-1))f(\lambda_T(a))}-\{f(\lambda_T(a))\}$ is identified with $f(\lambda_T(a-1))$. The fact $$\pi^{-1}(B_{[f(\lambda_T(a))]})\cap\{l_{f(\lambda_T(a-1))f(\lambda_T(a))}-\{f(\lambda_T(a))\}\} \not=\emptyset$$ says that
 $$a-1\in J_{T}(f)^{-1}(B_{[f(\lambda_T(a))]}).$$ 
 
 On the other hand, if $\mu_X(f(\lambda_T(a)))\leq\mu_X(f(\lambda_T(a-1)))$, then $l_{f(\lambda_T(a-1))f(\lambda_T(a))}-\{f(\lambda_T(a-1))\}$ is identified with $f(\lambda_T(a))$. For any point $b\in l_{f(\lambda_T(a-1))f(\lambda_T(a))}-\{f(\lambda_T(a-1))\}$, we have
 $$B_{[f(\lambda_T(a))]}=B_{[b]}.$$ 
 Note that $\pi^{-1}(B_{[b]})$ is an open ball containing $b$ in $|\mathcal{K}(X)|$. When $b$ becomes arbitrarily close to $f(\lambda_T(a-1))$, $f(\lambda_T(a-1))\in\pi^{-1}(B_{[b]}) $. Then it follows that 
 $$a-1\in J_{T}(f)^{-1}(B_{[b]})=J_{T}(f)^{-1}(B_{[f(\lambda_T(a))]}).$$
 The same argument proves that
 $$a+1\in J_{T}(f)^{-1}(B_{[f(\lambda_T(a))]}).$$ Thus by the choice of $B_{[f(\lambda_T(a))]}$ we conclude that
 $$U^{\gamma_T}_a=J_{T}(f)^{-1}(B_{[f(\lambda_T(a))]})$$ which is the open set in $\gamma_T$.
 
 The case $a$ is an even integer can be handled by similar arguments. Hence $J_{T}(f)$ is continuous, as desired.
 \end{proof}
\begin{Remark}\label{az}
{\rm Composing the maps in the previous two lemmas yields an order-preserving map $$\widetilde \mu_X\circ J_{T}(f): \gamma_{T}\rightarrow X.$$
An especially important fact is that when the domain of $f$ is pulled back to $\gamma_{T}$ by the homeomorphism $\lambda_{T}$, $\mu_X\circ f$ and $\widetilde \mu_X\circ J_{T}(f)$ take the same values on every element of $\gamma_{T}$, that is,
$$ \mu_X\circ f\circ\lambda_{T}|\gamma_{T}=\widetilde \mu_X\circ J_{T}(f).$$
This allows $\widetilde \mu_X\circ J_{T}(f)$ to serve as a useful tool for the reconstruction and approximation of topological properties of $f$.

Hence, when $f\in \pi_n(|\mathcal{K}(X)|, x_0)$, one might hope that
$\widetilde \mu_X\circ J_{T}(f)$ could provide all necessary details to form a preimage of $f$ in $\pi_n^D(X, x_0)$.
This is indeed the case as we shall see.

}
\end{Remark}

 To get a better understanding of the proof of this theorem, let us sketch the proof strategy.
The main idea of proving this theorem is to pull back information about $X$ to $|\mathcal{K}(X)|$ via the powerful tool, the {\emph{K-McCord  map}} $\mu_X$. The injectivity part will not be difficult once we finish the proof of the surjectivity part. The harder part of the proof is to show $\theta$ is surjective. In order to achieve this, when given a map $f: L^n\rightarrow |\mathcal{K}(X)|$, we first reconstruct 
it in terms of a simpler one, a simplicial map $f^{\prime}$. As stated in Remark \ref{az}, we define a preimage $g: (\mathbb{Z}^n, \partial\mathbb{Z}^n)\rightarrow (X, x_0)$ in terms of $f^{\prime}$, namely, $$\theta([g])=[|\mathcal{K}(g)|\circ\lambda_{r_g}^{-1}]=[f].$$ 
The fact that $\mu_X$ is a weak homotopy equivalence allows us to turn to prove
$$[\mu_X\circ\mathcal{K}(g)\circ\lambda^{-1}_{r_g}]=[{\mu_X}\circ |f|].$$ To prove this,
a finite sequence of maps will be interpolated between them, each map in the sequence being comparable with its neighbors. We now start the proof of the theorem, filling in the technical details.
\begin{proof}[Proof of Theorem \ref{homotopy groups}]
	We first show that $\theta$ is surjective.
	Let $f\in \pi_n(|\mathcal{K}(X)|, x_0)$.  Using the simplicial approximation theorem {\rm{\cite[Theorem 16.1]{JRM1984}}},
	we can  represent $f$  by a simplicial map  $f^{\prime}$ from $K$ to $\mathcal{K}(X)$, where $K$ is some iterated barycentric subdivision of $L^n$. The key observation now is that for an arbitrary integer $r>0$, under the homeomorphism $\lambda_{r}$, $\mathcal{K}(\gamma_r)$ becomes a triangulation of $L^n$.
	When $r$ is sufficiently large, $\mathcal{K}(\gamma_r)$ can be regarded as a refinement of $K$.
	 
	Given a simplex $\sigma$ of $\mathcal{K}(X)$, let $|\sigma|_{\epsilon}$ be the open set in $|\mathcal{K}(X)|$  with the following properties:
	\begin{itemize}
		\item [(1)] $|\sigma|\subseteq|\sigma|_{\epsilon}$.
		\item [(2)] $|\sigma|_{\epsilon}$ contains no other vertices of $\mathcal{K}(X)$ besides those of $\sigma$. 
	\end{itemize}
	Cover $L^n$ by the open sets
	$$|f^{\prime}|^{-1}(|\sigma|_{\epsilon})$$
 as $\sigma$ ranges over all maximal simplices of $\mathcal{K}(X)$. Denote this open cover by $\mathcal{A}$.
	Let $\delta_{\mathcal{A}}$  be the Lebesgue number for this open cover $\mathcal{A}$.
	We take $r_1$ to be an even multiple of $r$ so that each subpolyhedron of 
	$$\lambda_{r_1}(|\mathcal{K}(\gamma_{r_1})|)$$
	has diameter less than $\frac{\delta}{2}$ where $\delta<\delta_{\mathcal{A}}$. By the choice of $r_1$, $\mathcal{K}(\gamma_{r_1})$ can be regarded as a refinement of $\mathcal{K}(\gamma_r)$.

	Consider a map $g: (\mathbb{Z}^n, \partial\mathbb{Z}^n)\rightarrow (X, x_0)$ given by
	\begin{equation*}
	g(w)=\begin{cases}
	{\widetilde \mu_X}\circ J_{T_{r_1}}(|f^{\prime}|)(w),~\text{if}~w\in \gamma_{{r_1}},\\
	x_0,~\text{otherwise.}
	\end{cases}
	\end{equation*} 
	$g$ is order-preserving by Lemma \ref{continuu} and Lemma \ref{continu}.
	We claim that
	\begin{equation}\label{surj}
	[\mu_X\circ |\mathcal{K}(g)|\circ\lambda^{-1}_{r_g}]=[\mu_X\circ |\mathcal{K}(g)|\circ\lambda^{-1}_{r_1}]
	\end{equation}
	 in $\pi_n(X, x_0)$, the classical $n$-th homotopy group of $X$. For an arbitrary partition $T$ of $L^n$,
	there is a sequence of partitions
	\begin{equation}\label{eqq3}
	T=A_d\supseteq\cdots\supseteq A_{k}\supseteq A_{k-1}\supseteq\cdots\supseteq A_1\supseteq A_0= T_1,
	\end{equation}
	the former one being a refinement of the latter one  by adding an $(n-1)$-dimensional hyperplane.  For each $k$,  
	construct a continuous  map $$F_k: (L^n, \partial L^n)\rightarrow (X, x_0)$$ by
	\begin{equation*}
	F_{k}(w)=\mu_X\circ|\mathcal{K}(g)|\circ\lambda^{-1}_{A_{k}}(w)
	\end{equation*}
	where $0\leq i\leq d$.
	If we can show that
	\begin{equation}\label{eq6}
	[F_{k}]=[F_{k-1}],
	\end{equation}
	then this will imply
	\begin{equation}\label{eqq4}
	[F_{d}]=\cdots=[F_0]=[\mu_X\circ|\mathcal{K}(g)|\circ\lambda^{-1}_{1}].
	\end{equation}
	
	Denote by $$-1=a_{-m^{\prime}}<a_{-m^{\prime}+1}<\cdots<a_0<\cdots <a_m=1$$ the $1$-st component of the partition $A_{k-1}$.
	Without loss of generality we may assume the $(n-1)$-dimensional hyperplane added to $A_{k}$ from $A_{k-1}$ is $x_1=\tilde a_{0}$, where $a_0<\tilde{a}_0<a_1$. 
	Let $$S: (L^n, \partial L^n)=\left(\lambda_{A_k}(|\mathcal{K}(\gamma_{A_k})|), \partial \lambda_{A_k}(|\mathcal{K}(\gamma_{A_k})|)\right)\rightarrow (X, x_0)$$ be the continuous map defined by
	\begin{align*}
	(x_1, x_2,\cdots, x_n)\mapsto\begin{cases}
	F_{k}(a_0, x_2,\cdots, x_n),~\text{if}~x_1\in [a_0, \tilde a_{0}],\\
	F_{k}(x_1, x_2,\cdots, x_n),~\text{if}~x_1<a_0,\\
F_{k}\lvert [a_{0}, \tilde a_{0}],~\text{if}~x_1\in [\tilde a_{0}, a_{1}],\\
F_{k}\lvert [\tilde a_{0},  a_{1}],~\text{if}~x_1\in [ a_{1}, a_{2}],\\
F_{k}\lvert [a_{i-1},  a_{i}],~\text{if}~x_1\in [ a_{i}, a_{i+1}],~i>1.
	\end{cases}
	\end{align*}
	 Checking through the definitions, one see that  
	 $ F_{k-1}$ is equal to $S$ at all vertices of $\lambda_{A_k}(|\mathcal{K}(\gamma_{A_k})|)$ 
	 except at the vertex $$(\tilde a_{0}, x_2,\cdots, x_n).$$ Then by the choice of $r_1$, we have that the following two points
	 \begin{align*}
	&b_1= (\lambda_{r_1})\circ\lambda^{-1}_{A_{k-1}}(\tilde a_{0}, x_2,\cdots, x_n),\\ &b_2=(\lambda_{r_1})\circ\lambda^{-1}_{A_{k}}(a_0, x_2,\cdots, x_n),
	 \end{align*}
	lie in $|f^{\prime}|^{-1}(|\sigma|_{\epsilon})$
for some maximal simplex $\sigma$ of $\mathcal{K}(X)$. Note that $f^{\prime}$ takes simplices to simplices.
With this in mind one can draw a stronger conclusion that $|f^{\prime}|(b_1)$ and $|f^{\prime}|(b_2)$ are indeed contained in $|\sigma|$, since $b_1$ and $b_2$ lie in some simplex of $K$,
$\mathcal{K}(\gamma_{r_1})$ being regarded as a refinement of $K$.
This means that 
$\mu_X\circ |f^{\prime}|(b_1)$ and $\mu_X\circ|f^{\prime}|(b_2)$ are comparable, all vertices of a simplex being pairwise comparable, which is equivalent to saying that $F_{k-1}$ and $S$ are comparable at the point $(\tilde a_{0}, x_2,\cdots, x_n).$ 
Hence we can learn that
\begin{equation}\label{eqq1}
[F_{k-1}]= [S]
\end{equation}

Having taken care of the comparability between $F_{k-1}$ and $S$, we turn now to show that $F_k$ and $S$ are also comparable. Let $$S_0: (L^n, \partial L^n)=\left(\lambda_{A_k}(|\mathcal{K}(\gamma_{A_k})|), \partial \lambda_{A_k}(|\mathcal{K}(\gamma_{A_k})|)\right)\rightarrow (X, x_0)$$ be the continuous map defined by
\begin{align*}
(x_1, x_2,\cdots, x_n)\mapsto\begin{cases}
F_{k}(\tilde a_0, x_2,\cdots, x_n),~\text{if}~x_1\in [\tilde a_0,  a_{1}],\\
F_{k}(x_1, x_2,\cdots, x_n),~\text{if}~x_1<\tilde a_0,\\
F_{k}\lvert [\tilde a_{0},  a_{1}],~\text{if}~x_1\in [ a_{1}, a_{2}],\\
F_{k}\lvert [a_{i-1},  a_{i}],~\text{if}~x_1\in [ a_{i}, a_{i+1}],~i>1.
\end{cases}
\end{align*}
Similarly, when $i>0$, let
$$S_i: (L^n, \partial L^n)=\left(\lambda_{A_k}(|\mathcal{K}(\gamma_{A_k})|), \partial \lambda_{A_k}(|\mathcal{K}(\gamma_{A_k})|)\right)\rightarrow (X, x_0)$$
be the continuous map given by
\begin{align*}
(x_1, x_2,\cdots, x_n)\mapsto\begin{cases}
F_{k}( a_i, x_2,\cdots, x_n),~\text{if}~x_1\in [ a_i,  a_{i+1}],\\
F_{k}(x_1, x_2,\cdots, x_n),~\text{if}~x_1< a_i,\\
F_{k}\lvert [a_{i},  a_{i+1}],~\text{if}~x_1\in [ a_{i+1}, a_{i+2}].
\end{cases}
\end{align*}
Denote $S_{-1}=S$. When $-1\leq i\leq m-1$, consider the relation between $S_i$ and $S_{i+1}$.
The proof that $F_{k-1}$ and $S_{-1}$ is comparable still works in this context, so
\begin{equation}\label{eqq2}
[S]=[S_{-1}]=[S_0]=\cdots=[S_m]=[F_k].
\end{equation}
In view of (\ref{eqq1}) and (\ref{eqq2}), the proof of (\ref{eq6}) is finished. 

Now let $T=r_1$ and $T=T_{r_g}$ in (\ref{eqq3}) respectively. Then by (\ref{eqq4})
$$ [\mu_X\circ|\mathcal{K}(g)|\circ\lambda^{-1}_{r_1}]=[\mu_X\circ|\mathcal{K}(g)|\circ\lambda^{-1}_{1}]=[\mu_X\circ|\mathcal{K}(g)|\circ\lambda^{-1}_{r_g}],$$
 which finishes the proof of the claim (\ref{surj}).
Note that 
$$[\mu_X\circ|f^{\prime}|]=[\mu_X\circ |\mathcal{K}(g)|\circ\lambda^{-1}_{r_1}].$$
 Since $\mu_X$ is a weak homotopy equivalence {\rm{\cite[Theorem 1.4.6]{JAB2011}}}, the equality (\ref{surj}) leads us to conclude that $$\theta(g)=[|f^{\prime}|]=[f].$$ 
The proof of the surjectivity of $\theta$ is complete.

For injectivity of $\theta$, suppose $h: (\mathbb{Z}^n, \partial\mathbb{Z}^n)\rightarrow (X, x_0)$ is a map of radius $r_h$ such that $\theta([h])=0$. 
Let $G: (L^n, \partial L^n)\times I\rightarrow (|\mathcal{K}(X)|, x_0)$ is a homotopy from 
$|\mathcal{K}(h)|\circ\lambda_{r_h}^{-1}$ to the constant map $C_{x_0}$. Since $L^n\times I$ can be viewed as a subspace of $L^{n+1}$, the preceding argument can be applied to the map $G$ with $G$ in place of $f$ in the current situation. Thus we can obtain a simplicial map $G^{\prime}$ and a homotopy $H$ corresponding to $f^{\prime}$ and $g$ respectively.

The homotopy $H$  takes the form
\begin{align*}
&H: (\mathbb{Z}^n, \partial\mathbb{Z}^n)\times I_{p}\rightarrow (X, x_0),\\
&(w,t)\mapsto\begin{cases}
{\widetilde \mu_X}\circ J_{T_{r_1}}(|G^{\prime}|)(w, t),~\text{if}~(w, t)\in \gamma_{{r_1}}\times I_p,\\
x_0,~\text{otherwise,}
\end{cases}
\end{align*} 
where $p\geq 1$. 
Notice that  $|\mathcal{K}(h)|\circ\lambda_{r_h}^{-1}$ is already simplicial on the underlying simplicial complex of $\lambda_{r_h}(|\mathcal{K}(\gamma_{r_h})|)$. Then by the proof in simplicial approximation theorem {\rm{\cite[Theorem 16.1]{JRM1984}}}, $G^{\prime}$ can be chosen so that the relationship between $H_0$ and $h$ is essentially the same as the relationship between $F_k$ and $S$ in the proof for the surjectivity part. Thus the similar argument shows that 
$$[H_0]=[h].$$
 Then $[h]=0$ in $\pi_n^D(X, x_0)$, proving the injectivity of $\theta$. 
\end{proof}

 To get a feeling for how the discrete homotopy groups can be derived combinatorially, let us first look at a simple example.

\begin{Example}\label{trivial homotopy}
{\rm If $X$ is a poset with a maximum or minimum $x_0$, then all discrete homotopy groups vanish. To see this, note that any map $f: \mathbb{Z}^n\rightarrow X$ is comparable with the constant map $C_{x_0}: \mathbb{Z}^n\rightarrow x_0$. More precisely, either $f\leq C_{x_0}$ or $f\geq C_{x_0}$ occurs. For example, the poset $X$ depicted below has a maximum $d$ and therefore $\pi_n^{D}(X, d)=0$ for all $n$.
\begin{equation*}
\xymatrix{ & d&\\
		 b\ar[ur]& &  c\ar[ul]\\
	a\ar[u]&	&.
}
\end{equation*}
}

\end{Example}

 We now directly compute the $1$-st discrete homotopy group of a circle in the next example, leading to a new viewpoint towards the fundamental group of a circle.

\begin{Example}\label{circle}
	{\rm Let $X$ be a  poset described below with a fixed basepoint $b$, which is a finite model of the $1$-dimensional sphere $S^1$. 
		\begin{equation*}
		\xymatrix{   c &  d\\
			a\ar[u] \ar[ur]	&b \ar[u] \ar[ul].
		}
		\end{equation*}
		Let $f\in\pi^D_1(X, b)$ be a map  with radius $r$.
		Then $f$ can be represented by
		\begin{equation*}\label{D11}
		\cdots b\rightarrow x_{-r+1}\leftarrow\cdots x_{-1}\leftarrow x_0\rightarrow x_1\cdots\rightarrow x_{r-1}\leftarrow b\cdots,
		\end{equation*}
		where $x_i$ is the value of $f$ at $i$. 
		If two consecutive points $x_i$ and $x_{i+1}$ are equal, $-r+1\leq i\leq r-2$,  for example,  $x_i=x_{i+1}$ for $0\leq i\leq r-2$ and $i$ is even, 
		then $f\geq f_1$, where $f_1\in\pi^D_1(X, b)$ is defined by
		\begin{equation*}
		\cdots b\rightarrow \cdots \leftarrow x_0\cdots x_{i-1}\leftarrow x_i\rightarrow x_{i+2}\leftarrow x_{i+2}\rightarrow x_{i+3}\cdots x_{r-1}\leftarrow b\cdots.
		\end{equation*}
		Repetition of this process eventually produces a map $g\in \pi^D_1(X, b)$ given by
		\begin{equation}\label{D12}
		\cdots b\rightarrow y_{-n+1}\leftarrow\cdots y_{-1}\leftarrow y_0\rightarrow y_1\cdots\rightarrow y_{m-1}\leftarrow b\cdots,
		\end{equation}
		where $m, n<r$, $y_{-n+1},~ y_{m-1}\not=b$, and two consecutive points $y_i$ and $y_{i+1}$, $-n\leq i\leq m-2$,  are not equal.
		
		Denote by $C_b$ the constant map $\mathbb{Z}\rightarrow b$. 
		If  $f(i)\not= a$ for all $i$, then $$ C_b\leq f,$$ which means $[f]=0\in \pi^D_1(X, b)$.  
		We now assume  the values of $f$, thus $g$, contain $a$. If $y_i=y_{i+2}=a$, then 
		\begin{align}
		g&= \cdots a\rightarrow y_{i+1}\leftarrow a\rightarrow y_{i+3}\cdots\nonumber\\
		> g_1&=\cdots a\rightarrow a\leftarrow a\rightarrow y_{i+3}\cdots\nonumber\\
		< g_2&=\cdots a\rightarrow y_{i+3}\leftarrow y_{i+3}\rightarrow y_{i+3}\cdots\nonumber\\
		> g_3&=\cdots a\rightarrow y_{i+3}\leftarrow b\rightarrow y_{i+3}\cdots\nonumber.
		\end{align}
		This argument does apply for the case that $y_i=y_{i+2}=b$. 
		After repeating this process, we obtain a map $h\in \pi^D_1(X, b)$ given by
		\begin{equation}\label{D122}
		\cdots b\rightarrow z_{-n+1}\leftarrow\cdots z_{-1}\leftarrow z_0\rightarrow z_1\cdots\rightarrow z_{m-1}\leftarrow b\cdots,
		\end{equation}
		where if $z_i=a$ for $-n+2\leq i\leq m-2$, then $z_{i-2}=z_{i+2}=b$, and if $z_i=b$ for $-n+4\leq i\leq m-4$, then $z_{i-2}=z_{i+2}$, and two consecutive vertices $z_{i}$ and $z_{i+1}$, $-n+1\leq i\leq m-2$, are distinct.
		
		Note that $b$ appears in (\ref{D122}) with a period of four in the form
		\begin{equation}\label{D111}
		b\rightarrow z_{i}\leftarrow a\rightarrow z_{i+2}\leftarrow b,
		\end{equation} 
		where $-n+1\leq i\leq m-3$.
		If $z_i$ and $z_{i-1}$ are equal, for example, $z_i=z_{i+2}=c$, then
		\begin{align}
		h&= \cdots  b\rightarrow c\leftarrow a\rightarrow c\leftarrow b\cdots\nonumber\\
		< h_1&=\cdots  b\rightarrow c\leftarrow c\rightarrow c\leftarrow b\cdots\nonumber\\
		>h_2&=\cdots  b\rightarrow b\leftarrow b\rightarrow b\leftarrow b\cdots\nonumber.
		\end{align}
		This process, together with the methods of obtaining (\ref{D12}), can be repeated to get a map $k\in \pi^D_1(X, b)$ defined by
		\begin{equation}\label{D1222}
		\cdots b\rightarrow v_{-n+1}\leftarrow\cdots v_{-1}\leftarrow v_0\rightarrow v_1\cdots\rightarrow v_{m-1}\leftarrow b\cdots,
		\end{equation}
		in which (\ref{D111}) becomes  one of the two forms:
		\begin{align*}
		&b\rightarrow c\leftarrow a\rightarrow d\leftarrow b,\\
		&b\rightarrow d\leftarrow a\rightarrow c\leftarrow b,
		\end{align*}
		the numbers of them in (\ref{D1222}) being denoted as $t_1$ and $t_2$, respectively.
		
		Let $e\in \pi^D_1(X, b)$ be a map of radius $2$ given by
		\begin{equation*}
		\cdots b\rightarrow d\leftarrow a\rightarrow c\leftarrow b\cdots.
		\end{equation*}
		Observe that
		\begin{align*}
		e\cdot e^{-1}=&\cdots  b\rightarrow d\leftarrow a\rightarrow c\leftarrow b\rightarrow c\leftarrow a\rightarrow d\leftarrow b\cdots\\
		<&\cdots  b\rightarrow d\leftarrow a\rightarrow c\leftarrow c\rightarrow c\leftarrow a\rightarrow d\leftarrow b\cdots\\
		>&\cdots  b\rightarrow d\leftarrow a\rightarrow a\leftarrow a\rightarrow a\leftarrow a\rightarrow d\leftarrow b\cdots\\
		<&\cdots  b\rightarrow d\leftarrow d\rightarrow d\leftarrow d\rightarrow d\leftarrow a\rightarrow d\leftarrow b\cdots\\
		>&\cdots  b\rightarrow b\leftarrow b\rightarrow b\leftarrow b\rightarrow b\leftarrow b\rightarrow b\leftarrow b\cdots=C_b.
		\end{align*}
		Similar argument implies $e^{-1}\cdot e=C_b$.
		We have just seen that  $[f]=[k]$ is the product of finitely many $e$ and $e^{-1}$. With the techniques of getting (\ref{D12}),
		$k$ is homotopic to a map $l\in \pi^D_1(X, b)$, which is an element of the free group $F([e])$ generated by $[e]$.
		This says $\pi^D_1(X, b)=F([e])=\mathbb{Z}$. In particular, $$[f]=[k]=(t_2-t_1)[e].$$
	}
\end{Example}

\begin{Remark}
	{\rm The computations for classical homotopy groups for posets rely heavily on the topological structures of  spheres of arbitrary dimensions and their associated order complexes, and the continuous maps between them. It is known that to actually compute the classical homotopy groups is usually too difficult a procedure to be carried out in practice.
		
	The process of obtaining the discrete homotopy groups of posets differs from the classical one in a significant way, as the last two examples show. The computation of these groups proceeds via the classification of all comparable maps in the category of posets, rather than via the homotopy classification of continuous maps in the category of topological spaces. This fact  obviates the need for their associated order complexes. Problems regarding homotopy invariants of posets can now be tackled in a more visual and direct way in their own context. 
		}
\end{Remark}

\section{Discrete  homology of finite posets}\label{section4}
\subsection{Discrete cubical homology}
Let $X$ be a finite poset. Denote by $Q_1$ the finite poset with vertex set $\{0, 1\}$ and an edge  $(0, 1)$. For each $n\geq 1$, let $Q_n$ be the finite poset given by
\begin{equation*}
Q_n=\begin{cases}
0 ~&\text{if}~n=0,\\
Q_1^n~&\text{if}~n\geq 1.
\end{cases}
\end{equation*}
\begin{Definition}
\rm{An\emph{ $n$-cube} of $X$ is a map $\sigma: Q_n\rightarrow X$.}
\end{Definition}
Let $L_n(X)$ be the free abelian group generated by all  $n$-cubes.
 We define two face maps from $L_n(X)$ to $L_{n-1}(X)$. Let  $\sigma\in L_n(X)$. For $n\geq 1$, $i\in\{1,\cdots,n\}$ and $(a_1,\cdots, a_{n-1})\in Q_{n-1}$, let $f_i^{-}\sigma$ and $f_i^{+}\sigma$ be the $(n-1)$-cubes of $X$ given by
 \begin{align*}
&f^{-}_i\sigma(a_1,\cdots, a_{n-1})=\sigma(a_1,\cdots, a_{i-1}, 0, a_i,\cdots, a_{n-1})\\
 &f^{+}_i\sigma(a_1,\cdots, a_{n-1})=\sigma(a_1,\cdots, a_{i-1}, 1, a_i,\cdots, a_{n-1}).
 \end{align*}
 
An $n$-cube $\sigma$ is \emph{degenerate} if $f_i^-=f_i^+$ for some $i$. Let $D_n(X)$ be the free abelian group generated by all degenerate $n$-cubes and $C_n(X)=L_n(X)/D_n(X)$, whose elements are called $n$-chains in $X$.
\begin{Definition}
\rm{The \emph{boundary} of an $n$-cube $\sigma: Q_n\rightarrow X$ is
\begin{equation*}
\partial_n^{\rm Cube}(\sigma)=\sum_{i=1}^n(-1)^i(f_i^-\sigma-f_i^+\sigma).
\end{equation*}
}
\end{Definition}
 The boundary operator extends to a group homomorphism $\partial_n^{\rm Cube}: L_n(X)\rightarrow L_{n-1}(X)$. Since $\partial_n^{\rm Cube}(D_n(X))\subseteq D_{n-1}(X)$, we obtain a map $\partial_n^{\rm Cube}: C_n(X)\rightarrow C_{n-1}(X)$ and a chain complex $(C_*(X), \partial_*^{\rm Cube})$.
Let $$H^{\rm Cube}_n(X)=\rm{Ker}\partial_n^{\rm Cube}/\rm{Im}\partial_{n+1}^{\rm Cube}.$$
The group $H_n^{\rm Cube}(X)$ is called the \emph{$n$-th discrete  cubical homology group} of $X$. If $z\in \rm{Ker}\partial_n^{\rm Cube}$, then we write $[z]$ for the equivalence class of $z$ in $H_n^{\rm Cube}(X)$.

The connection between the discrete homotopy theory and the discrete homology theory is expressed by the following theorem.
\begin{Theorem}
Let $X_1$ and $X_2$ be finite posets. If $X_1$ and $X_2$ are homotopy equivalent, then 
$$H^{\rm Cube}_n(X_1)=H^{\rm Cube}_n(X_2)$$ for all $n\geq 0$.
\end{Theorem}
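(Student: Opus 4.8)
The plan is to show that discrete cubical homology is a functor on the category of finite posets with order-preserving maps and that it sends homotopic maps to equal homomorphisms; the statement then follows by the standard formal argument. An order-preserving map $f\colon X\to Y$ sends an $n$-cube $\sigma\colon Q_n\to X$ to the $n$-cube $f\circ\sigma\colon Q_n\to Y$; this assignment commutes with the face maps $f_i^{\pm}$ and carries degenerate cubes to degenerate cubes, so it descends to chain maps $f_{\#}\colon C_n(X)\to C_n(Y)$ with $\partial_n^{\rm Cube} f_{\#}=f_{\#}\partial_n^{\rm Cube}$, hence to homomorphisms $f_{*}\colon H_n^{\rm Cube}(X)\to H_n^{\rm Cube}(Y)$; and composition of cubes with maps being strictly associative and unital, $(g\circ f)_{*}=g_{*}f_{*}$ and $(\mathrm{id}_X)_{*}=\mathrm{id}$.

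The heart of the matter is to show that $f\le g$ (pointwise) implies $f_{*}=g_{*}$. Since $f(x)\le g(x)$ for all $x$, the map $\Phi\colon X\times Q_1\to Y$ given by $\Phi(x,0)=f(x)$, $\Phi(x,1)=g(x)$ is order-preserving (the only case using the hypothesis is $(x,0)\le(x',1)$, which follows from $f(x)\le f(x')\le g(x')$). For an $n$-cube $\sigma$ of $X$ define an $(n+1)$-cube $P(\sigma)$ of $Y$ by $P(\sigma)(a_1,\dots,a_n,a_{n+1})=\Phi\big(\sigma(a_1,\dots,a_n),a_{n+1}\big)$; one checks $P$ preserves degeneracies, so it descends to $P\colon C_n(X)\to C_{n+1}(Y)$. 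Computing faces gives $f_i^{\pm}P(\sigma)=P(f_i^{\pm}\sigma)$ for $1\le i\le n$, while $f_{n+1}^{-}P(\sigma)=f\circ\sigma$ and $f_{n+1}^{+}P(\sigma)=g\circ\sigma$; substituting into the definition of $\partial_{n+1}^{\rm Cube}$ yields the chain-homotopy identity $\partial_{n+1}^{\rm Cube}P(\sigma)-P(\partial_n^{\rm Cube}\sigma)=(-1)^{n+1}(f_{\#}\sigma-g_{\#}\sigma)$. After absorbing the sign $(-1)^{n}$ into $P$ on $C_n$, this says $f_{\#}$ and $g_{\#}$ are chain homotopic, so $f_{*}=g_{*}$; by symmetry the same holds if $f\ge g$.

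To finish: by Proposition \ref{map}(2) two homotopic maps $f,g\colon X\to Y$ are joined by a fence $f=f_0\le f_1\ge f_2\le\cdots f_k=g$ of order-preserving maps, so the previous step gives $f_{*}=(f_0)_{*}=(f_1)_{*}=\cdots=(f_k)_{*}=g_{*}$. Hence if $X_1$ and $X_2$ are homotopy equivalent via $\phi\colon X_1\to X_2$, $\psi\colon X_2\to X_1$ with $\psi\phi\simeq\mathrm{id}_{X_1}$ and $\phi\psi\simeq\mathrm{id}_{X_2}$, then $\psi_{*}\phi_{*}=(\psi\phi)_{*}=\mathrm{id}$ and $\phi_{*}\psi_{*}=\mathrm{id}$, so $\phi_{*}\colon H_n^{\rm Cube}(X_1)\to H_n^{\rm Cube}(X_2)$ is an isomorphism for all $n\ge0$. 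The only step requiring genuine care is the chain-homotopy computation: verifying that $P$ respects degeneracies and tracking the signs and index shifts in $\partial P+P\partial$, where the $n$ interior faces of $P(\sigma)$ reproduce $P$ applied to the faces of $\sigma$ while the two extremal faces in the adjoined direction recover $f\circ\sigma$ and $g\circ\sigma$; this parallels the homotopy-invariance proof for cubical homology of graphs in the references cited above.
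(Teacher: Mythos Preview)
Your argument is correct and is precisely the approach the paper has in mind: the paper omits the proof entirely, saying only that ``the idea of the proof of this theorem is exactly the same as the one in \cite[Theorem 3.4]{HB2019},'' and your prism operator $P(\sigma)=\Phi\circ(\sigma\times\mathrm{id}_{Q_1})$ together with the fence argument via Proposition \ref{map}(2) is exactly that proof transported from graphs to posets. The face and sign computations you give are correct, and your remark that $P$ preserves degeneracies (because $f_i^{\pm}P(\sigma)=P(f_i^{\pm}\sigma)$ for $i\le n$) is the only point needing care, which you have handled.
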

The idea of the proof of this theorem is exacly the same as the one in {\rm{\cite[Theorem 3.4]{HB2019}}}. We will not prove this theorem here.

We now construct a natural homomorphism from the discrete to the simplicial homology groups.

Let $K$ be a simplicial complex. For $n\geq 0$, we denote by $L^{\rm Simpl}_n(K)$ the free abelian group generated by all nondegenerate simplices of $K$ and write $D^{\rm Simpl}_n(K)$ for the free abelian group generated by all degenerate simplices of $K$. 
Let $$C^{\rm Simpl}_n(K)=L^{\rm Simpl}_n(K)/D^{\rm Simpl}_n(K).$$
The $n$-th simplicial homology group of $K$ is denoted  $H_n^{\rm Simpl}(K)$ whose associated chain complex is
$(C^{\rm Simpl}_*(K), \partial_*^{\rm Simpl})$. 
Similarly, $H_*^{\rm Sing}$  denotes the singular homology theory. Denote by $\eta_*$ the canonical homomorphism $$H_n^{\rm Simpl}(K)\rightarrow H_*^{\rm Sing}(|K|).$$

To establish a natural homomorphism between the homology groups ${H}_*^{\rm Cube}(X)$ and ${H}_*^{\rm Simpl}(\mathcal{K}(X))$,  we are motivated by the construction of the one in {\rm{\cite{HB2019}}}, which is a natural  map from the cubical homology groups to the path homology groups. This map will be defined in a similar fashion.

 Consider an $n$-cube $\sigma: Q_n\rightarrow X$ with $n\geq 1$. In order to define a map from ${C}_n^{\rm Cube}(X)$ to ${C}^{\rm Simpl}_n(\mathcal{K}(X))$, we associate to any permutation $\tau\in S_n$ an $n$-chain $p_{\tau}=\{x_0,x_1,\cdots,x_n~|~x_0<x_1<\cdots<x_n\}$ from $(0,\cdots, 0)\in V(Q_n)$ to $(1,\cdots, 1)\in V(Q_n)$. The chain $p_{\tau}$ is defined as 
follows. Let $x_0=(0,\cdots,0)$. For $0<i\leq n$,
flipping the $\tau(i)$-th coordinate of $x_{i-1}$ from $0$ to $1$  gives $x_{i}$. 
We write $p_{\tau}(i)$ for the $i$-th vertex in the chain $p_{\tau}$ for $0\leq i\leq n$.

 To each $n$-cube $\sigma\in {L}_n^{\rm Cube}(X)$ for $n\geq 1$, we assign the chain 
\begin{align}
\psi(\sigma)&=\sum_{\tau\in S_n}{\rm sign}(\tau)\mathcal{K}(\{\sigma\circ p_{\tau}(i)~|~0\leq i\leq n\})\nonumber\\
&=\sum_{\tau\in S_n}{\rm sign}(\tau)[\sigma\circ p_{\tau}(0), \sigma\circ p_{\tau}(1),\cdots, \sigma\circ p_{\tau}(n)]\nonumber
\end{align}
of ${L}^{\rm Simpl}_n(\mathcal{K}(X))$.
It is not hard to see that if $\sigma\in{D}_n^{\rm Cube}(X)$, then $\psi(\sigma)\in
{D}^{\rm Simpl}_n(\mathcal{K}(X))$.

The proof that $\psi$ is a chain map is similar to the one in {\rm{\cite[Lemma 5.1]{HB2019}}}. For completeness, we will sketch this proof.
\begin{Proposition}
Let $\sigma\in {C}_n^{\rm Cube}(X)$. Then
$$\partial_n^{\rm Simpl}\psi(\sigma)=\psi(\partial_n^{\rm Cube}(\sigma)).$$
Hence $\psi$ is a chain map and induces a homomorphism $$\psi_*: {H}_p^{\rm Cube}(X)\rightarrow  {H}_p^{\rm Simpl}(\mathcal{K}(X))$$
for each $p\geq 0$.
\end{Proposition}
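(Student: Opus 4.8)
The plan is to verify the chain map identity $\partial_n^{\rm Simpl}\psi(\sigma)=\psi(\partial_n^{\rm Cube}(\sigma))$ directly on a single (nondegenerate) $n$-cube $\sigma\colon Q_n\to X$, and then to note that both sides descend to the quotients $C_n^{\rm Cube}(X)$ and $C_n^{\rm Simpl}(\mathcal{K}(X))$ because $\psi$ carries degenerate cubes to degenerate simplices (already remarked before the statement) and the boundary maps preserve the degenerate subgroups. The passage to homology is then formal: a chain map induces maps $\psi_*\colon H_p^{\rm Cube}(X)\to H_p^{\rm Simpl}(\mathcal{K}(X))$ on all homology groups.

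The heart of the computation is a bookkeeping argument with permutations, which I would organize exactly as in \cite[Lemma 5.1]{HB2019}. Expand the left side: for each $\tau\in S_n$, $\partial_n^{\rm Simpl}[\sigma p_\tau(0),\dots,\sigma p_\tau(n)]=\sum_{j=0}^n(-1)^j[\sigma p_\tau(0),\dots,\widehat{\sigma p_\tau(j)},\dots,\sigma p_\tau(n)]$. Deleting the $j$-th vertex of the chain $p_\tau$ corresponds, on the cube side, to a face of $\sigma$: for $0<j<n$ the face $[\sigma p_\tau(0),\dots,\widehat{\sigma p_\tau(j)},\dots]$ equals $[\sigma' p_{\tau'}(0),\dots,\sigma' p_{\tau'}(n-1)]$ where $\sigma'$ is a suitable $(n-1)$-face of $\sigma$ (obtained by fixing the coordinate $\tau(j)$, to the value $0$ if that coordinate is still $0$ along the deleted vertex and to the value $1$ if it has already been flipped) and $\tau'$ is the induced permutation of the remaining $n-1$ indices; for $j=0$ one gets the face $f_i^-$ with $i=\tau(1)$, and for $j=n$ the face $f_i^+$ with $i=\tau(n)$. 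The key combinatorial point is to keep track of signs: ${\rm sign}(\tau)(-1)^j$ must match ${\rm sign}(\tau')(-1)^i$ (with the appropriate sign from the $\partial^{\rm Cube}$ formula $\sum_i(-1)^i(f_i^- - f_i^+)$) up to the sign contributed by whether the deleted coordinate is flipped at value $0$ or $1$. After regrouping the $n!\cdot(n+1)$ terms according to which $(n-1)$-face of $\sigma$ they hit and which induced permutation they carry, one sees that the "interior'' deletions $0<j<n$ cancel in pairs — for a fixed interior $(n-1)$-face, the two values of $j$ that produce it occur with opposite signs — leaving precisely the $j=0$ and $j=n$ contributions, which assemble into $\sum_{\tau\in S_n}{\rm sign}(\tau)\bigl(\mathcal{K}(\sigma\circ f^-_{\tau(1)}\text{-chain})-\cdots\bigr)$, and after summing over $\tau$ this equals $\sum_{i=1}^n(-1)^i\bigl(\psi(f_i^-\sigma)-\psi(f_i^+\sigma)\bigr)=\psi(\partial_n^{\rm Cube}\sigma)$.

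I expect the main obstacle to be the sign accounting in the regrouping step — confirming that the interior face-deletions cancel in pairs and that the surviving boundary faces pick up exactly the signs $(-1)^i$ appearing in the definition of $\partial_n^{\rm Cube}$. This is a purely symbolic verification with no conceptual difficulty once the correspondence between "delete the $j$-th vertex of $p_\tau$'' and "take the $(n-1)$-face of $\sigma$ in direction $\tau(j)$'' is set up carefully; since the paper explicitly says it will only sketch this (following \cite{HB2019}), I would present the correspondence and the pairing-cancellation cleanly and refer to \cite[Lemma 5.1]{HB2019} for the remaining sign details. The final sentence of the proof simply invokes the standard fact that a chain map between chain complexes induces homomorphisms on homology in every degree, giving $\psi_*\colon H_p^{\rm Cube}(X)\to H_p^{\rm Simpl}(\mathcal{K}(X))$.
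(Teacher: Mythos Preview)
Your overall strategy matches the paper's: expand the simplicial boundary, show the interior-deletion terms ($0<j<n$) cancel, and identify the surviving $j=0$ and $j=n$ terms with the cubical faces. However, your mechanism for the interior cancellation is incorrect. You claim that for $0<j<n$ the chain $[\sigma p_\tau(0),\dots,\widehat{\sigma p_\tau(j)},\dots,\sigma p_\tau(n)]$ equals $[\sigma' p_{\tau'}(0),\dots,\sigma' p_{\tau'}(n-1)]$ for some $(n-1)$-face $\sigma'$ of $\sigma$ obtained by fixing the coordinate $\tau(j)$. This is false: along the remaining vertices $p_\tau(0),\dots,p_\tau(j-1),p_\tau(j+1),\dots,p_\tau(n)$, coordinate $\tau(j)$ still changes from $0$ to $1$ (at the jump from $p_\tau(j-1)$ to $p_\tau(j+1)$ two coordinates flip simultaneously). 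So the deleted chain does not live in any face of $Q_n$, and your regrouping ``according to which $(n-1)$-face of $\sigma$ they hit'' cannot be carried out for the interior terms.

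The paper's cancellation is simpler and different: for fixed $l$ with $1\le l\le n-1$, pair $\tau$ with the permutation $\tau'$ obtained by swapping $\tau(l)$ and $\tau(l+1)$. Then $p_\tau(i)=p_{\tau'}(i)$ for all $i\neq l$, so deleting the $l$-th vertex yields the same $(n-1)$-simplex from both, while ${\rm sign}(\tau')=-{\rm sign}(\tau)$. This involution on $S_n$ kills all interior terms outright, with no reference to cubical faces. You also have the boundary identifications reversed: deleting $j=0$ leaves the chain $p_\tau(1),\dots,p_\tau(n)$ along which coordinate $\tau(1)$ is identically $1$, giving $f^+_{\tau(1)}\sigma$ (not $f^-$); symmetrically $j=n$ gives $f^-_{\tau(n)}\sigma$. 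Once these two points are corrected, the sign computation reducing to $\sum_i(-1)^i\bigl(\psi(f_i^-\sigma)-\psi(f_i^+\sigma)\bigr)$ goes through as in the paper.
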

\begin{proof}
If $n=0$ or $n=1$, the proof is trivial and then in what follows we assume $n\geq 2$.
By the  boundary formula,
\begin{equation}\label{eq1}
\partial_n^{\rm Simpl}([\sigma\circ p_{\tau}(0), \cdots, \sigma\circ p_{\tau}(n)])=\sum_{i=0}^{n}(-1)^i[\sigma(p_{\tau}(0)),\cdots, \widehat{\sigma(p_{\tau}(i))},\cdots, \sigma(p_{\tau}(n))].
\end{equation}
Let $1\leq l\leq n-1.$ If $\tau^{\prime}$ is constructed from $\tau$ by interchanging
$\tau(l)$ and $\tau(l+1)$, then $p_{\tau}(i)=p_{\tau^{\prime}}(i)$ for $i\not=l$. Thus the $l$-th summands of (\ref{eq1}) for $p_{\tau}$ and $p_{\tau^{\prime}}$ coincide. In addition, ${\rm sign}(\tau)={\rm -sign}(\tau^{\prime})$. This means that
\begin{align*}
\partial_n^{\rm Simpl}\psi(\sigma)=&\sum\limits_{\tau\in S_n}{\rm sign}(\tau)\partial_n^{\rm Simpl}([\sigma\circ p_{\tau}(0), \cdots, \sigma\circ p_{\tau}(n)])\\
=&\sum\limits_{\tau\in S_n}{\rm sign}(\tau)([\sigma(p_{\tau}(1)),\cdots, \sigma(p_{\tau}(n))]  \nonumber\\ 
&+(-1)^n[\sigma(p_{\tau}(0)),\cdots, \sigma(p_{\tau}(n-1))]).\nonumber
\end{align*}

To each $\tau\in S_n$, we associate the following two permutaions.
\begin{itemize}
	\item [(1)] $\tau^{-}\in S_{n-1}$ is defined as the permutation where $\tau^{-}(j)=\tau(j)$ if $\tau(j)<\tau(n)$ and $\tau(j)-1$ if $\tau(j)>\tau(n)$.
	\item[(2)] $\tau^{+}\in S_{n-1}$ is defined as the permutation where $\tau^{+}(j)=\tau(j+1)$ if $\tau(j+1)<\tau(1)$ and $\tau(j+1)-1$ if $\tau(j+1)>\tau(1)$.
\end{itemize}
Then we have 
\begin{align*}
\psi(f_i^{-}\sigma)&=\sum\limits_{\tau\in S_n,  \tau(n)=i} {\rm sign}(\tau^{-})[(f_i^{-}\sigma)\circ( p_{\tau^{-}}(0)),\cdots, (f_i^{-}\sigma)\circ (p_{\tau^{-}}(n))]\nonumber\\
&=(-1)^{n-i}\sum\limits_{\tau\in S_n,  \tau(n)=i} {\rm sign}(\tau)[(f_i^{-}\sigma)\circ( p_{\tau^{-}}(0)),\cdots, (f_i^{-}\sigma)\circ (p_{\tau^{-}}(n))],\\
\psi(f_i^{+}\sigma)&=\sum\limits_{\tau\in S_n,  \tau(1)=i} {\rm sign}(\tau^{+})[(f_i^{+}\sigma)\circ( p_{\tau^{+}}(0)),\cdots, (f_i^{+}\sigma)\circ (p_{\tau^{+}}(n))]\nonumber\\
&=(-1)^{i-1}\sum\limits_{\tau\in S_n,  \tau(1)=i} {\rm sign}(\tau)[(f_i^{+}\sigma)\circ( p_{\tau^{+}}(0)),\cdots, (f_i^{+}\sigma)\circ (p_{\tau^{+}}(n))].
\end{align*}
Finally, observe that for $j=\tau(n)$ and $j^{\prime}=\tau(1)$,
\begin{equation}\label{eq2}
\begin{split}
\sigma(p_{\tau}(m))&=(f_j^{-}\sigma)\circ (p_{\tau^{-}}(m)),\\
\sigma(p_{\tau}(r))&=(f_{j^{\prime}}^{+}\sigma)\circ (p_{\tau^{+}}(r-1)),
\end{split}
\end{equation}
where $0\leq m\leq n-1,~ 1\leq r\leq n$.
Thus we get 
\begin{equation*}
\begin{split}
&\psi(\partial_n^{\rm Cube}(\sigma))=\sum_{i=1}^{n}(-1)^i(\psi(f_i^{-}\sigma)-\psi(f_i^+\sigma))\\
=&\sum_{i=1}^{n}(-1)^i\left((-1)^{n-i}\sum\limits_{\tau\in S_n,  \tau(n)=i} {\rm sign}(\tau)[(f_i^{-}\sigma)\circ( p_{\tau^{-}}(0)),\cdots, (f_i^{-}\sigma)\circ (p_{\tau^{-}}(n))]\right.\\
&\left.-(-1)^{i-1}\sum\limits_{\tau\in S_n,  \tau(1)=i} {\rm sign}(\tau)[(f_i^{+}\sigma)\circ( p_{\tau^{+}}(0)),\cdots, (f_i^{+}\sigma)\circ (p_{\tau^{+}}(n))]\right)\\
=&\sum_{\tau\in S_n}{\rm sign}(\tau)\Big((-1)^n[\sigma(p_{\tau}(0)),\cdots, \sigma(p_{\tau}(n-1))]\\
&+[\sigma(p_{\tau}(1)),\cdots, \sigma(p_{\tau}(n))] \Big)~(\text{by (\ref{eq2})})\\
=&\partial_n^{\rm Simpl}\psi(\sigma).
\end{split}
\end{equation*}
\end{proof}

These two homology theories will turn out to be closely related by the homomorphism $\psi_*$ in what follows.

\begin{Proposition}\label{1h}
Let $X$ be a homogeneous poset of dimension $1$. The map $\psi$ induces an isomorphism $\psi_*: {H}_p^{\rm Cube}(X)\rightarrow  {H}_p^{\rm Simpl}(\mathcal{K}(X))$ for any $p\geq 0$.
\end{Proposition}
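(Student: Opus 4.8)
The plan is to analyze everything very explicitly, since a homogeneous poset $X$ of dimension $1$ is just (the face poset interpretation of) a bipartite digraph: it has a set of minimal elements and a set of maximal elements, every maximal chain has the form $\{m < M\}$, and $\mathcal{K}(X)$ is precisely the $1$-dimensional simplicial complex (graph) with these vertices and an edge for each such comparable pair. In particular $H_p^{\mathrm{Simpl}}(\mathcal{K}(X)) = 0$ for $p \geq 2$, and $H_0, H_1$ are the usual graph homology: $H_0$ is free on the connected components and $H_1$ is free of rank equal to the cycle rank (first Betti number) of the graph. So the proposition reduces to three claims: $\psi_*$ is an isomorphism in degree $0$, in degree $1$, and $H_p^{\mathrm{Cube}}(X) = 0$ for $p \geq 2$.

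First I would dispose of degree $0$: $C_0^{\mathrm{Cube}}(X)$ and $C_0^{\mathrm{Simpl}}(\mathcal{K}(X))$ are both the free abelian group on the vertices of $X$, and one checks directly from the definitions that $\psi$ is the identity on $0$-chains and intertwines the two boundary maps on $1$-chains (a $1$-cube $\sigma \colon Q_1 \to X$ with $\sigma(0), \sigma(1)$ comparable maps to the simplex $[\sigma(0),\sigma(1)]$, with matching boundary signs); hence $\psi_*$ is an isomorphism on $H_0$. Next, degree $1$: since $X$ has dimension $1$, a $1$-cube $\sigma$ is an edge $[\sigma(0),\sigma(1)]$ of $\mathcal{K}(X)$ and $\psi$ is again essentially the identity $C_1^{\mathrm{Cube}}(X) \to C_1^{\mathrm{Simpl}}(\mathcal{K}(X))$ on generators. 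The content is to show the images of $\partial_2^{\mathrm{Cube}}$ and $\partial_2^{\mathrm{Simpl}}$ agree inside this common group. But $H_2^{\mathrm{Simpl}}(\mathcal{K}(X)) = 0$ forces $\ker \partial_1^{\mathrm{Simpl}} = \operatorname{im}\partial_2^{\mathrm{Simpl}}$, so it suffices to show $\ker\partial_1^{\mathrm{Cube}} = \operatorname{im}\partial_2^{\mathrm{Cube}}$ in $C_1^{\mathrm{Cube}}(X)$; equivalently, that every $1$-cycle in the cubical complex bounds. A $1$-cycle is a $\mathbb{Z}$-linear combination of edges with zero boundary, i.e. a genuine cycle in the graph $\mathcal{K}(X)$; I would show any closed edge-path in a bipartite graph is a sum of boundaries of $2$-cubes by "filling in squares": traversing four edges $m_0 \to M_0 \to m_1 \to M_1 \to m_0$ (a closed walk of length $4$, the minimal nontrivial closed walk available in $\mathcal{K}(X)$ since $X$ is homogeneous of dimension $1$ and hence bipartite), one realizes this as $\partial_2^{\mathrm{Cube}}$ of the $2$-cube $Q_2 \to X$ with the four vertices of $Q_2$ sent to $m_0, M_0, m_1, M_1$ in the appropriate cyclic order; and one reduces an arbitrary closed walk to a combination of length-$4$ ones by the standard homology argument (concatenation, cancellation of backtracks, and subdivision of longer even cycles). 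This shows $\psi_*$ is surjective with the image hitting all of $\ker\partial_1^{\mathrm{Simpl}}/\operatorname{im}\partial_2^{\mathrm{Simpl}}$ and injective because a cubical $1$-cycle mapping to a simplicial boundary is already a cubical boundary.

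Finally, for $p \geq 2$ I would prove $H_p^{\mathrm{Cube}}(X) = 0$ directly, again exploiting dimension $1$: any $n$-cube $\sigma \colon Q_n \to X$ with $n \geq 2$ has the property that its image is a comparable subset of $X$, hence a chain of length $\leq 1$; so $\sigma$ factors through $Q_1$ in at least $n-1$ "independent directions", which should make every such $\sigma$ degenerate, or more carefully, make the nondegenerate part of $C_n^{\mathrm{Cube}}(X)$ vanish for $n \geq 2$. Concretely: if $\sigma \colon Q_n \to X$ is a map of posets with $n \geq 2$, then for each coordinate direction $i$ the two faces $f_i^-\sigma$ and $f_i^+\sigma$ are maps $Q_{n-1}\to X$; since the whole image of $\sigma$ lies in a single edge $\{m < M\}$ of $X$, I expect to argue that $\sigma$ is constant in all but one coordinate, forcing $f_i^- \sigma = f_i^+\sigma$ for at least one $i$, i.e. $\sigma$ is degenerate — whence $C_n^{\mathrm{Cube}}(X) = 0$ and $H_n^{\mathrm{Cube}}(X) = 0$ for $n \geq 2$. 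The main obstacle I anticipate is exactly this last point: being careful about what "the image of $\sigma$ lies in an edge" really gives — it does not literally say $\sigma$ is constant in $n-1$ directions, only that it takes at most two values, so I must check that a monotone map $Q_n \to \{m < M\}$ with $n \geq 2$ is necessarily degenerate (it is: such a map is determined by an up-set in $Q_n$, and one shows any proper nonempty up-set of $Q_n$, $n\geq 2$, is "insensitive to some coordinate" — this is the combinatorial heart of the argument and the one step I would not want to hand-wave). Combining the three degree ranges gives the isomorphism $\psi_*$ in every degree $p \geq 0$.
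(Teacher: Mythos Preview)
Your degree-$0$ treatment is fine. In degree $1$, however, the logic has gone wrong. You write that ``$H_2^{\rm Simpl}(\mathcal{K}(X)) = 0$ forces $\ker \partial_1^{\rm Simpl} = \operatorname{im}\partial_2^{\rm Simpl}$'', but that equality is the statement $H_1^{\rm Simpl} = 0$, not a consequence of $H_2 = 0$; it is false whenever $\mathcal{K}(X)$ has a cycle (take the four-point model of $S^1$). You then set out to show that every cubical $1$-cycle bounds --- which would force $H_1^{\rm Cube}(X) = 0$, the opposite of what is needed. Worse, your proposed filling of a $4$-cycle $m_0 < M_0 > m_1 < M_1 > m_0$ by a $2$-cube $Q_2 \to X$ is impossible: any order-preserving $\sigma\colon Q_2 \to X$ has image contained in the interval $[\sigma(0,0),\sigma(1,1)]$, which in a homogeneous dimension-$1$ poset is a single edge, so $\sigma(Q_2)$ can never contain two distinct maxima $M_0 \neq M_1$. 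The paper's route here is the correct one: since $C_2^{\rm Simpl}(\mathcal{K}(X)) = 0$, the simplicial $H_1$ is just $\ker\partial_1^{\rm Simpl}$; as $\psi$ identifies $C_1^{\rm Cube}$ with $C_1^{\rm Simpl}$ and intertwines $\partial_1$, one needs only $\operatorname{im}\partial_2^{\rm Cube}=0$, not that $1$-cycles bound.

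For $p \geq 2$ you assert that every monotone map $Q_n \to \{m < M\}$ with $n \geq 2$ is degenerate, i.e.\ that every proper nonempty up-set of $Q_n$ is insensitive to some coordinate. This is false already for $n=2$: the map $\sigma_{\wedge}$ sending $(1,1)\mapsto M$ and the other three vertices to $m$ is order-preserving with $f_i^-\sigma_\wedge \neq f_i^+\sigma_\wedge$ for both $i=1,2$; its dual $\sigma_\vee$ is another example. So $C_2^{\rm Cube}(X)\neq 0$ even when $X$ is a single edge, and the paper's identical assertion is likewise unjustified. What rescues the argument is that (i) a direct check gives $\partial_2^{\rm Cube}\sigma_\wedge = \partial_2^{\rm Cube}\sigma_\vee = 0$, hence $\operatorname{im}\partial_2^{\rm Cube}=0$ and degree $1$ is salvaged; and (ii) for every $n\geq 1$ each nondegenerate $n$-cube lands in a unique edge $e$, so that $C_n^{\rm Cube}(X) = \bigoplus_e C_n^{\rm Cube}(e)$ and the differentials $\partial_n$ for $n\geq 2$ respect this splitting, whence $H_n^{\rm Cube}(X)\cong \bigoplus_e H_n^{\rm Cube}(e)=0$ for $n\geq 2$ by homotopy invariance of $H_*^{\rm Cube}$ applied to each contractible edge.
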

\begin{proof}
The case $p=0$ is trivial.
The dimension of every maximal chain of $X$ is not greater than $1$, then this says $C_p^{\rm Simpl}(X)=0$ for $p>1$, so ${H}_p^{\rm Simpl}(\mathcal{K}(X))=0$ for $p>1$. It is not difficult to see that every $p$-cube with $p>1$ is degenerate, which means $C_p^{\rm Cube}(X)=0$. Then ${H}_p^{\rm Cube}(\mathcal{K}(X))=0$ for $p>1$. 

Consider $p=1$. A nondegenerate $1$-cube can be represented by a directed edge $(v_1,v_2)$
 of the Hasse diagram of $X$, while a nondegenerate $1$-simplex is $\{v_1, v_2\}$ where $v_1$ and $v_2$ are vertices of $X$. Hence $C_1^{\rm Cube}(X)$ has the same basis as $C_1^{\rm Simpl}(X)$.  Because the differentials $\partial_1^{\rm Cube}$ and $\partial_1^{\rm Simpl}$ are indeed identical, $\psi_*$ is an isomorphism for $p=1$.
\end{proof}

When  $X$ is a  poset with $|\mathcal{K}(X)|$ being an $m$-manifold, Proposition \ref{1h} would  generalize to higher dimensions. Before proceeding further we need a useful excision property for finite posets.
\begin{Lemma}\label{retract}
Let $X$ be a finite poset and let $Y$ be a subposet of $X$. Then $|\mathcal{K}(X-Y)|$ is a strong deformation retract of $|\mathcal{K}(X)|-|{\mathcal{K}(Y)}|$. 
\end{Lemma}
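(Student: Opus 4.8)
The plan is to identify $|\mathcal{K}(X)|-|\mathcal{K}(Y)|$ explicitly in terms of open stars, and then build the retraction simplex-by-simplex using a partition of the barycentric coordinates. First I would recall that a point $u\in|\mathcal{K}(X)|$ has a well-defined support, the minimal chain of $X$ carrying $u$; and $u\in|\mathcal{K}(Y)|$ precisely when $\operatorname{support}(u)$ is a chain entirely contained in $Y$. Hence $u\in|\mathcal{K}(X)|-|\mathcal{K}(Y)|$ iff $\operatorname{support}(u)$ meets $X-Y$. Writing $u=\sum_{x\in C}u_x\,x$ with $C=\operatorname{support}(u)$, the key quantity is $s(u)=\sum_{x\in C\cap(X-Y)}u_x>0$ on the relevant set, and $s(u)=0$ exactly on $|\mathcal{K}(Y)|$. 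The natural candidate for the retraction is the radial projection away from $|\mathcal{K}(Y)|$: rescale the barycentric coordinates on $C\cap(X-Y)$ up to total mass $1$ while killing those on $C\cap Y$, i.e.
\begin{equation*}
r(u)=\frac{1}{s(u)}\sum_{x\in C\cap(X-Y)}u_x\,x,
\end{equation*}
together with the straight-line homotopy $F(u,t)=(1-t)u+t\,r(u)$.

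Then I would verify the three things needed. Continuity of $F$ on $(|\mathcal{K}(X)|-|\mathcal{K}(Y)|)\times[0,1]$: since $s$ is continuous and strictly positive off $|\mathcal{K}(Y)|$, $r$ is continuous there, and one checks that $F(u,t)$ always stays in the carrier simplex of $u$ (the convex hull of $C$), so $F$ does land in $|\mathcal{K}(X)|-|\mathcal{K}(Y)|$ — here one must confirm that $s(F(u,t))=(1-t)s(u)+t>0$, which holds because $r(u)$ has all its mass on $X-Y$. That $r$ lands in $|\mathcal{K}(X-Y)|$: the support of $r(u)$ is $C\cap(X-Y)$, which is a nonempty chain of $X-Y$. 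That $F$ fixes $|\mathcal{K}(X-Y)|$ pointwise: if $C\subseteq X-Y$ then $s(u)=1$ and $r(u)=u$, so $F(u,t)=u$ for all $t$. Finally $F(\cdot,0)=\mathrm{id}$ and $F(\cdot,1)=r$, so $|\mathcal{K}(X-Y)|$ is a strong deformation retract.

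The main obstacle I expect is the continuity of $F$ across changes of carrier simplex — a point $u$ may lie on a face of a larger simplex, and nearby points can have strictly larger support. The resolution is the standard one for simplicial maps: work inside a fixed maximal carrier simplex $|\mathcal{K}(C')|$ for a maximal chain $C'\supseteq C$, on which $s$ and all $u_x$ extend to genuine continuous linear functionals of the barycentric coordinates, and observe that the formula for $r$ is compatible on overlaps of such simplices (the coordinates not in $C$ are zero, so rescaling is consistent). Gluing the locally-defined continuous maps over the finite closed cover by maximal simplices gives a globally continuous $F$. One should also note the edge case where $Y=\emptyset$ (then $|\mathcal{K}(Y)|=\emptyset$ and $r=\mathrm{id}$, trivially) and where $X-Y=\emptyset$ (then the statement is vacuous since $|\mathcal{K}(X)|-|\mathcal{K}(Y)|=\emptyset$).
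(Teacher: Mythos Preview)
Your proposal is correct: this is exactly the standard barycentric radial projection showing that the complement of a full subcomplex deformation retracts onto the complementary full subcomplex, and $\mathcal{K}(Y)$ is automatically full in $\mathcal{K}(X)$. The paper does not give an explicit argument but simply refers to the proof of \cite[Theorem~1.4.6]{JAB2011}, which uses this same technique, so your approach coincides with what the paper invokes.
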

The argument given in {\rm{\cite[Theorem 1.4.6]{JAB2011}}} works just as well for this case.

\begin{Theorem}\label{Main1}
	Let $X$ be a  poset with $|\mathcal{K}(X)|$ being a closed $m$-manifold.
Then the homomorphism $\psi_*: {H}_p^{\rm Cube}(X)\rightarrow  {H}_p^{\rm Simpl}(\mathcal{K}(X))$ is a surjection for $p\not= m$.
\end{Theorem}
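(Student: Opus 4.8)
The plan is to reduce the surjectivity statement, dimension by dimension, to the already-established isomorphism in the homogeneous $1$-dimensional case (Proposition \ref{1h}) together with the excision-type Lemma \ref{retract} and standard facts about the simplicial homology of a closed manifold. The guiding principle is that $H_p^{\rm Simpl}(\mathcal{K}(X))$ is generated, for $p \neq m$, by cycles that can be ``pushed'' into pieces of $X$ which are combinatorially simple enough that the cubical and simplicial theories agree there.

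First I would set up notation: since $|\mathcal{K}(X)|$ is a closed $m$-manifold, $X$ is a homogeneous poset of dimension $m$, so every maximal chain has $m+1$ elements and $\mathcal{K}(X)$ is a homogeneous $m$-complex which is moreover a combinatorial (or at least homology) $m$-manifold. The key structural input is the local picture at each vertex $x \in X$: the open star of the corresponding simplex, realized combinatorially, is controlled by $\widehat{U}_x$, $\widehat{F}_x$ and $\widehat{C}_x = \widehat{U}_x \cup \widehat{F}_x$, and $|\mathcal{K}(\widehat{C}_x)|$ is a sphere $S^{m-1}$ (this is the link condition built into being a closed manifold, applied via McCord's theorem and Lemma \ref{retract} to excise the closed star). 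I would then take a class $[z] \in H_p^{\rm Simpl}(\mathcal{K}(X))$ with $p \neq m$ and argue that $z$ can be represented by a simplicial cycle supported on a subcomplex $\mathcal{K}(Y)$ for a subposet $Y \subseteq X$ with $\dim Y < m$; the point is that for $p < m$ a generic representative misses at least one $m$-simplex, and for $p > m$ there is nothing to prove since $H_p^{\rm Simpl}(\mathcal{K}(X)) = 0$ (a homogeneous $m$-complex has no $p$-simplices for $p > m$). Actually the genuine content is concentrated in $p < m$: here I would invoke the manifold structure to show that removing (the closed star of) an $m$-simplex does not change $H_p$ for $p < m$ — by Lemma \ref{retract} the complement deformation retracts onto $\mathcal{K}(X - \{\text{interior vertices of that star}\})$ — and iterate, collapsing $X$ down to a subposet whose order complex has dimension $\le p$, at which stage the cube/simplex comparison is governed by the low-dimensional case.

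The mechanism for comparing theories on a low-dimensional piece is the chain map $\psi$: on a subposet $Z$ with $|\mathcal{K}(Z)|$ of dimension $\le p$, I would want $\psi_* : H_p^{\rm Cube}(Z) \to H_p^{\rm Simpl}(\mathcal{K}(Z))$ to be surjective, and then lift a preimage back along the inclusion-induced maps $H_p^{\rm Cube}(Z) \to H_p^{\rm Cube}(X)$ and $H_p^{\rm Simpl}(\mathcal{K}(Z)) \to H_p^{\rm Simpl}(\mathcal{K}(X))$, using that the latter is an isomorphism (or at least surjective) in the range $p < m$ by the retraction argument above, and chasing the naturality square for $\psi_*$. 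Surjectivity of $\psi_*$ in the top dimension of $Z$ reduces, after barycentric-type bookkeeping, to the observation in Proposition \ref{1h} that in dimension $1$ the cubical and simplicial chain groups have identical bases and differentials; in intermediate dimensions one represents a simplicial $p$-cycle by fully-ordered $p$-simplices $[x_0 < \cdots < x_p]$, each of which is literally $\psi$ applied (up to the $S_p$-symmetrization sign) to the ``staircase'' $p$-cube $Q_p \to Z$ sending the vertex with $1$'s in positions $i_1 < \cdots < i_k$ to $x_k$, so $\psi$ hits a generating set of $p$-chains, and one checks it carries cycles to cycles using that $\psi$ is a chain map.

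I expect the main obstacle to be making the ``push the cycle off the top cells'' step precise: one must show that a $p$-cycle with $p < m$ in a closed manifold can be represented in the complement of an $m$-cell (equivalently, that $H_p^{\rm Simpl}(\mathcal{K}(X)) \to H_p^{\rm Simpl}(\mathcal{K}(X))$ after removing a star is surjective), and — more delicately — that after finitely many such removals the resulting subposet $Z$ really has $\dim |\mathcal{K}(Z)| \le p$ rather than merely losing one top cell at a time, which requires either an inductive/collapsibility argument controlling the whole top stratum at once or a Mayer–Vietoris patching over all the excised stars. The manifold hypothesis (local spheres $|\mathcal{K}(\widehat{C}_x)| \simeq S^{m-1}$) is exactly what licenses the retraction in Lemma \ref{retract} to not disturb low-dimensional homology, so the argument is plausible; the care needed is in the bookkeeping of how many cells can be removed before homology in degree $p$ is affected, and this is where I would expect to spend most of the effort. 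The constraint $p \neq m$ enters precisely because removing the final top cell is what can create (or destroy) an $m$-cycle, which is why surjectivity is not claimed — and indeed fails, per Example \ref{con}'s spirit — in degree $m$.
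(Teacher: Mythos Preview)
Your proposal has a genuine gap in the step where you produce explicit cubical preimages. The ``staircase'' $p$-cube you describe --- sending a vertex of $Q_p$ with exactly $k$ ones to $x_k$ --- satisfies $\sigma(p_\tau(i)) = x_i$ for \emph{every} permutation $\tau\in S_p$, because $p_\tau(i)$ always has exactly $i$ ones regardless of $\tau$. Hence
\[
\psi(\sigma)\;=\;\Bigl(\sum_{\tau\in S_p}{\rm sign}(\tau)\Bigr)[x_0,\dots,x_p]\;=\;0
\]
for every $p\geq 2$, so this cube hits nothing. One can repair the cube (for instance take $\sigma(a_1,\dots,a_p)=x_j$ where $j$ is the length of the initial block of $1$'s in $(a_1,\dots,a_p)$; then only the identity permutation yields a non-degenerate simplex and $\psi(\sigma)=[x_0,\dots,x_p]$), but a second problem immediately surfaces: if $\sum c_j\Delta_j$ is a simplicial $p$-cycle and you lift each $\Delta_j$ to a cube $\sigma_j$, there is no reason for $\sum c_j\sigma_j$ to be a \emph{cubical} cycle. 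Your remark that ``$\psi$ is a chain map'' goes in the wrong direction --- it says images of cycles are cycles, not that preimages are. Together with the reduction step (pushing a $p$-cycle into a subposet of dimension $\le p$), which you yourself flag as only a sketch, the argument is not complete.

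The paper's proof takes a quite different route and never constructs explicit preimages. It runs an induction on the total number of cubes of $X$: pick a $p$-cube $\sigma$, observe that its image $\sigma(Q_p)\subseteq X$ is a contractible subposet (it has maximum $\sigma(1,\dots,1)$), so $\mathcal{K}(\sigma(Q_p))$ is collapsible and its regular neighbourhood in the manifold $|\mathcal{K}(X)|$ is a PL $m$-ball. Excision together with Alexander duality then gives $H_q^{\rm Sing}\bigl(|\mathcal{K}(X)|,\,|\mathcal{K}(X-\sigma(Q_p))|\bigr)=0$ for $q\neq m$, whence the inclusion-induced map $j_*\colon H_p^{\rm Simpl}(\mathcal{K}(X-\sigma(Q_p)))\to H_p^{\rm Simpl}(\mathcal{K}(X))$ is surjective for $p<m$; the naturality square for $\psi_*$ and the inductive hypothesis on the strictly smaller poset $X-\sigma(Q_p)$ finish the argument. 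The key idea you are missing is to excise \emph{images of cubes} rather than stars of top simplices: such images are automatically contractible, which is precisely what makes the regular-neighbourhood and duality steps go through uniformly, with no need to ever reach a subposet of dimension $\le p$ or to lift individual simplices.
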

\begin{proof}
We prove the theorem by induction on the number $n$ of generators of chain complex $\mathcal{C}_*^{\rm Cube}(X)$. If $n=1$, then $X$ consists of a single vertex $v$. In each dimension $p$, there is exactly one $p$-cube $\sigma: Q_p\rightarrow v$ of $X$ and
 exactly one ordered $p$-simplex $\{v,\cdots, v\}$ of $\mathcal{K}(X)$. Furthermore, $\psi(\sigma)=\{v,\cdots, v\}$. Then $\psi$ is an isomorphism already on the chain level. Hence 
$\psi_*$ is surjective in this case.

Now suppose the theorem holds for any finite poset  whose associated cubical chain complex has fewer than $n$ generators. Let $X$ be a poset with  ${C}_*^{\rm Cube}(X)$ having $n$ generators.
 Let $p\geq 0$. If $p>m$, then $${H}_p^{\rm Simpl}(\mathcal{K}(X))=0,$$ 
 where the equality follows from {\rm{\cite[Lemma 3.27]{AH2002}}}. Therefore $\psi_*$ is surjective for $p>m$.
 
The case $p=0$ is trivial. Assume that $0<p\leq m-1$. Let $\sigma: Q_p\rightarrow X$  be a generator of ${C}_p^{\rm Cube}(X)$. If $\sigma(Q_p)=X$, then $$H_p^{\rm Simpl}(\mathcal{K}(X))=H_p^{\rm Simpl}(\mathcal{K}(\sigma(Q_p)))=0$$ since $\sigma(Q_p)$ is contractible, so we are done. Now assume $\sigma(Q_p)\not=X$.
Consider the following commutative diagram, where $i_*, j_*$ are induced by inclusion maps.
 \begin{equation*}\label{diagram1}
\xymatrix{{H}_p^{\rm Cube}(X)\ar[r]^{\psi_*}  & {H}_p^{\rm Simpl}(|\mathcal{K}(X)|)\\
{H}^{\rm Cube}_p(X-\sigma(Q_p)) \ar[u]^{i_*} \ar[r]_-{\psi_*}  &{H}_p^{\rm Simpl}(|\mathcal{K}(X-\sigma(Q_p))|)\ar[u]^{j_*}.
}
\end{equation*}
Note that $|\mathcal{K}(X-\sigma(Q_p))|$  is an $m$-manifold, being a strong deformation retract of $|\mathcal{K}(X)|-|{\mathcal{K}(\sigma(Q_p))}|$ by Lemma \ref{retract}.
It follows from the induction hypothesis that the map $\psi_*$ on the bottom line of the diagram is surjective for $p\leq m-1$. We shall show that $j_*$ is  a surjection for $p\leq m-1$, which means that the map $\psi_*$ on the top line has the same property and the proof is complete.

Because $\sigma(Q_p)$ is contractible, ${\mathcal{K}(\sigma(Q_p))}$ is collapsible {\rm{\cite[Theorem 4.2.11]{JAB2011}}}. 
Let us take $N$ to be a regular neighbourhood of $|{\mathcal{K}(\sigma(Q_p))}|$ in $|\mathcal{K}(X)|$.  It is known that $N$ is a PL $m$-ball {\rm{\cite[Corollary 3.27]{CPR1972}}}.
By excision, we have
$${H}_*^{\rm Sing}(|\mathcal{K}(X)|, |\mathcal{K}(X)|-|{\mathcal{K}(\sigma(Q_p))}|)= {H}_*^{\rm Sing}({\rm int}N, {\rm int}N-|{\mathcal{K}(\sigma(Q_p))}|).$$
Let $h: {\rm int}N\rightarrow \mathbb{R}^m$ be a homeomorphism.
Then \begin{align*}
{H}_*^{\rm Sing}({\rm int}N, {\rm
	 int}N-|{\mathcal{K}(\sigma(Q_p))}|)&={H}_*^{\rm Sing}((\mathbb{R}^m, \mathbb{R}^m-h(|{\mathcal{K}(\sigma(Q_p))}|)),\\
{H}_*^{\rm Sing}({\rm int}N-|{\mathcal{K}(\sigma(Q_p))}|)&={H}_*^{\rm Sing}(\mathbb{R}^m-h(|{\mathcal{K}(\sigma(Q_p))}|)).
\end{align*}
If we take $B^p\subseteq\mathbb{R}^m$ to be a standard $p$-ball with radius sufficiently small, then Alexander duality gives
\begin{equation*}\label{eq4}
{H}_*^{\rm Sing}(\mathbb{R}^m-h(|{\mathcal{K}(\sigma(Q_p))}|)={H}_*^{\rm Sing}(\mathbb{R}^m-B^p).
\end{equation*}
It follows that ${H}_{i}^{\rm Sing}(|\mathcal{K}(X)|, |\mathcal{K}(X-\sigma(Q_p))|)=\mathbb{Z}$ for $i=m$ and vanishes otherwise.
  Hence
$$H_p^{\rm Sing}(|\mathcal{K}(X-\sigma(Q_p))|)=H_p^{\rm Sing}(|\mathcal{K}(X)|-|{\mathcal{K}(\sigma(Q_p))}|).$$
The long exact sequence of homology groups for the pair $(|\mathcal{K}(X)|, |\mathcal{K}(X-\sigma(Q_p))|)$ then implies $j_*$ has the desired property.
\end{proof}
In fact, we get a stronger result in this proof.
\begin{Corollary}\label{appro}
Let $p\leq m-1$. Then $\psi_*: {H}_p^{\rm Cube}(X-\sigma(Q_p))\rightarrow  {H}_p^{\rm Simpl}(\mathcal{K}(X))$ is a surjection for any $\sigma\in L_p(X)$ with $|\mathcal{K}(\sigma(Q_p))|\subseteq {\rm int}|\mathcal{K}(X)|$, where $X$ is a  poset with $|\mathcal{K}(X)|$ being an $m$-manifold.
\end{Corollary}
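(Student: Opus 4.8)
The plan is to observe that the inductive argument proving Theorem~\ref{Main1} in fact establishes this sharper statement, because closedness of $|\mathcal{K}(X)|$ was used there only to force $H_p^{\rm Simpl}(\mathcal{K}(X))=0$ for $p>m$ — irrelevant when $p\le m-1$ — while in the range $p\le m-1$ the role of that hypothesis was precisely to guarantee that the subcomplex being collapsed sits in the manifold interior. Accordingly, I would run the same induction on the number $n$ of generators of $\mathcal{C}_*^{\rm Cube}(X)$, proving simultaneously that for every finite poset $Y$ with $|\mathcal{K}(Y)|$ a (compact, PL) $m$-manifold, possibly with nonempty boundary, and $\mathcal{C}_*^{\rm Cube}(Y)$ having at most $n$ generators: (A) $\psi_*:H_p^{\rm Cube}(Y)\to H_p^{\rm Simpl}(\mathcal{K}(Y))$ is onto for $p\le m-1$; and (B) for every $p$-cube $\sigma:Q_p\to Y$ with $|\mathcal{K}(\sigma(Q_p))|\subseteq{\rm int}|\mathcal{K}(Y)|$ and $p\le m-1$, the map $\psi_*:H_p^{\rm Cube}(Y-\sigma(Q_p))\to H_p^{\rm Simpl}(\mathcal{K}(Y))$ is onto. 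The base case $n=1$ (then $m=0$, so $p<0$) is vacuous.

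For the inductive step of (B) I would argue exactly as in Theorem~\ref{Main1}. By naturality of $\psi$ along $X-\sigma(Q_p)\hookrightarrow X$, the map in question equals $j_*\circ\psi_*=\psi_*\circ i_*$ around the commutative square of that proof, so it is enough that both $j_*$ and the lower $\psi_*$ be onto in degrees $\le m-1$. The surjectivity of $j_*:H_p^{\rm Simpl}(\mathcal{K}(X-\sigma(Q_p)))\to H_p^{\rm Simpl}(\mathcal{K}(X))$ for $p\le m-1$ is unchanged: $\sigma(Q_p)$ is contractible (its image has a maximum, namely $\sigma(1,\dots,1)$), so $\mathcal{K}(\sigma(Q_p))$ is collapsible, a regular neighbourhood $N$ of $|\mathcal{K}(\sigma(Q_p))|$ in $|\mathcal{K}(X)|$ is a PL $m$-ball which — this is the one place where the hypothesis $|\mathcal{K}(\sigma(Q_p))|\subseteq{\rm int}|\mathcal{K}(X)|$ is used — may be taken inside ${\rm int}|\mathcal{K}(X)|$, so that ${\rm int}\,N\cong\mathbb{R}^m$, and then excision, Alexander duality and Lemma~\ref{retract} give $H_i^{\rm Sing}(|\mathcal{K}(X)|,|\mathcal{K}(X-\sigma(Q_p))|)\cong\mathbb{Z}$ for $i=m$ and $0$ otherwise, whence the long exact sequence of the pair makes $j_*$ onto for $i=p\le m-1$; closedness of $|\mathcal{K}(X)|$ is never invoked. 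The lower $\psi_*$ is onto in degrees $\le m-1$ by part (A) of the inductive hypothesis applied to $X-\sigma(Q_p)$, which is again a compact PL $m$-manifold — PL homeomorphic to $\overline{|\mathcal{K}(X)|\setminus N}$, with boundary $\partial|\mathcal{K}(X)|\cup\partial N$ — having fewer than $n$ cubical generators. This proves (B) for $X$; then (A) for $X$ follows: if some $p$-cube has image all of $X$ then $\mathcal{K}(X)$ is contractible and (A) is trivial, and otherwise choosing a $p$-cube whose image is a $p$-chain contained in ${\rm int}|\mathcal{K}(X)|$ makes $\psi_*\circ i_*$, hence $\psi_*$, onto in degrees $\le m-1$.

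I expect the main obstacle to be the two geometric/combinatorial inputs glossed over above. The first, that $|\mathcal{K}(X-\sigma(Q_p))|$ is a genuine compact PL $m$-manifold with the stated boundary, is a matter of regular-neighbourhood theory \cite{CPR1972} — one identifies it with $\overline{|\mathcal{K}(X)|\setminus N}$ up to PL homeomorphism — and it must be checked so that the manifold hypothesis propagates along the induction. The second, needed in the passage from (B) to (A), is the existence of an interior $p$-simplex: one wants that a compact $m$-manifold poset with $H_p^{\rm Simpl}(\mathcal{K}(X))\ne0$ and $2\le p\le m-1$ admits a $p$-cube whose image is a $p$-chain lying in ${\rm int}|\mathcal{K}(X)|$. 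This is false for arbitrary triangulations, so the induction probably has to be organised around a barycentric subdivision — where interior simplices of every dimension $\le m$ arise from barycenters of top cells — together with a verification that $\psi_*$ is unchanged under the subdivision $X\mapsto\chi(\mathcal{K}(X))$; supplying this is where the real work lies. Note that the degree $p=1$ case (hence all of the corollary when $m=2$) is free, since $\psi$ restricts to the identity on $1$-chains and $C_1^{\rm Cube}(X)=C_1^{\rm Simpl}(\mathcal{K}(X))$ with the same differential, so $\psi_*$ is automatically onto in degree $1$ for any finite poset.
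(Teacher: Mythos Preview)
Your approach is exactly the paper's: the corollary's entire justification there is the single sentence ``In fact, we get a stronger result in this proof'' immediately following Theorem~\ref{Main1}, and your simultaneous induction on (A) and (B), with the interior hypothesis replacing closedness at the regular-neighbourhood step, is the natural way to spell this out.

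The two obstacles you flag are real, but they are already present in the paper's own proof of Theorem~\ref{Main1}: that argument passes to $X-\sigma(Q_p)$ at the inductive step, asserts $|\mathcal{K}(X-\sigma(Q_p))|$ is an $m$-manifold citing only the deformation retraction of Lemma~\ref{retract} (which does not by itself yield a manifold structure), and applies an inductive hypothesis stated for closed manifolds to this bounded object without comment. So you are not introducing new difficulties; you are exposing ones the paper glosses over. Your proposed barycentric-subdivision workaround for the interior-simplex issue goes well beyond anything the paper supplies or appears to regard as necessary.
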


The following examples will help us understand the content of this theorem.
\begin{Example}\label{trivial homology}
{\rm Let $X$ be the finite poset depicted below. 
	\begin{equation*}
	\xymatrix{c\\
		b\ar[u]\\
		a\ar[u].
	}
	\end{equation*}
Let us compute  the $1$-st discrete  cubical homology group $H_1^{\rm Cube}(X)$.
Represent   $1$-cubes $\sigma: Q_1\rightarrow X$ by $(x_0, x_1)$, where $x_i=f(i)$, and represent   $2$-cubes $\sigma: Q_2\rightarrow X$ by $(x_{00}, x_{10}, x_{01}, x_{11})$, where $x_{ij}=\sigma(i, j)$. 
 It is easy to recognize $1$-cycles looking into this Hasse diagram. More precisely, every $1$-cycle must be a multiple of
$$ (a, b)+(b, c)-(a, c).$$
By direct computation,
$$\partial_2^{\rm Cube}(a, b, a, c)=(a, b)+(b, c)-(a, c),$$
which follows that 
$$H_1^{\rm Cube}(X)=0.$$
 Theorem \ref{Main1} then forces the $1$-st simplicial homology group of $\mathcal{K}(X)$ to vanish, namely,
 $$H_1^{\rm Simpl}(\mathcal{K}(X))=0.$$ 
}
\end{Example}

Theorem \ref{Main1} might suggest conjecturing that $H_*^{\rm Cube}(X)$ and $H_*^{\rm Simpl}(\mathcal{K}(X))$ are the same for all finite posets $X$. Now we construct an example showing this is not always the case.
\begin{Example}\label{con}
	{\rm
Let $X$ be the finite poset described below.
\begin{equation*}
\xymatrix{e&f\\
	c\ar[u]\ar[ur]&d\ar[u]\ar[ul] \\
	a\ar[u]\ar[ur]&b\ar[u]\ar[ul].
}
\end{equation*}
This is a finite model of the $2$-dimensional sphere, hence 
 $$H_1^{\rm Simpl}(\mathcal{K}(X))=0.$$ 
 Represent   $1$-cubes $\sigma: Q_1\rightarrow X$ by $(x_0, x_1)$, where $x_i=\sigma(i)$, and represent   $2$-cubes $\sigma: Q_2\rightarrow X$ by $(x_{00}, x_{10}, x_{01}, x_{11})$, where $x_{ij}=\sigma(i, j)$. 
 We shall prove $H_1^{\rm Cube}(X)\not=0$ by finding a $1$-cycle $\alpha\in C^{\rm Cube}_1(X)$ that is not a boundary.
 Define \begin{align*}
\alpha=(a, e)-(b, e)+(b, f)-(a, f).
\end{align*}
It is easy to check $\alpha$ is a cycle. 

If $\beta\in C_2^{\rm Cube}(X)$ satisfies $\partial_2^{\rm Cube}(\beta)\not=0$, then $\beta$ is a linear combination 
\begin{align*}
&x_1(a,c,a,e)+x_2(a,d,a,e)+x_3(a,d,c,e)\\
+&x_4(a,c,a,f)+x_5(a,d,a,f)+x_6(a,d,c,f)\\
+&x_7(b,c,b,e)+x_8(b,d,b,e)+x_9(b,d,c,e)\\
+&x_{10}(b,c,b,f)+x_{11}(b,d,b,f)+x_{12}(b,d,c,f).
\end{align*}
If $\alpha$ is a boundary, there is a $\beta\in C_2^{\rm Cube}(X)$ such that $
\partial_2^{\rm Cube}(\beta)=\alpha.$ Then we have
\begin{align*}
\begin{cases}
x_1-x_3+x_7-x_9=0\\
x_1-x_3+x_4-x_6=0\\
-x_1-x_2=1\\
x_2+x_3+x_8+x_9=0\\
x_2+x_3+x_5+x_6=0\\
x_4-x_6+x_{10}-x_{12}=0\\
-x_4-x_5=-1\\
x_5+x_6+x_{11}+x_{12}=0\\
x_7-x_9+x_{10}-x_{12}=0\\
-x_7-x_8=-1\\
x_8+x_9+x_{11}+x_{12}=0\\
-x_{10}-x_{11}=1.
\end{cases}
\end{align*}
Observe that \begin{align*}
&(-x_{10}-x_{11})=\\
&-(\left(x_7-x_9+x_{10}-x_{12})+(-x_7-x_8)+(x_8+x_9+x_{11}+x_{12})\right).
\end{align*}
 It follows that $\beta=0$, a contradiction.
}
\end{Example}

\begin{Remark}
	{\rm  It sometimes happens that computing discrete homology groups for a poset has the effect of simplifying the  computation of the classical homology groups. This is exactly what Example \ref{trivial homology} tells us. Furthermore, compared with the classical homology groups for a poset, the discrete ones sometimes have the advantage of collecting more imformation about this poset with nontrivial values. 

}
\end{Remark}

\subsection{An application of Theorem \ref{Main1}}
In this subsection, we demonstrate how Theorem \ref{Main1} can be applied to a criterion for determining the contractibility of triangulable manifolds.

Let $X$ be a homogeneous poset of dimension $m$.
  Define $X^{=p}$ as the subposet of elements of degree equal to $p$, 
$$X^{=p}=\{x^p_1,\cdots, x^p_{i_p}|~x^p_{i_j}\in X,  {\rm deg}(x^p_{i_j}) =p\}.$$
Label the vertices of $X$ by 
\begin{equation}\label{A1}
X=\bigcup_{\substack{1\leq t\leq i_{p}\\ 0\leq p\leq m}}x^{p}_{t}.
\end{equation}
It is no difficult to see that if $X$ is a poset with $|\mathcal{K}(X)|$ being an $m$-manifold, then $X$ is homogeneous.

Let $A=\mathcal{K}(\{x^{0}_{t_{0}},\cdots, x^{m}_{t_{m}}\})$ be an $m$-simplex of $\mathcal{K}(X)$ where $1\leq t_j\leq i_j,$ and let
$I^{m}=[0, 1]^{m}$ be the $m$-cube in $\mathbb{R}^{m}$. The last coordinate of $I^{m}$ is singled out for special reference and written as $I^{m}=I^{m-1}\times I$. 
Regard $|A|$ as a subpolyhedron of $I^m$ by the canonical homeomorphism $f$ given by
\begin{align*}
f(x^j_{t_j})=\begin{cases}
(0, 0\cdots, 0),&~\text{if}~j=0,\\
e_{j},&~\text{if}~ 1\leq j\leq m-1,\\
(\frac{1}{m}, \frac{1}{m}\cdots, \frac{1}{m}),&~\text{if}~j=m.
\end{cases}
\end{align*}
If $(x, t),~(x^{\prime}, t^{\prime})\in |A|\subseteq I^{m-1}\times I$, we say $(x, t)$ is \emph{directly below} $(x^{\prime}, t^{\prime})$ in $|A|$ if $x=x^{\prime}$ and $t<t^{\prime}$.

\begin{Definition}{\rm
Let $a, ~a^{\prime}\in|\mathcal{K}(X)|$. We say $a$ is in the \emph{shadow} of $a^{\prime}$ in $|\mathcal{K}(X)|$ if there is an $m$-chain $A$ of $X$ such that $|\mathcal{K}(A)|$ contains both $a$ and $a^{\prime}$, and $a$ is directly below $a^{\prime}$ in $|\mathcal{K}(A)|$.}
\end{Definition}

\begin{Definition}
	\rm{ Let $P$ be a subpolyhedron of $|\mathcal{K}(X)|$. The \emph{shadow} ${\rm sh}(P)$ of $X$ is the closure of the set of points of $P$ that are directly below some other point of $P$.} 
\end{Definition}

Let $P_2\subseteq P_1\subseteq |\mathcal{K}(X)|$ be subpolyhedra with $P_1\searrow P_2$. If $P_1\searrow P_2$ is elementary, that is $P_1\seRightarrow P_2$, then we call this elementary collapse \emph{sunny} if no points of $P_1-P_2$ are in  ${\rm sh}(P_1)$. We call a sequence of elementary sunny collapses a \emph{sunny collapse}.
 If $P_2$ is a point, then $P_1$ is called \emph{sunny collapsible}.

To get a feeling for what a sunny collapse is about, let us look at the following example.
\begin{Example}
{\rm
Let $X=\{x_1^0, x_2^0, x_1^1, x_2^1, x_1^2\}$ be a homogeneous poset of dimension $2$. Figure \ref{sunny1} shows its Hasse diagram and the geometric realization $|\mathcal{K}(X)|$.	Consider the polyhedra $P_i\subseteq |\mathcal{K}(X)|$, $1\leq i\leq3$, defined as follows:
\begin{align*}
&P_1=|\mathcal{K}(\{x_1^0, x_1^1, x_2^1, x_1^2\})|,\\
&P_2=|\mathcal{K}(\{x_1^0, x_1^1,  x_1^2\})|,\\
&P_3=|\mathcal{K}(\{x_1^0, x_1^1\}\cup \{x_1^1, x_1^2\})|.
\end{align*}
Then $P_1\seRightarrow P_2$ is a sunny collapse (see Figure \ref{sunny2}), while $P_2\seRightarrow P_3$ is not a sunny collapse (see Figure \ref{sunny3}).

\begin{figure}[htbp]
	\subfigure
	{\begin{minipage}{6cm}
			\centering
			\includegraphics[scale=0.55]{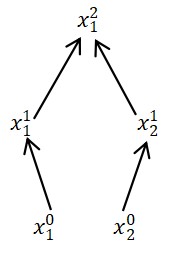}
	\end{minipage}}
	\subfigure
	{\begin{minipage}{6cm}
			\centering
			\includegraphics[scale=0.55]{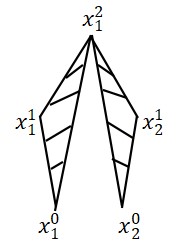}
	\end{minipage}}
	\caption{The poset $X$ and the geometric realization $|\mathcal{K}(X)|$.}
	\label{sunny1}
\end{figure}

\begin{figure}[htbp]
	\centering
	\begin{minipage}[t]{0.48\textwidth}
		\centering
		\includegraphics[width=5cm]{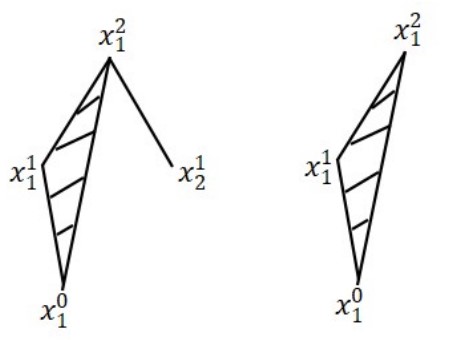}
		\caption{$P_1\seRightarrow P_2$ is a sunny collapse.}
		\label{sunny2}
	\end{minipage}
	\begin{minipage}[t]{0.48\textwidth}
		\centering
		\includegraphics[width=4cm]{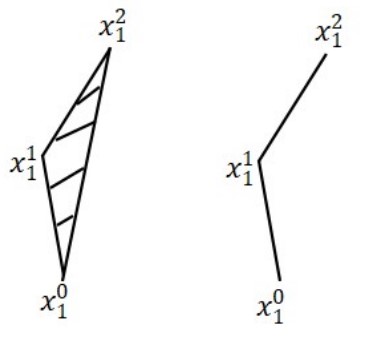}
		\caption{$P_2\seRightarrow P_3$ is not a sunny collapse.}
		\label{sunny3}
	\end{minipage}
\end{figure}
}
\end{Example}
Sunny collapses will turn out to be quite useful for proving our main theorem, though at first glance this property may seem just an idle technicality.
The following lemma shows that there exists a class of posets which are sunny collapsible.
Moreover, the fact that these posets are sunny collapsible becomes important later at key points in the proof of Lemma \ref{ballpair}, which plays a fundamental role for obtaining our main theorem.
\begin{Lemma}\label{horizontal collapsible}
	Let $X$ be a  poset with $|\mathcal{K}(X)|$ being an $m$-manifold  and $\sigma\in L_n(X)$. If ${\dim}\sigma(Q_n)<m$, then $|\mathcal{K}(\sigma(Q_n))|$ is sunny collapsible.
\end{Lemma}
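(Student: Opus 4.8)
The plan is to establish, for $Y:=\sigma(Q_{n})$ and $P:=|\mathcal{K}(Y)|$, the two facts: (i) $P$ collapses to a point, and (ii) its shadow ${\rm sh}(P)$ is empty. Fact (ii) makes every elementary collapse taking place inside $P$ automatically sunny, so combining (i) and (ii) proves that $P=|\mathcal{K}(\sigma(Q_{n}))|$ is sunny collapsible.

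For (i): $Y$ is a subposet of $X$ with maximum element $M:=\sigma(1,\dots,1)$, since $\sigma$ is order-preserving and $(1,\dots,1)$ is the maximum of $Q_{n}$, so $\sigma(v)\le M$ for every $v\in Q_{n}$. A finite poset with a maximum has an order complex that is a cone with apex $M$ (any chain of $Y$ is a face of a chain of $Y$ containing $M$), hence $\mathcal{K}(Y)\searrow 0$, and therefore $P=|\mathcal{K}(Y)|\searrow 0$ by the simplicial-to-polyhedral collapse implication recalled in Section \ref{section2}.

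For (ii): suppose, for contradiction, that some $a\in P$ is directly below some $a'\in P$. Then there is an $m$-chain $A=\{x^{0}<x^{1}<\cdots<x^{m}\}$ of $X$, with ${\rm deg}(x^{j})=j$ since $X$ is homogeneous of dimension $m$, such that $a,a'\in|\mathcal{K}(A)|$ and $a$ is directly below $a'$ in $|\mathcal{K}(A)|$. Write $a=\sum_{j=0}^{m}\mu_{j}v_{j}$, $a'=\sum_{j=0}^{m}\mu'_{j}v_{j}$ in barycentric coordinates, where $v_{j}=f(x^{j})$, with $f$ the coordinatization $v_{0}\mapsto 0$, $v_{j}\mapsto e_{j}$ $(1\le j\le m-1)$, $v_{m}\mapsto(\tfrac1m,\dots,\tfrac1m)$. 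A short computation gives that the last coordinate of $a$ equals $\mu_{m}/m$ and the projection of $a$ onto $I^{m-1}$ equals $(\mu_{1}+\tfrac{\mu_{m}}{m},\dots,\mu_{m-1}+\tfrac{\mu_{m}}{m})$; hence, writing $\epsilon:=\mu'_{m}-\mu_{m}$, the condition that $a$ is directly below $a'$ becomes $\epsilon>0$ together with
\[
\mu'_{j}=\mu_{j}-\frac{\epsilon}{m}\ \ (0\le j\le m-1),\qquad \mu'_{m}=\mu_{m}+\epsilon.
\]
Since $a'\in|\mathcal{K}(A)|$ forces $\mu'_{j}\ge 0$ for all $j$, we obtain $\mu_{j}\ge\epsilon/m>0$ for $0\le j\le m-1$, i.e. $\{x^{0},\dots,x^{m-1}\}\subseteq\text{support}(a)$, and also $\mu'_{m}=\mu_{m}+\epsilon>0$, i.e. $x^{m}\in\text{support}(a')$. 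But $a\in P=|\mathcal{K}(Y)|$ and $a'\in P$ give $\text{support}(a)\subseteq Y$ and $\text{support}(a')\subseteq Y$, whence $A=\{x^{0},\dots,x^{m}\}\subseteq Y$. Then $Y$ contains a chain of cardinality $m+1$, so $\dim\sigma(Q_{n})\ge m$, contradicting the hypothesis $\dim\sigma(Q_{n})<m$. Thus ${\rm sh}(P)=\emptyset$.

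To finish, observe that any subpolyhedron $P'\subseteq P$ satisfies ${\rm sh}(P')=\emptyset$ as well: a point of $P'$ directly below a point of $P'$ would be a point of $P$ directly below a point of $P$. In the collapse $P=P_{0}\seRightarrow P_{1}\seRightarrow\cdots\seRightarrow P_{k}$ obtained in (i), with $P_{k}$ a point and each $P_{i}$ a subpolyhedron of $P$, we therefore have ${\rm sh}(P_{i})=\emptyset$ for every $i$, so each step $P_{i}\seRightarrow P_{i+1}$ is (vacuously) sunny. Hence this is a sunny collapse and $|\mathcal{K}(\sigma(Q_{n}))|$ is sunny collapsible. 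The main point to watch is the coordinate bookkeeping in (ii): one needs that a strictly upward push assigns strictly positive weight to every bottom vertex $x^{0},\dots,x^{m-1}$ of $A$ and that the target point retains strictly positive weight on $x^{m}$; it is exactly this that upgrades $a,a'\in|\mathcal{K}(Y)|$ to $A\subseteq Y$, which is the crux of the contradiction.
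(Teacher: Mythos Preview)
Your proof is correct and in fact cleaner than the paper's. Both arguments exploit that $\sigma(Q_n)$ has an extremal element to get a cone structure on $\mathcal{K}(\sigma(Q_n))$ and hence collapsibility; the substantive difference lies in how the \emph{sunniness} of the collapse is obtained. The paper cones from the minimum $v=\sigma(0,\dots,0)$ and builds an explicit sequence $P_0\supseteq P_1\supseteq\cdots$ by stripping top-dimensional simplices from the cone, then simply asserts that ``no points of ${\rm sh}(P_{i-1})$ are removed,'' without explaining why. You instead prove the stronger statement ${\rm sh}(P)=\emptyset$: your barycentric computation shows that if $a\in P$ were directly below $a'\in P$ inside some $m$-simplex $|\mathcal{K}(A)|$, then all $m+1$ vertices of $A$ would have to lie in $\sigma(Q_n)$, contradicting $\dim\sigma(Q_n)<m$. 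Once the shadow is empty, \emph{every} elementary collapse inside $P$ is vacuously sunny, so the choice of cone apex and the order of collapses become irrelevant. Your route is thus more direct and, in particular, supplies exactly the justification that the paper's sketch leaves implicit.
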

\begin{proof}
	Denote by $v$ the image of $(0, \cdots, 0)$ under $\mathcal{K}(\sigma)$.
	We shall construct inductively a decreasing sequence of subpolyhedra 
	$$ |\mathcal{K}(\sigma(Q_n))|=P_0\supseteq P_1\supseteq\cdots\supseteq P_k=v,$$
	satisfying the two conditions:
	\begin{itemize}
		\item [(1)] $P_i=|vK^{n-i-1}|$, where $K^{n-i-1}$ is a homogeneous $(n-i-1)$-complex.
		\item[(2)] There is a sunny collapse $P_{i-1}\seRightarrow P_i$.
	\end{itemize}
	Let $K^{n-1}=\{A\in\mathcal{K}(\sigma(Q_n)) ~|~v~ \text{is not a vertex of}~ A\}$. Then $P_0=|vK^{n-1}|$.  
	
	Suppose we are given $P_{i-1},~ K^{n-i}$ satisfying the two conditions, we have to construct $P_i, ~K^{n-i-1}$ and verify the two conditions.
	Define 
	$$ K^{n-i-1}=\text{the}~ (n-i-1)~\text{- skeleton of}~ K^{n-i},$$
	which is a homogeneous complex since $K^{n-i}$ is homogeneous.  
	Let $P_i=|vK^{n-i-1}|$. 
	
	For each $(n-i)$-simplex $D\in K^{n-i}$, the closure ${\rm cl}(v|D|)$ is a ball and $|D|$ is a face. Collapsing each ${\rm cl}(v|D|)$ from ${\rm cl}(|D|)$, we obtain  $$|vK^{n-i}|\seRightarrow |vK^{n-i-1}|,$$ which is a sunny collapse because no points of ${\rm sh}(P_{i-1})$ are removed.
\end{proof}

{\rm{\cite[Lemma 20]{ECZ1963}}} states that if $(B^m, B^q)$ is an $(m, q)$-ball pair of codimension $m-q\geq3$, then $B^m\searrow B^q$. If $X$ is a contractible  poset with $|\mathcal{K}(X)|$ being an $m$-manifold,
then $|\mathcal{K}(X)|$ is a PL ball since a collapsible PL manifold is a PL ball {\rm{\cite[Corollary 3.27]{CPR1972}}}. Note that $$(|\mathcal{K}(X)|, |\mathcal{K}(\sigma(Q_n))|)$$ may be not a ball pair {\rm{\cite{ECZ1963}}}, since $\partial |\mathcal{K}(\sigma(Q_n))|$ and ${\rm int}|\mathcal{K}(\sigma(Q_n))|$ are not necessarily contained in $\partial |\mathcal{K}(X)|$ and ${\rm int}|\mathcal{K}(X)|$, respectively. Moreover, the codimension may be less than $3$. 
Thus we cannot apply {\rm{\cite[Lemma 20]{ECZ1963}}} to show that $$|\mathcal{K}(X)|\searrow |\mathcal{K}(\sigma(Q_n))|$$ if ${\dim}\sigma(Q_n)<m$. However, the techniques thereof can be used to obtain this result.

\begin{Lemma}\label{ballpair}
	Let $X$ be a contractible  poset with $|\mathcal{K}(X)|$ being an $m$-manifold and $\sigma\in L_n(X)$. Then 
	$|\mathcal{K}(X)|\searrow |\mathcal{K}(\sigma(Q_n))|$ if ${\dim}\sigma(Q_n)<m$.
\end{Lemma}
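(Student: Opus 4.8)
\textbf{Proof plan for Lemma \ref{ballpair}.}

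The plan is to mimic the proof of \cite[Lemma 20]{ECZ1963}, replacing the codimension hypothesis by the sunny-collapsibility of $|\mathcal{K}(\sigma(Q_n))|$ that we secured in Lemma \ref{horizontal collapsible}. Write $M=|\mathcal{K}(X)|$ and $B=|\mathcal{K}(\sigma(Q_n))|$; since $X$ is contractible and $M$ is a PL $m$-manifold, $M$ is a PL $m$-ball by \cite[Corollary 3.27]{CPR1972}, and by Lemma \ref{horizontal collapsible} $B$ is sunny collapsible, so in particular $B$ is a PL ball of dimension $n'=\dim\sigma(Q_n)<m$. First I would pass to a regular neighbourhood: let $N$ be a regular neighbourhood of $B$ in $M$. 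Since $B$ is collapsible and $M$ is a PL $m$-ball, $N$ is a PL $m$-ball \cite[Corollary 3.27]{CPR1972}, and $M\searrow N$ (a ball collapses to a regular neighbourhood of a subpolyhedron it collapses to — one uses $M\searrow B$, which holds because $M$ is collapsible and collapses can be arranged to pass through any given regular neighbourhood). So it remains to prove $N\searrow B$.

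The heart of the argument is therefore the local statement: \emph{if $N$ is a regular neighbourhood of a sunny-collapsible polyhedron $B$ inside a PL manifold, then $N\searrow B$.} Here I would run the induction of Lemma \ref{horizontal collapsible} in parallel. Choose the decreasing sequence $B=P_0\supseteq P_1\supseteq\cdots\supseteq P_k=v$ of subpolyhedra with $P_{i-1}\seRightarrow P_i$ a sunny elementary collapse and $P_i=|vK^{n-i-1}|$ a cone. By Zeeman's technique, a sunny collapse can be ``thickened'': because the free face removed at each stage carries no point lying in the shadow $\mathrm{sh}(P_{i-1})$, the corresponding elementary collapse of the regular neighbourhood can be performed \emph{monotonically in the last coordinate direction} singled out in the discussion preceding Lemma \ref{horizontal collapsible}. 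Concretely, one uses the product structure $I^m=I^{m-1}\times I$ on each $m$-simplex: the shadow condition guarantees that the ball $\mathrm{cl}(v|D|)$ attached at step $i$ meets $P_i$ in a genuine free face within $N$, so that the elementary PL collapse $N_{i-1}\seRightarrow N_i$ of regular neighbourhoods exists, where $N_i$ is a regular neighbourhood of $P_i$ in $M$. Composing, $N=N_0\searrow N_1\searrow\cdots\searrow N_k$, and $N_k$ is a regular neighbourhood of the point $v$, hence collapses to $v$; chasing back, $N\searrow B$. Combining with $M\searrow N$ gives $M\searrow B$, i.e.\ $|\mathcal{K}(X)|\searrow|\mathcal{K}(\sigma(Q_n))|$.

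The main obstacle I anticipate is making the ``thickening'' of a \emph{sunny} elementary collapse precise in PL terms: one must verify that the complementary face $\mathrm{cl}(\partial B^n - B^{n-1})$ of each attached ball in $N$ really is free, and this is exactly where the shadow hypothesis is used — if a point of $P_{i-1}-P_i$ lay in $\mathrm{sh}(P_{i-1})$, the collapse of $N$ across it would be blocked by the thickening coming from the simplex lying ``above'' it in the last-coordinate direction. I would handle this by working simplex-by-simplex on $\mathcal{K}(X)$, using the explicit coordinates $I^m=I^{m-1}\times I$ and the homeomorphism $f$ fixed before Lemma \ref{horizontal collapsible} to produce an explicit PL collapsing isotopy in the $I$-direction, and invoking the regular neighbourhood theorem to glue these local collapses into a global one. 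A secondary technical point is the base case and the bookkeeping that $P_i=|vK^{n-i-1}|$ remains a cone after each thickened collapse, which follows from condition (1) in Lemma \ref{horizontal collapsible} together with the fact that regular neighbourhoods of cones can be taken to be cones.
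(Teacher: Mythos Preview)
Your plan has two genuine gaps that prevent it from going through as written.

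\textbf{Circularity in the first reduction.} You argue that $M\searrow N$ because ``$M\searrow B$, which holds because $M$ is collapsible and collapses can be arranged to pass through any given regular neighbourhood.'' But $M\searrow B$ is exactly the statement of the lemma; the fact that $M$ is collapsible means only $M\searrow\text{point}$, and a collapsible polyhedron does not in general collapse onto an arbitrary collapsible subpolyhedron. So the reduction to proving $N\searrow B$ rests on the very thing you are trying to establish.

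\textbf{The thickened collapses go the wrong way.} Even granting that each sunny elementary collapse $P_{i-1}\seRightarrow P_i$ can be thickened to $N_{i-1}\searrow N_i$, the chain you obtain is $N=N_0\searrow N_1\searrow\cdots\searrow N_k\searrow v$, which shows $N\searrow v$, not $N\searrow B=P_0$. Your phrase ``chasing back'' hides the difficulty: collapses do not invert, and there is no mechanism here producing a collapse terminating at $P_0$ rather than at the final $P_k$. (Incidentally, the assertion that $B$ is a PL ball is also unjustified: $\mathcal{K}(\sigma(Q_n))$ is a cone, hence collapsible, but a cone on an arbitrary complex is not a ball.)

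The paper's argument avoids both issues by working directly inside $|\mathcal{K}(X)|$ rather than in a regular neighbourhood. It takes the sunny collapse $P_0\seRightarrow\cdots\seRightarrow P_r$ from Lemma~\ref{horizontal collapsible} and sets $W_i$ to be $P_0$ together with all points of $|\mathcal{K}(X)|$ lying \emph{below} $P_i$ in the last-coordinate direction. One then shows
\[
|\mathcal{K}(X)|\searrow W_0\searrow W_1\searrow\cdots\searrow W_r\searrow P_0.
\]
The first collapse uses collapsibility of $|\mathcal{K}(X)|$ by collapsing simplices from the top in order of decreasing dimension down to (a triangulation of) $W_0$; the last is trivial since $W_r$ is $P_0$ with possibly some arcs hanging below a single point. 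For each intermediate step, if $P_{i-1}\seRightarrow P_i$ removes the simplex $A=aB$ from its free face $B$, one examines the $m$-simplices of $\mathcal{K}(X)$ containing $A$ and uses the sunniness condition to check that the cells $c_jA$ joining $A$ to the points $c_j$ directly below it meet $W_i$ only in $|a\,\partial(c_jB)|$, so that $W_{i-1}\searrow W_i$ by collapsing each $c_jA$ from $c_jB$. The crucial point is that every $W_i$ already contains $P_0$, so the sequence terminates at $P_0$ and not at a point---this is precisely what your thickening scheme lacks.
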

\begin{proof}
	We write the sunny collapse in the proof of Lemma \ref{horizontal collapsible}  as
	$$|\mathcal{K}(\sigma(Q_n))|=P_0\seRightarrow P_1\seRightarrow\cdots\seRightarrow P_r=\sigma(0,\cdots, 0).$$
	Let $W_i$ be the polyhedron consisting of $P_0$ and all points below $P_i$ in $|\mathcal{K}(X)|$. We will prove that \begin{equation*}\label{colla}
	|\mathcal{K}(X)|\searrow W_0\searrow W_1\searrow\cdots\searrow W_r\searrow P_0.
	\end{equation*}
	
	Choose a triangulation $R$ of $|\mathcal{K}(X)|$ containing a subdivision $R_0$ of $W_0$. The collapsibility of $|\mathcal{K}(X)|$ allows us to collapse $R$  to $R_0$ simplicially from the top, in order of decreasing dimension of simplices, completing the first step. The last step is obvious. If ${\rm deg}\sigma(0,\cdots, 0)<m$, then $W_r=P_0$. If ${\rm deg}\sigma(0,\cdots, 0)=m$, then  $W_r$ consists of $P_0$ joined by some single arcs formed by points below $\sigma(0,\cdots, 0)$ in $|\mathcal{K}(X)|$. Collapsing these arcs  finishes this step.
	
	Fix $i, 1\leq i\leq r$. We shall prove $W_{i-1}\searrow W_i$. If $P$ is a polyhedron of $|\mathcal{K}(X)|$, we use notation $E(P)$ to denote the polyhedron of $P$ and all points below $P$ in $|\mathcal{K}(X)|$.
	Suppose $P_{i-1}\seRightarrow P_i$ collapses $A$ from $B$, where $A=aB$.  Let $k={\rm dim} A$.  Then $A$ is of the form $$x^{p}_{t}|\mathcal{K}\{x^{p_1}_{t_{1}},\cdots,x^{p_{k}}_{t_{k}}\}|,$$ where $a=x_t^p, ~p_1<\dots<p_{k}, ~1\leq t_{j}\leq i_{p_j}$, and $B= |\mathcal{K}\{x^{p_1}_{t_{1}},\cdots,x^{p_{k}}_{t_{k}}\}|$. 
	If $$E(B)=E(A)=\emptyset,$$  we are done. 
	
	Now assume that $$E(B)\not=\emptyset,$$ this is equivalent to saying that $p_k=m$. It follows that
	$$ E(b)\not=\emptyset,$$
	for any $b\in {\rm int}B$. In particular, $E(\widehat{B})\not=\emptyset$, where $\widehat{B}$ is the barycenter of $B$.
	Let $C_1,\cdots, C_r$ be all $m$-chains of $X$ containing
	$\{x^{p_1}_{t_{1}},\cdots,x^{p_{k}}_{t_{k}}, x^p_t\}$.  Let us take  a point $c_j\in|\mathcal{K}(C_j-x^{p_{k}}_{t_{k}})|$ for each $j$  that is directly below the barycenter of $B$. If $c_j\in P_0$, then
	 $$|\mathcal{K}(C_j-x^{p_{k}}_{t_{k}})|\subseteq P_0,$$
	 since $c_j$ is by definition in the interior of $|\mathcal{K}(C_j-x^{p_{k}}_{t_{k}})|$. This forces
	 $$x^{p_{k}}_{t_{k}}|\mathcal{K}(C_j-x^{p_{k}}_{t_{k}})|\subseteq P_0,$$ $x^{p_{k}}_{t_{k}}$ being a vertex of $\sigma(Q_n)$,
	 which contradicts the dimension of $P_0$.
	 Hence  $c_j\not\in P_0$ and  $$c_jA\cap P_0=A.$$
	Let $T_j=W_i\cup c_jA$. Then $$W_{i-1}=\bigcup\limits_{j}T_j.$$
	No points of $P_i$ are above $A$ since this collapse is sunny. It follows that $W_i\cap c_jA=|x^p_t\partial(c_j B)|$, $W_i\cup c_jA=T_j$. Each $T_j\searrow W_i$ is obtained by collapsing $c_jA$  from $c_jB$. This finishes the proof that $W_{i-1}\searrow W_i$ in this case.
	
	It remains to consider the case $$E(B)=\emptyset,~E(A)\not=\emptyset.$$  This says $p=m,~ p_k<m$. We take a point $c^{\prime}_j\in|\mathcal{K}(C_j-x_t^p)|$  for each $j$  that is directly below $x_t^p$.  The argument for the previous case does apply in this case with $c_j$ replaced by $c_j^{\prime}$.
\end{proof}

	 Figure \ref{shadow} shows a poset $X$ and the geometric realization  $|\mathcal{K}(X)|$. To illustrate this proof,
	  let $\sigma\in L_1(X)$ be represented by $(C, D)$.
	 Then the shaded area in the figure on the right is $W_0$.
\begin{figure}[htbp]
	\subfigure
	{\begin{minipage}{6cm}
			\centering
			\includegraphics[scale=0.65]{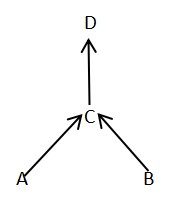}
	\end{minipage}}
	\subfigure
	{\begin{minipage}{6cm}
			\centering
			\includegraphics[scale=0.65]{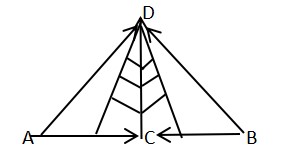}
	\end{minipage}}
	\caption{A poset $X$ and the geometric realization  $|\mathcal{K}(X)|$.}
	\label{shadow}
\end{figure}

Observe that an $(m, q)$-ball pair $(B^m, B^q)$ is unknotted by the argument in the proof of {\rm{\cite[Theorem 9]{ECZ1963}}}, provided that $B^m\searrow B^q$. If $\partial|\mathcal{K}(\sigma(Q_n))|\subseteq \partial|\mathcal{K}(X)|$ and ${\rm int}|\mathcal{K}(\sigma(Q_n))|\subseteq {\rm int}|\mathcal{K}(X)|$,
$$(|\mathcal{K}(X)|, |\mathcal{K}(\sigma(Q_n))|)$$ is a ball pair.
Thus by Lemma \ref{ballpair}, the following result holds.
\begin{Proposition}\label{abc}
Let $X$ be a contractible poset with $|\mathcal{K}(X)|$ being an $m$-manifold and $\sigma\in L_n(X)$. If ${\rm dim}\sigma(Q_n)<m$ and $(|\mathcal{K}(X)|, |\mathcal{K}(\sigma(Q_n))|)$ is a ball pair, then $$(|\mathcal{K}(X)|, |\mathcal{K}(\sigma(Q_n))|)$$ is an unknotted ball pair.
\end{Proposition}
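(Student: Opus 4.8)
The plan is short: the proposition follows by combining the hypothesis that $(|\mathcal{K}(X)|,|\mathcal{K}(\sigma(Q_n))|)$ is a ball pair with the polyhedral collapse produced in Lemma \ref{ballpair}, and then quoting the unknotting criterion recalled in the paragraph preceding the statement (the argument in the proof of \cite[Theorem 9]{ECZ1963}: an $(m,q)$-ball pair $(B^m,B^q)$ with $B^m\searrow B^q$ is unknotted). So essentially all the real work is already contained in Lemma \ref{ballpair}; here one only has to check that its output matches the hypotheses of the cited unknotting result verbatim.

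In detail, I would first fix notation: set $q:=\dim\sigma(Q_n)$, so that $q<m$ by assumption. By hypothesis $(|\mathcal{K}(X)|,|\mathcal{K}(\sigma(Q_n))|)$ is an $(m,q)$-ball pair; in particular $|\mathcal{K}(\sigma(Q_n))|$ is a PL $q$-ball properly embedded in the PL $m$-ball $|\mathcal{K}(X)|$ (this last being a PL ball also because $X$ is contractible, so $\mathcal{K}(X)$ is collapsible — as in the proof of Theorem \ref{Main1} — and a collapsible PL $m$-manifold is a PL $m$-ball by \cite[Corollary 3.27]{CPR1972}). One may note, if desired, that $\sigma(Q_n)$ has $\sigma(0,\dots,0)$ as a minimum element, hence is contractible and $\mathcal{K}(\sigma(Q_n))$ collapsible, which is the standing assumption used inside Lemma \ref{ballpair}. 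Since $\dim\sigma(Q_n)<m$, Lemma \ref{ballpair} applies to $X$ (contractible, $|\mathcal{K}(X)|$ an $m$-manifold) and $\sigma$, giving the genuine polyhedral collapse
\[
|\mathcal{K}(X)|\searrow|\mathcal{K}(\sigma(Q_n))|,\qquad\text{i.e.}\qquad B^m\searrow B^q .
\]
This is precisely the hypothesis of the unknotting criterion, which applied to $(B^m,B^q)=(|\mathcal{K}(X)|,|\mathcal{K}(\sigma(Q_n))|)$ yields that the pair is unknotted, as claimed.

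The \emph{main obstacle} is not in this proof at all — it lives in Lemma \ref{ballpair} — but there are two small points to be careful about when writing it up. First, one must use the cited unknotting statement with the standard proper-embedding convention for ball pairs ($\partial B^q\subseteq\partial B^m$ and ${\rm int}\,B^q\subseteq{\rm int}\,B^m$), since that is exactly what makes the argument of \cite[Theorem 9]{ECZ1963} go through; this is built into the hypothesis ``$(|\mathcal{K}(X)|,|\mathcal{K}(\sigma(Q_n))|)$ is a ball pair,'' so it is automatic, but worth spelling out. Second, the collapse furnished by Lemma \ref{ballpair} is an honest finite sequence of elementary polyhedral collapses (not merely a homotopy equivalence), which is what the unknotting criterion requires; and in the degenerate case $q=0$, where $\sigma(Q_n)$ is a single vertex, the pair $(B^m,B^0)$ is trivially unknotted, so no separate treatment is needed.
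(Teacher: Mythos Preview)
Your proposal is correct and follows exactly the paper's own argument: apply Lemma \ref{ballpair} to obtain the collapse $|\mathcal{K}(X)|\searrow|\mathcal{K}(\sigma(Q_n))|$, and then invoke the unknotting criterion from the proof of \cite[Theorem 9]{ECZ1963} for ball pairs $(B^m,B^q)$ with $B^m\searrow B^q$. The paper's proof is precisely the paragraph preceding the statement, and your write-up simply spells out the same two steps with a bit more care about hypotheses.
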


The main result of this subsection is a direct consequence of the following lemma.
\begin{Lemma}\label{int}
Let $X$ be a contractible poset with $|\mathcal{K}(X)|$ being an $m$-manifold and   $\sigma_1\in L_n(X)$ with $|\mathcal{K}(\sigma_1(Q_n))|\subseteq {\rm int}|\mathcal{K}(X)|$.  Assume that ${\rm dim}\sigma_1(Q_n)<m$ and $(|\mathcal{K}(X)|, |\mathcal{K}(\sigma_1(Q_n))|)$ is a ball pair. Let $$c=dim|\mathcal{K}(X-\sigma_1(Q_n))|-1.$$ If $n\leq m-1$ and $p\leq c$,
then for any $\sigma_2\in L_p(X-\sigma_1(Q_n))$ with $|\mathcal{K}(\sigma_2(Q_p))|\subseteq {\rm int}|\mathcal{K}(X-\sigma_1(Q_n))|$ and $x\in \sigma_1(Q_n)$, the $p$th Betti number of
  $$X-\sigma_1(Q_n)-\sigma_2(Q_p)$$ for discrete cubical homology theory
  is nonzero if $p=m-2\leq c,~0<n<m-1$, or $p=m-1\leq c, ~n=0$, or $p=0$.
\end{Lemma}
\begin{proof}
By Corollary \ref{appro}, $$\psi_*: {H}_p^{\rm Cube}(X-\sigma_1(Q_n)-\sigma_2(Q_p))\rightarrow  {H}_p^{\rm Simpl}(\mathcal{K}(X-\sigma_1(Q_n)))$$ is surjective.  From Proposition \ref{abc}, we know that
$(|\mathcal{K}(X)|, |\mathcal{K}(\sigma_1(Q_n))|)$ in an unknotted ball pair. Then
\begin{equation*}
{H}_{q}^{\rm Sing}(|\mathcal{K}(X)|-|\mathcal{K}(\sigma_1(Q_n))|)=\begin{cases}
\mathbb{Z},~&\text{if}~q=m-2,~0<n<m-1, ~\text{or}\\
&~q=m-1, ~n=0, ~\text{or}~q=0,\\
0,~&\text{otherwise}.
\end{cases}
\end{equation*}
The conclusion of this lemma is then immediate from Lemma \ref{retract} and Theorem \ref{Main1}.
\end{proof}

We are now in a position to state and prove the main result of this subsection. Let $M=|K|$ be a triangulated $m$-manifold with $K$ simplicially collapsible. Then $\chi(K)$ is collapsible {\rm{\cite[Theorem 4.2.11]{JAB2011}}} and $|\mathcal{K}(\chi(K))|=|K|$ is an $m$-manifold.
Theorem \ref{Main1} and Lemma \ref{int} tell us when the contractibility is not preserved by the manifold $M$ with two specific balls removed.

\begin{Proposition}\label{apptom}
	Let $M=|K|$ be a triangulated $m$-manifold with $K$ simplicially collapsible and $\sigma_1\in L_n(\chi(K))$ with $|\mathcal{K}(\sigma_1(Q_n))|\subseteq {\rm int}M$. Assume also that ${\rm dim}\sigma_1(Q_n)<m$ and $(M, |\mathcal{K}(\sigma_1(Q_n))|)$ is a ball pair. Let $$c=dim|\mathcal{K}(\chi(K)-\sigma_1(Q_n))|-1.$$ If $n\leq m-1$,
	then for any $\sigma_2\in L_p(\chi(K)-\sigma_1(Q_n))$ with $|\mathcal{K}(\sigma_2(Q_p))|\subseteq {\rm int}|\mathcal{K}(\chi(K)-\sigma_1(Q_n))|$,
	$$M-|\mathcal{K}(\sigma_1(Q_n))|-|\mathcal{K}(\sigma_2(Q_n))|$$
	in not contractible, if $0<p=m-2\leq c,~0<n<m-1$, or $0<p=m-1\leq c, ~n=0$.
\end{Proposition}

The next example will help better understand the content of Proposition \ref{apptom}.
\begin{Example}
{\rm Let $X$ be the following poset, which is a finite model of the $3$-ball. 
\begin{equation*}
\xymatrix{&i&\\
	f\ar[ur]&g\ar[u] &h\ar[ul]\\
	c\ar[u]\ar[ur]& d\ar[ul]\ar[ur]&e\ar[u]\ar[ul] \\
	a\ar[u]& b\ar[u]\ar[ul]\ar[ur]&.
}
\end{equation*}
Let $\sigma_1\in L_0(X)$ be the constant map $Q_0\rightarrow i$, and let $\sigma_2\in L_1(X-\sigma_1(Q_0))$ be the represented $(b, c)$. Then $X-\sigma_1(Q_0)-\sigma_2(Q_1)$ is a finite model of the circle, being not contractible. It follows that the manifold $|\mathcal{K}(X)|-|\mathcal{K}(\sigma_1(Q_0))|-|\mathcal{K}(\sigma_2(Q_1))|$ is not contractible.
}
\end{Example}

\section{Discrete Hurewicz map}\label{section5}
The aim of this section is to construct an analogue of the Hurewicz homomorphism connecting discrete homotopy theory with discrete  cubical homology theory. 
The discrete Hurewicz homomorphism to be defined is similar to the one in {\rm{\cite{BL2019}}}. This discrete Hurewicz homomorphism indeed coincides with the classical one. This is stated in Theorem \ref{Hurewicz coincide}.

Given a finite poset $X$ with basepoint $x_0$,
let $f: (\mathbb{Z}^n, \partial\mathbb{Z}^n)\rightarrow (X, x_0)$ be a map of radius $r$. 
Define $f^{\prime}: (\mathbb{Z}^n, \partial \mathbb{Z}^n)\rightarrow (X, x_0)$ by $$f^{\prime}(x)=f(x_1-2r,\cdots, x_n-2r).$$
 Take $T_rf: (I_{4r}^n, \partial I_{4r}^n)\rightarrow (X, x_0)$ to be the restriction of $f^{\prime}$ to $I^n_{4r}$. 

To each map $f: I_{p_1}\times\cdots\times I_{p_n}\rightarrow X$, we associate an element $\varphi(f)$ of $\mathcal{C}_n^{\rm Cube}(X)$  in the following way. If $p_i=0$ for some $i$, then $\varphi(f)=0$. Assume $p_i>0$ for all $i$. For
each $x\in I_{p_1-1}\times\cdots\times I_{p_n-1}$, let $f^x: Q_n\rightarrow X$ be the $n$-cube given by $$f^x(y)=f(x+y),$$ 
and then $\varphi(f)$ is defined by
$$ \varphi(f)=\sum_{x\in I_{p_1-1}\times\cdots\times I_{p_n-1}}f^x.$$
\begin{Definition}{\rm
The \emph{discrete Hurewicz map} $h^D: \pi^{D}_n(X, x_0)\rightarrow {H}^{\rm Cube}_n(X)$ is defined by
$$h^D([f])=[\varphi(T_{r}f)],$$
where $f: (\mathbb{Z}^n, \partial\mathbb{Z}^n)\rightarrow (X, x_0)$ is a map of radius $r$.}
\end{Definition}
This definition is justified by the following lemma.

\begin{Lemma}\label{cccc}
\begin{itemize}
	\item [(1)] Assume $f: (I^n_r, \partial I_r^n)\rightarrow (X, x_0)$ is a map and $\widetilde{f}: (I^n_{\widetilde r}, \partial I_{\widetilde r}^n)\rightarrow (X, x_0)$ is an extension of $f$ defined by
	\begin{equation}\label{eq3}
	\widetilde{f}(x)=\begin{cases}
	f(x) ~&\text{if}~x\in I^n_r,\\
	x_0 ~&\text{if}~x\not\in I^n_r.
	\end{cases}
	\end{equation}
	Then $\varphi(f)=\varphi(\widetilde{f})$.
	\item[(2)] If $f: (\mathbb{Z}^n, \partial\mathbb{Z}^n)\rightarrow (X, x_0)$ is a map of radius $r$, then $$\varphi(T_rf)\in {\rm Ker}(\partial^{\rm Cube}_n).$$
	\item[(3)] If  $f$ and $g$ are homotopic maps $(\mathbb{Z}^n, \partial\mathbb{Z}^n)\rightarrow (X, x_0)$, then $$[\varphi(T_{r_f}f)]=[\varphi(T_{r_g}g)]$$ in ${H}^{\rm Cube}_n(X)$, where $r_f$ and $r_g$ are radii of $f$ and $g$, respectively.
\end{itemize}
\end{Lemma}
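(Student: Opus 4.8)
The plan is to handle the three parts in order: (1) and (2) are direct computations with cube face operators, and (3) is a prism (mapping--cylinder) argument built on a given homotopy, for which (1) is also needed as a normalization tool.

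For (1), write $\varphi(\widetilde f)=\sum_{x\in I^n_{\widetilde r-1}}\widetilde f^{\,x}$ and split the indices according to whether $x\in I^n_{r-1}$ or not. If $x\in I^n_{r-1}$ then $x+y\in I^n_r$ for all $y\in Q_n$, so $\widetilde f^{\,x}=f^x$; these are exactly the generators of $\varphi(f)$. If instead some coordinate $x_i\ge r$, then in the cube $\widetilde f^{\,x}$ both the $0$- and $1$-slices in the $i$th direction land where $\widetilde f$ is constantly $x_0$ (namely in $\mathbb Z^n$ off the interior of $I^n_r$), so $f_i^-\widetilde f^{\,x}=f_i^+\widetilde f^{\,x}$, the cube $\widetilde f^{\,x}$ is degenerate, and it vanishes in $C_n^{\rm Cube}(X)$. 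Hence $\varphi(\widetilde f)=\varphi(f)$. In particular $\varphi$ is unchanged when the domain is padded out by the basepoint; the analogous reindexing shows it is also unchanged under translating the domain, a fact that will be reused in (3).

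For (2), put $F=T_rf\colon I^n_{4r}\to X$; since $F$ is $f$ translated by $(2r,\dots,2r)$ into $I^n_{4r}$, its support is buried in the interior and (as recorded in the definition of $T_r$) $F$ is constantly $x_0$ on $\partial I^n_{4r}$. Expanding
\[
\partial_n^{\rm Cube}\varphi(F)=\sum_{i=1}^n(-1)^i\sum_{x\in I^n_{4r-1}}\bigl(f_i^-F^x-f_i^+F^x\bigr),
\]
the elementary identity $f_i^+F^x=f_i^-F^{x+e_i}$ (with $e_i$ the $i$th unit vector) makes each inner sum telescope in the $i$th coordinate, leaving only the two terms whose $i$th coordinate is $0$ or $4r$; for both of those the relevant slice lies in $\partial I^n_{4r}$, so both equal the constant $(n-1)$-cube at $x_0$ and cancel. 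Thus $\partial_n^{\rm Cube}\varphi(F)=0$, i.e. $\varphi(T_rf)\in\ker\partial^{\rm Cube}_n$.

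For (3), take a homotopy $H\colon(\mathbb Z^n,\partial\mathbb Z^n)\times I_p\to(X,x_0)$ from $f$ to $g$ (the case $p=0$ being vacuous) and let $r_0=r_f,\dots,r_p=r_g$ be the radii of $H_0,\dots,H_p$. Fix $R\ge\max\{1,r_0,\dots,r_p\}$. By (1) and translation-invariance of $\varphi$, the classes $[\varphi(T_{r_f}f)]$ and $[\varphi(T_{r_g}g)]$ are unchanged if we replace $T_{r_f},T_{r_g}$ by the re-centering operation $T_R$ (translate the domain by $(2R,\dots,2R)$, then restrict to $I^n_{4R}$). Now translate the first $n$ coordinates of $H$ by $(2R,\dots,2R)$ and restrict them to $I^n_{4R}$, obtaining $TH\colon I^n_{4R}\times I_p\to X$, which is constantly $x_0$ on $\partial I^n_{4R}\times I_p$ (by the choice of $R$) and satisfies $TH(-,0)=T_Rf$, $TH(-,p)=T_Rg$. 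Apply $\varphi$ to the $(n+1)$-parameter map $TH$ and compute $\partial_{n+1}^{\rm Cube}\varphi(TH)$: the $n$ spatial directions telescope to $0$ exactly as in (2) (their boundary slices lie in $\partial I^n_{4R}$), while the $t$-direction telescopes, via $f_{n+1}^-(TH)^{(x,0)}=(T_Rf)^x$ and $f_{n+1}^-(TH)^{(x,p)}=(T_Rg)^x$, to $\varphi(T_Rf)-\varphi(T_Rg)$. Hence
\[
\varphi(T_Rf)-\varphi(T_Rg)=\pm\,\partial_{n+1}^{\rm Cube}\varphi(TH)\in\operatorname{Im}\partial_{n+1}^{\rm Cube},
\]
so $[\varphi(T_{r_f}f)]=[\varphi(T_Rf)]=[\varphi(T_Rg)]=[\varphi(T_{r_g}g)]$ in $H^{\rm Cube}_n(X)$. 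The only genuinely delicate point, and the main obstacle, is this normalization step: one must be sure that the single shift by $(2R,\dots,2R)$ makes \emph{every} intermediate map $H_i$ constant on $\partial I^n_{4R}$ (which is why $R$ must dominate all the $r_i$, not just $r_f,r_g$), and that re-centering a cube map inside a larger box alters $\varphi$ only by degenerate cubes — this is precisely where (1) is invoked, so the order of the three parts is essential. Once this is settled, the boundary computation in (3) is the same telescoping as in (2), with the extra $t$-direction producing the two faces $\varphi(T_Rf)$ and $\varphi(T_Rg)$.
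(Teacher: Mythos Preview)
Your proof is correct and follows essentially the same route as the paper: part (1) by splitting the index set and observing the extra cubes are degenerate, part (2) by telescoping the face operators via $f_i^+F^x=f_i^-F^{x+e_i}$ and using that $T_rf$ is constant on $\partial I^n_{4r}$, and part (3) by applying $\partial_{n+1}^{\rm Cube}$ to $\varphi(TH)$ so that the spatial directions vanish as in (2) and the time direction yields the difference of the two endpoint chains. The only cosmetic difference is that you make the normalization step in (3) explicit (choosing a common $R$ dominating all intermediate radii and invoking (1) plus translation-invariance), whereas the paper absorbs this into the final equality $\varphi((TH)_0)=\varphi(T_{r_f}f)$; your version is arguably cleaner on this point.
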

\begin{proof}
By the definition of $\varphi$,
\begin{equation*}
\begin{split}
\varphi(\widetilde{f})&=\sum_{x\in I^n_{r-1}}\widetilde{f}^x+\sum_{x\in I^n_{\widetilde r-1}-I^n_{r-1}}\widetilde{f}^x\\
&=\sum_{x\in I^n_{r-1}}f^x=\varphi(f),
\end{split}
\end{equation*}
where the second equality comes from the fact that $\widetilde{f}^x$ is a degenerate cube for each $I^n_{\widetilde r-1}-I^n_{r-1}$. This proves $(1)$.

For $1\leq i\leq n$, let $\Gamma_i=\{x\in I_{4r-1}^n~|~x_i=0\}$.   One has
\begin{equation*}
\begin{split}
\partial_n^{\rm Cube}\varphi(T_rf)&=\sum_{i=1}^{n}(-1)^i\sum_{x\in\Gamma_i}\sum_{r=0}^{4r-1}\left(f_i^-(T_rf)^{x+re_i}-f_i^+(T_rf)^{x+re_i}\right)\\
&=\sum_{i=1}^{n}(-1)^i\sum_{x\in\Gamma_i}\left(f_i^-(T_rf)^{x}-f_i^+(T_rf)^{x+(4r-1)e_i}\right), 
\end{split}
\end{equation*}
where $e_i\in\mathbb{Z}^n$ is a vector with $1$ in the $i$-th position and $0$  everywhere else.
Observe that for each $i$ and $x\in\Gamma_i$, $f_i^-(T_rf)^{x}-f_i^+(T_rf)^{x+(4r-1)e_i}$ is $0$ since $T_rf$ is constant on $\partial I_{4r}^n$. This proves $(2)$.
	
 Given a homotopy $H: \mathbb{Z}^n\times I_p\rightarrow X$  from $f$ to $g$, 
 let $TH$ be the map given by
 \begin{equation*}
 \begin{split}
I^n_{4r}\times I_p&\rightarrow X,\\
(x, t)&\mapsto H(x-2r, t),
 \end{split}
 \end{equation*}
 $r$ being taken the maximum integer of radii of $H_i, ~0\leq i\leq p$.
Let $$\Gamma_i=\{x\in I^n_{4r-1}\times I_{p-1}~|~x_i=0\}$$ and let $e_i\in \mathbb{Z}^{n+1}$ 
be the vector with $1$ in the $i$-th position and $0$  everywhere else. We have
\begin{equation*}
\begin{split}
\partial_{n+1}^{\rm Cube}\varphi(T H)(y)&=\sum_{i=1}^{n}(-1)^i\sum_{x\in\Gamma_i}\sum_{r=0}^{4r-1}\left(f_i^- (TH)^{x+re_i}(y)-f_i^+ (TH)^{x+re_i}(y)\right)\\
+&(-1)^{n+1}\sum_{x\in\Gamma_{n+1}}\sum_{r=0}^{p-1}\left(f_{n+1}^- (TH)^{x+re_{n+1}}(y)-f_{n+1}^+ (TH)^{x+re_{n+1}}(y)\right)\\
&= (-1)^{n+1}\sum_{x\in\Gamma_{n+1}}\left(f_{n+1}^- (TH)^{x}(y)-f_{n+1}^+ (TH)^{x+(p-1)e_{n+1}}(y)\right)\\
&=(-1)^{n+1}\sum_{x\in\Gamma_{n+1}} (TH)(x_1+y_1,\cdots, x_n+y_n,0)\\
-&(-1)^{n+1}\sum_{x\in\Gamma_{n+1}} (TH)(x_1+y_1,\cdots, x_n+y_n,p)\\
&=(-1)^{n+1}\left(\varphi({(TH)}_0)-\varphi({(TH)}_p)\right)(y)\\
&=(-1)^{n+1}\left(\varphi(T_{r_f}f)-\varphi(T_{r_g}g)\right)(y),
\end{split}
\end{equation*}
where the second equality follows just as in the proof of $(2)$ and
the last equality follows from $(1)$.
\end{proof}

From Lemma \ref{cccc}, we know that $h^D$ is well-defined. The next proposition will show that indeed $h^D$ preserves group structure.
\begin{Proposition}
The discrete Hurewicz map $h^D: \pi^{D}_n(X, x_0)\rightarrow {H}^{\rm Cube}_n(X)$ is a homomorphism.
\end{Proposition}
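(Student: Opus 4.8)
The plan is to establish the homomorphism property at the level of cubical chains. Fix $[f],[g]\in\pi^D_n(X,x_0)$ with radii $r_f,r_g$, let $R$ be the radius of $f\cdot g$, and recall that $h^D([f\cdot g])=[\varphi(T_R(f\cdot g))]$ while $h^D([f])+h^D([g])=[\varphi(T_{r_f}f)]+[\varphi(T_{r_g}g)]$. I would show directly that
$$\varphi\big(T_R(f\cdot g)\big)\;=\;\varphi(T_{r_f}f)+\varphi(T_{r_g}g)\qquad\text{in }C_n^{\rm Cube}(X),$$
which immediately gives the desired identity in $H^{\rm Cube}_n(X)$. Since $\varphi$ of any map is by construction an honest sum of $n$-cubes, it is enough to produce such an equality modulo degenerate cubes, as these are killed in $C_n^{\rm Cube}(X)=L_n(X)/D_n(X)$.

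First I would record two elementary stability properties of $\varphi$. \emph{Translation invariance}: if $\widetilde f(x)=f(x-v)$ for a fixed $v\in\mathbb{Z}^n$, then $\varphi(\widetilde f)=\varphi(f)$, because $\varphi(f)=\sum_x f^x$ is simply re-indexed by $x\mapsto x+v$. \emph{Invariance under padding by the basepoint}: this is precisely Lemma \ref{cccc}(1). Together these two facts show that, up to a sum of degenerate cubes, $\varphi(T_r f)$ equals the sum of the nondegenerate cubes $f^y$ as $y$ ranges over any box containing the support of $f$ (the set of vertices where $f\neq x_0$); in particular the padding boxes $I^n_{4r}$ in the definition of $T_r$ play no essential role.

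The heart of the argument is then a combinatorial decomposition. Recalling that a based map of radius $r$ equals $x_0$ on a neighbourhood of the walls of the box $[0,r]^n$ in every coordinate direction, one checks from the defining formula of $f\cdot g$ that, inside the domain of $T_R(f\cdot g)$, the integer indices $x$ of the cubes $(f\cdot g)^x$ split, according to the value of the first coordinate $x_1$, into three consecutive blocks: an \emph{$f$-block}, on which $f\cdot g$ agrees (after a fixed translation) with $f$; a \emph{$g$-block}, on which it agrees (after a fixed translation) with $g$; and a \emph{buffer block} lying strictly between them, on which $f\cdot g$ is constant in the $x_1$-direction. The separating slab on which $f\cdot g\equiv x_0$ has width at least one, so no cube straddles the $f$-support and the $g$-support: every cube indexed in the buffer block satisfies $f_1^-=f_1^+$ and is hence degenerate, the cubes of the $f$-block are exactly those summed in $\varphi$ applied to a translate of $f$ restricted to a box containing its support, and likewise for the $g$-block. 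Applying translation invariance and Lemma \ref{cccc}(1) to the $f$-block and to the $g$-block then yields the displayed chain identity; the abelianness of $H^{\rm Cube}_n(X)$ renders the ordering of the two blocks immaterial. I expect the only real obstacle to be the bookkeeping in this last step: writing out the three blocks precisely and checking that the two border cubes (between $f$-block and buffer, and between buffer and $g$-block) are either degenerate or already accounted for in the corresponding $\varphi$, which is exactly where one must invoke the radius conventions for based maps together with Lemma \ref{cccc}(1) rather than attempt a naive cube-by-cube matching.
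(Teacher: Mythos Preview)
Your approach is correct and is essentially an elaboration of the paper's one-line argument. The paper simply observes that $\varphi(T_{r_f+r_g}(f\cdot g))-\bigl(\varphi(T_{r_f}f)+\varphi(T_{r_g}g)\bigr)$ is a sum of degenerate $n$-cubes (in fact constant cubes $Q_n\to x_0$), hence vanishes in $C_n^{\rm Cube}(X)$; your block decomposition together with translation invariance and Lemma~\ref{cccc}(1) makes this explicit, and your closing remark about the border cubes being ``either degenerate or already accounted for'' is exactly the point.
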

\begin{proof}
For any two maps $f, g: (\mathbb{Z}^n, \partial\mathbb{Z}^n)\rightarrow (X, x_0)$, $\varphi(T_{r_f+r_g}(f\cdot g))-(\varphi(T_{r_f}f)+\varphi(T_{r_g}g))$ is zero in $\mathcal{C}_n^{\rm Cube}(X)$, being a sum of degenerate $n$-cubes $Q_n\rightarrow x_0$, so $h^D([f]\cdot[g])=h^D([f])+h^D([g])$.
\end{proof}

There is a discrete Hurewicz theorem in dimension $1$. The same strategy in the proof {\rm{\cite[Theorem 4.1]{HB2014}}} applies in this case.
\begin{Theorem}\label{di1}
For any finite poset $X$, the map  
$$h^D: \pi^{D}_1(X, x_0)\rightarrow {H}^{\rm Cube}_1(X)$$
is surjective with ${\rm Ker}h^D=[\pi^{D}_1(X, x_0), \pi^{D}_1(X, x_0)]$.
\end{Theorem}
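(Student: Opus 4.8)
To prove Theorem~\ref{di1} I would follow the classical degree-one Hurewicz argument, in the discrete form used for graphs in \cite[Theorem~4.1]{HB2014}. We may assume $X$ is connected, since $\pi_1^{D}(X,x_0)$ and the corresponding summand of $H_1^{\rm Cube}(X)$ depend only on the connected component of $x_0$. Fix once and for all, for each vertex $v\in X$, a discrete path $\gamma_v$ from $x_0$ to $v$ with $\gamma_{x_0}$ constant. For a nondegenerate $1$-cube $e\colon Q_1\to X$ with $a=e(0)$ and $b=e(1)$, let $\lambda_e\in\pi_1^{D}(X,x_0)$ be the class of the loop $\gamma_a\cdot e\cdot\overline{\gamma_b}$, and let $\Phi\colon C_1^{\rm Cube}(X)\to\pi_1^{D}(X,x_0)_{\rm ab}$ be the homomorphism to the abelianization determined on generators by $\Phi(e)=[\lambda_e]$. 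Because $\lambda_e$ is nullhomotopic when $e$ is degenerate, $\Phi$ descends from $L_1^{\rm Cube}(X)$ to $C_1^{\rm Cube}(X)$.

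The first thing to prove is that $\Phi$ kills boundaries. For a $2$-cube $\sigma\colon Q_2\to X$, the chain $\partial_2^{\rm Cube}(\sigma)$ is, up to sign, the boundary circuit $w_\sigma$ of the square $\sigma$, namely $\sigma(0,0)\to\sigma(1,0)\to\sigma(1,1)\to\sigma(0,1)\to\sigma(0,0)$; regrouping the four summands of $\Phi(\partial_2^{\rm Cube}\sigma)$ (legal after abelianizing) and telescoping the connecting paths shows that $\Phi(\partial_2^{\rm Cube}\sigma)$ is the class of $\gamma_{\sigma(0,0)}\cdot w_\sigma\cdot\overline{\gamma_{\sigma(0,0)}}$. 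Now $w_\sigma$ is the image under the order-preserving map $\sigma$ of the boundary circuit of $Q_2$, and since $(0,0)$ is the minimum of $Q_2$, Example~\ref{trivial homotopy} gives $\pi_1^{D}(Q_2,(0,0))=0$, so $w_\sigma$ is nullhomotopic and $\Phi(\partial_2^{\rm Cube}\sigma)=0$. Thus $\Phi$ induces $\bar\Phi\colon H_1^{\rm Cube}(X)\to\pi_1^{D}(X,x_0)_{\rm ab}$. The second thing is the direct computation that, for any loop $f$ of radius $r$, one has $\Phi(\varphi(T_rf))=[f]$ in $\pi_1^{D}(X,x_0)_{\rm ab}$: modulo degenerate cubes, $\varphi(T_rf)$ is the $1$-chain of the edge-walk traced out by $f$, and applying $\Phi$ telescopes the connecting paths along that walk (the order in which the edges appear being immaterial after abelianizing). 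Hence $\bar\Phi\circ h^{D}$ is the canonical projection $q\colon\pi_1^{D}(X,x_0)\to\pi_1^{D}(X,x_0)_{\rm ab}$, which forces $\ker h^{D}\subseteq\ker q=[\pi_1^{D}(X,x_0),\pi_1^{D}(X,x_0)]$; the reverse inclusion is automatic because $H_1^{\rm Cube}(X)$ is abelian and $h^{D}$ is a homomorphism. This settles the assertion about the kernel.

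For surjectivity, given a $1$-cycle $z=\sum_e n_e\, e$ I would show it is the $1$-chain of an honest based loop, up to degeneracies. Writing $c(\cdot)$ for the $1$-chain of a discrete loop, and using that reversing a path negates its $1$-chain, the combination $\sum_e n_e\, c\!\left(\gamma_{a_e}\cdot e\cdot\overline{\gamma_{b_e}}\right)$ differs from $z$ only by the terms $\sum_v\bigl(\partial_1^{\rm Cube}z\bigr)_v\, c(\gamma_v)$ contributed by the connecting paths, and these vanish because $\partial_1^{\rm Cube}z=0$. Hence $z$ equals, modulo degenerate cubes, the $1$-chain of the loop $f'$ obtained as any ordered product (with multiplicities $n_e$) of the based loops $\gamma_{a_e}\cdot e\cdot\overline{\gamma_{b_e}}$, so $[z]=[\varphi(T_{r'}f')]=h^{D}([f'])$.

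The telescoping identities and the identification of $\partial_2^{\rm Cube}\sigma$ with the square circuit are routine. The part I expect to cost the most care is the compatibility between the orientation conventions for $1$-cubes forced by the partial order and the order-forgetting definition of $\varphi$: this is exactly what compels us to land $\Phi$ in the abelianization, and the same bookkeeping must be carried through consistently when realizing an arbitrary $1$-cycle $z$ as $\varphi(T_{r'}f')$ in the surjectivity step.
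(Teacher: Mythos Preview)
Your proposal is correct and follows essentially the same approach as the paper: the paper does not give its own argument but simply states that the strategy of \cite[Theorem~4.1]{HB2014} applies, and your construction of $\Phi$ via fixed connecting paths, the verification that $\Phi$ annihilates $\partial_2^{\rm Cube}$ using the contractibility of $Q_2$ (Example~\ref{trivial homotopy}), and the telescoping realization of an arbitrary $1$-cycle as $\varphi$ of a based loop is exactly that strategy transported to the poset setting. The only poset-specific wrinkle worth flagging is that path reversal $\overline{\gamma}$ is only literally order-preserving on $I_p$ when $p$ is even, so one should pad to even length before reversing; this is harmless but should be said once.
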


Let us recall the notion of classical Hurewicz map {\rm{\cite{AH2002}}}. Thinking of $\pi_n(X, x_0)$
as homotopy classes of maps $f: (L^n, \partial L^n)\rightarrow (X, x_0)$, the 
Hurewicz map $h: \pi_n(X, x_0)\rightarrow H_n^{\rm Sing}(X, x_0)$
is defined by $h([f])=f_*(\beta)$, where $\beta$ is a fixed generator of $H_n^{\rm Sing}(L^n, \partial L^n)=\mathbb{Z}$ and $f_*:H_n^{\rm Sing}(L^n, \partial L^n)\rightarrow H_n^{\rm Sing}(X, x_0)$ is induced by $f$.

\begin{Theorem}\label{Hurewicz coincide}
 For any $n\geq 0$, the following diagram commutes.
\end{Theorem}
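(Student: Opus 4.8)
The plan is to verify the square by a direct chain-level computation: commutativity means $\eta_*\circ\psi_*\circ h^D=h\circ\theta$ as homomorphisms into $H_n^{\rm Sing}(|\mathcal{K}(X)|,x_0)$ (which for $n\geq 1$ we identify with $H_n^{\rm Sing}(|\mathcal{K}(X)|)$; the case $n=0$ is trivial). Fix a map $f\colon(\mathbb{Z}^n,\partial\mathbb{Z}^n)\to(X,x_0)$ of radius $r$. By definition $h^D([f])=[\varphi(T_rf)]$, and unwinding $\varphi$ and $\psi$ one finds that $\psi(\varphi(T_rf))$ equals $|\mathcal{K}(T_rf)|_*$ applied to the simplicial $n$-chain
\[
\Sigma=\sum_{x\in I^n_{4r-1}}\ \sum_{\tau\in S_n}{\rm sign}(\tau)\,\bigl[\,x+p_\tau(0),\,x+p_\tau(1),\,\dots,\,x+p_\tau(n)\,\bigr]
\]
in $\mathcal{K}(I^n_{4r})$, where $p_\tau$ is the monotone lattice path in $Q_n$ associated with $\tau$. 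On the other side, by the definition of the classical Hurewicz map, $h(\theta([f]))=\bigl(|\mathcal{K}(f)|\circ\lambda_r^{-1}\bigr)_*(\beta)$, where $\beta$ is the fixed generator of $H_n^{\rm Sing}(L^n,\partial L^n)$.

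The crux is that $\Sigma$ represents a fundamental class. The order complex $\mathcal{K}(I^n_{4r})$ is the standard triangulation of the box $[0,4r]^n$ (the order complex of a product of $n$ fences): each unit cell is cut into $n!$ simplices indexed by $S_n$, and for the simplex indexed by $\tau$ the ordered vertex list $[x+p_\tau(0),\dots,x+p_\tau(n)]$ realizes the orientation ${\rm sign}(\tau)$ relative to the standard orientation of that cell. Hence $\sum_{\tau}{\rm sign}(\tau)[x+p_\tau(0),\dots,x+p_\tau(n)]$ is the coherently oriented sum of the $n!$ simplices of the cell $x$, and summing over $x$ the interior $(n-1)$-faces cancel, so $\Sigma$ is---up to a fixed global sign---the simplicial fundamental cycle of the pair $\bigl(\mathcal{K}(I^n_{4r}),\mathcal{K}(\partial I^n_{4r})\bigr)$. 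Since the canonical homeomorphism $|\mathcal{K}(I^n_{4r})|\to[0,4r]^n$ carries this triangulation to the standard one, and since $\eta_*$ is natural and sends simplicial fundamental classes to singular ones, the image $\eta_*[\Sigma]$ is the pushforward of a generator of $H_n^{\rm Sing}$ of that box relative to its boundary.

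What remains is to reconcile the two cubes in play. The map $\theta$ is built from the radius-$r$ partition of $L^n$ and the triangulation $\mathcal{K}(\gamma_r)$, whereas $\varphi(T_rf)$ lives over $I^n_{4r}$, which is the cube $L^n$ after an affine change of coordinates; because $f$ has radius $r$, $T_rf$ is the constant map $x_0$ on every unit cell disjoint from the sub-box on which $f$ is non-constant, so the corresponding simplices of $|\mathcal{K}(T_rf)|_*\Sigma$ are degenerate and vanish in $C_n^{\rm Simpl}(\mathcal{K}(X))$. What survives is $|\mathcal{K}(f)|_*$ applied to the fundamental triangulated chain of that sub-box, which---since an affine rescaling and translation of the domain cube does not change the induced class in $H_n^{\rm Sing}(|\mathcal{K}(X)|)$---is precisely $\bigl(|\mathcal{K}(f)|\circ\lambda_r^{-1}\bigr)_*(\beta)$. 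Therefore $\eta_*\psi_*h^D([f])=h(\theta([f]))$, and since $[f]$ was arbitrary the diagram commutes.

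The step I expect to be the real obstacle is the orientation and sign bookkeeping: one has to show that the permutation-sign weights built into $\psi$ assemble the $n!$ simplices in each cell, and the cells across the whole box, into a \emph{coherently} oriented triangulation---so that $\Sigma$ is genuinely a relative cycle and represents the fundamental class and not a multiple of it---and, more delicately, that the resulting sign matches the one implicit in the particular generator $\beta$ used to define $h$. Pinning down this sign on the nose (rather than up to $\pm 1$) is the subtle part and is probably best done by an induction on $n$ using the face maps $f_i^{\pm}$ and the boundary formula, in the spirit of the proof that $\psi$ is a chain map. The remaining ingredients---naturality of $\eta_*$ and its compatibility with fundamental classes, the vanishing of degenerate simplices, and invariance of singular homology classes under subdivision and affine change of coordinates of the source---are standard and need only be invoked carefully for the relative and based homology groups that appear.
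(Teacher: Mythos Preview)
Your approach is essentially the paper's: evaluate both legs of the square on $[f]$, recognise $\psi(\varphi(T_rf))$ as $\mathcal{K}(T_rf)_*$ applied to the chain
\[
\Sigma=\sum_{x\in I^n_{4r-1}}\sum_{\tau\in S_n}{\rm sign}(\tau)\,[\,x+p_\tau(0),\dots,x+p_\tau(n)\,],
\]
and identify $[\Sigma]$ with a generator $\alpha$ of $H_n^{\rm Simpl}\bigl(\mathcal{K}(I^n_{4r}),\mathcal{K}(\partial I^n_{4r})\bigr)$. The one place you diverge is the sign issue you flag as the real obstacle. The paper does not attempt to match $\alpha$ against a pre-existing $\beta$; it simply \emph{defines} $\beta={\lambda_{2r}}_*\circ|\mathcal{K}(P)|^{-1}_*\circ\eta_*(\alpha)$, where $P(x)=x+2r$ is the translation carrying $\gamma_{2r}$ onto $I^n_{4r}$, and then verifies $h\theta([f])=\eta_*\mathcal{K}(T_rf)_*\alpha$ by direct substitution. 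Since the classical Hurewicz map is only defined relative to a choice of generator of $H_n^{\rm Sing}(L^n,\partial L^n)$, this is legitimate and makes your anticipated orientation bookkeeping evaporate. The paper also relates $T_rf$ to $f$ via the map $P$ (so that $|\mathcal{K}(f)|=|\mathcal{K}(T_rf)|\circ|\mathcal{K}(P)|$ on the relevant cube) rather than by arguing that the simplices over the padded region are degenerate, but that difference is cosmetic.
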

\begin{equation*}
\xymatrix{  \pi_n^D(X, x_0)\ar[r]^{h^D} \ar[dd]_{\theta} &  H_n^{\rm Cube}(X)\ar[d]^{\psi}\\
            &H^{\rm Simpl}_n(\mathcal{K}(X))\ar[d]^{\eta_*}\\
\pi_n(X, x_0)\ar[r]^-{h}& H^{\rm Sing}_n(|\mathcal{K}(X)|).
}
\end{equation*}
\begin{proof}
Let $f:(\mathbb{Z}^n, \partial\mathbb{Z}^n)\rightarrow (X, x_0)$ be a map of radius $r$. By computing, we get
\begin{equation*}
\begin{split}
\eta_*\psi_* h^D([f])&=\eta_*\psi[\sum_{x\in I^{n}_{4r-1}}(T_rf)^x]\\
&=\eta_*\left[\sum_{x\in I^{n}_{4r-1}}\sum_{\tau\in S_n}{\rm sign}(\tau)\mathcal{K}\left(\{(T_rf)^x (p_{\tau}(i))~|~0\leq i\leq n\}\right)\right]\\
&=\eta_*\left[\sum_{x\in I^{n}_{4r-1}}\sum_{\tau\in S_n}{\rm sign}(\tau)\mathcal{K}\left(\{T_rf (x+p_{\tau}(i))~|~0\leq i\leq n\}\right)\right]\\
&=\eta_*\mathcal{K}(T_rf)_*\left[\sum_{x\in I^{n}_{4r-1}}\sum_{\tau\in S_n}{\rm sign}(\tau)\mathcal{K}\left(\{ x+p_{\tau}(i)~|~0\leq i\leq n\}\right)\right],
\end{split}
\end{equation*}
where $\left[\sum\limits_{x\in I^{n}_{4r-1}}\sum\limits_{\tau\in S_n}{\rm sign}(\tau)\mathcal{K}\left(\{ x+p_{\tau}(i)~|~0\leq i\leq n\}\right)\right]$ is a generator of $$H^{\rm Simpl}_n(\mathcal{K}(I^{n}_{4r}), \mathcal{K}(\partial I^{n}_{4r})),$$ denoted $\alpha$.

Consider an order-preserving map $P: \mathbb{Z}^n\rightarrow\mathbb{Z}^n$ given by
$$P(x)=x+2r.$$
Then we have $f(x)=T_rf(Px)$ for every $x\in\mathbb{Z}_{[-2r, 2r]}^n$. It follows that 
 $$|\mathcal{K}(f)|=|\mathcal{K}(T_rf)\circ \mathcal{K}(P)|$$ when restricted to $\mathbb{Z}_{[-2r, 2r]}^n$.
Denote by $\beta={\lambda_{2r}}_*|\mathcal{K}(P)|^{-1}_*\eta_*\alpha$ the fixed generator of
$H^{\rm Sing}_n(L^n, \partial L^n)$. Then
\begin{align*}
h\theta([f])&=(|\mathcal{K}(f)|\circ\lambda_{2r}^{-1})_*\eta_*(\beta)\\
&=|\mathcal{K}(T_rf)\circ \mathcal{K}(P)|_*\circ{(\lambda_{2r}^{-1})}_*({(\lambda_{2r})}_*|\mathcal{K}(P)|^{-1}_*\eta_*\alpha)\\
&=|\mathcal{K}(T_rf)|_*\eta_*\alpha\\
&=\eta_*\mathcal{K}(T_rf)_*\alpha.
\end{align*}

\end{proof}

\section*{Data availability}
This study has no associated data.
\section*{Declarations}
The authors declare that they have no conflict of interest.

\end{document}